\theoremstyle{plain}
\newtheorem{thm}{Theorem}[section]
\newtheorem{prop}[thm]{Proposition}
\newtheorem{lemma}[thm]{Lemma}
\newtheorem{cor}[thm]{Corollary}
\theoremstyle{definition}
\newtheorem{defn}[thm]{Definition}
\newtheorem{eg}[thm]{Example}
\newtheorem{rmk}[thm]{Remark}
\newtheorem{ntn}[thm]{Notation}
\numberwithin{equation}{section}
\def\1{\mathbbm 1}
\def\Q{\mathbb Q}
\def\Z{\mathbb Z}
\def\N{\mathbb N}
\def\F{\mathbb F}
\def\o{\otimes}
\def\lra{\longrightarrow}
\def\RHOM{\mathbf{R}\mathrm{HOM}}
\def\Id{\mathrm{Id}}
\def\mc{\mathcal}
\def\mf{\mathfrak}
\def\Ext{\mathrm{Ext}}
\newcommand{\udmod}{\!-\!\underline{\mathrm{mod}}}  
\newcommand{\dmod}{\!-\!\mathrm{mod}}
\newcommand{\dif}{\partial}   
\newcommand{\NH}{\mathrm{NH}} 
\newcommand{\Hom}{{\rm Hom}}
\newcommand{\HOM}{{\rm HOM}}
\def\dif{{\partial}}
\def\lra{{\longrightarrow}}
\def\dmod{{\mathrm{-mod}}}   
\def\Ext{{\mathrm{Ext}}}
\def\Id{\mathrm{Id}}
\def\mc{\mathcal}
\def\mf{\mathfrak}
\def\shuffle{\,\raise 1pt\hbox{$\scriptscriptstyle\cup{\mskip
               -4mu}\cup$}\,}
\newcommand{\refequal}[1]{\xy {\ar@{=}^{#1}
(-1,0)*{};(1,0)*{}};
\endxy}
\title{A Categorification of the Burau representation at prime roots of unity}
\author{You Qi, Joshua Sussan}
\date{December 30, 2013}
\begin{document}
%

\maketitle

\setcounter{tocdepth}{2}

\tableofcontents

\begin{abstract}
We construct a $p$-DG structure on an algebra Koszul dual to a zigzag algebra used by Khovanov and Seidel to construct a categorical braid group action.  We show there is a braid group action in this $p$-DG setting\let\thefootnote\relax\footnotetext{Key words: Braid group action; Burau representation; $p$-DG algebra; hopfological algebra.}. \let\thefootnote\relax\footnotetext{MSC (2010): 81R50, 16E20, 16E35.}
\end{abstract}

\section{Introduction}
Since Khovanov's ground breaking work \cite{KhJones}, significant progress has been made for categorification of link invariants as well as their representation-theoretical explanations \cite{Web,Web2} for a generic value of $ q $.
The Reshetikhin-Turaev and Turaev-Viro quantum three-manifold invariants require $q$ to be specialized to a root of unity.
The corresponding problems of categorically lifting quantum three-manifolds remains mostly all open.
With this in mind, Khovanov~\cite{Hopforoots} introduced the subject of \emph{hopfological algebra} and proposed a categorification program when $q$ equals a prime root of unity.  The key observation of Khovanov was that, over a field of characteristic $p>0$, the homotopy category of $p$-complexes, historically first considered by Mayer \cite{Mayer1,Mayer2}, categorifies the ring of integers in the cyclotomic field $\mathbb{Q}[\zeta_p]$, where $\zeta_p$ is a primitive $p$th root of unity.  Khovanov further suggested that, in order to find interesting such categorifications, one should look for algebras with a nilpotent derivation $ \partial $ of order $ p $.

As a first instance, Khovanov and Qi~\cite{KQ} showed that the nilHecke algebra carries a $p$-nilpotent derivation over a field of characteristic $p$.  Using the technical machinery developed by Qi ~\cite{QYHopf}, they showed that the derived category of compact modules over all nilHecke algebras equipped with this nilpotent derivation categorifies half of quantum $ \mathfrak{sl}_2 $ at a $p$th root of unity.  They also show how to equip KLR algebras with a nilpotent derivation which would conjecturally extend their result to a categorification of the upper half of simply-laced quantum groups at a root of unity of prime order. Another observation of ~\cite{KQ} is that Webster's algebras \cite{Web,Web2,Web3} also admit a family of nilpotent derivations, promising a way to categorically specialize the Witten-Reshetikhin-Turaev tangle invariant at a prime root of unity. The latter process in turn constitutes an important step towards a categorification of the Witten-Reshetikhin-Turaev three-manifold invariant.

In the current work, we give a rather modest justification of the above program by singling out a relatively easy example.  We consider the simplest non-trivial block of Webster's algebra which categorifies the second highest weight space of $V_1^{\otimes n}$, where $V_1$ is the defining representation of quantum $\mathfrak{sl}_2$. The main reason that we would like to present this particular case by itself is that, these rings, or in  a Koszul dual ``disguise,'' appear naturally in various contexts in symplectic topology, algebraic geometry and representation theory, and has attracted much attention in recent years (see, for example, \cite{KS, SeidelThomas, HKh, CauLiSu} and the references therein). We combine these algebras with $p$-nilpotent differentials, and thereby exhibit a (weak) categorification of the Burau representation of the Temperley-Lieb algebra and the braid group at a prime root of unity.

\paragraph{Outline.}We now give a brief summary of the contents of this paper.

We begin by defining various versions of the small quantum $ \mf{sl}_2$ along with tensor powers of the standard representation in Section \ref{sec-small-qgroup}.  The Burau representation is then given explicitly as the braid group action on the second highest weight space.

In Section \ref{generalities}, some general facts about $p$-DG algebras and their hopfological theory are reviewed.  Much of the theory is tailored for our particular example introduced in Section \ref{zigzagalgebra}: a zig-zag algebra with a family of $p$-differentials.

As a common theme in representation theory, (categorical) braid group actions naturally arise from compatible (categorical) quantum group actions. For this reason, we explain in Section \ref{zigzagalgebra} why only two particular differentials, which are conjugate to each other by an automorphism, are relevant for categorification purposes. The hopfological behavior of the other derivations seems more challenging, and we do not have a comparatively good understanding of them.

Finally, a categorification of the Burau representation at a prime root of unity is constructed in Section \ref{catburau} (see Theorems \ref{thm-TL-action} and \ref{thm-braidrelations}).  The main computational tool that enables the hopfological computations and the proof of the main results are the cofibrant replacements of the simple objects (Lemma \ref{lemma-ny-resolution}), which we address fondly as our ``New Year's resolution\footnote{We will abbreviate it as the ``NY resolution'' in the main text.},'' in celebration of the time we arXiv this paper, as well as  motivating ourselves to keep on with categorification at a prime root of unity. In the appendix we construct explicit maps between cofibrant replacements of simple objects.  This may be useful in the future if one wants to investigate the faithfulness of the categorified braid group action at primes roots of unity.

\paragraph{Acknowledgements.}
The authors would like to thank Mikhail Khovanov for valuable suggestions throughout the course of this project and his continued support. The first author thanks his friend Rumen Zarev for many helpful discussions.


\section{The small quantum group}\label{sec-small-qgroup}
\paragraph{The small quantum group.}
Let $ \zeta_{2l}$ be a primitive $2l$th root of unity.  The quantum group $ u_{\zeta_{2l}}(\mathfrak{sl}_2) $, which we will denote simply by $ u_{\zeta_{2l}} $, is generated by $ E, F, K^{\pm 1} $ subject to relations:
\begin{enumerate}
\item $ KK^{-1} = K^{-1}K=1$
\item $ K^{\pm 1}E = \zeta_{2l}^{\pm 2} EK^{\pm 1} $, \quad $ K^{\pm 1} F = \zeta_{2l}^{\mp 2} FK^{\pm 1} $
\item $ EF-FE = \frac{K-K^{-1}}{\zeta_{2l}-\zeta_{2l}^{-1}} $
\item $ E^l = F^l = 0 $
\end{enumerate}

The quantum group is a Hopf algebra whose comultiplication map $ \Delta \colon u_{\zeta_{2l}} \rightarrow u_{\zeta_{2l}} \otimes u_{\zeta_{2l}} $ is given on generators by
\begin{equation*}
\Delta(E) = E \otimes 1 + K \otimes E \quad\quad \Delta(F) = 1 \otimes F + F \otimes K^{-1} \quad\quad \Delta(K^{\pm 1}) = K^{\pm 1} \otimes K^{\pm 1}.
\end{equation*}

Let $ [n] = \frac{\zeta_{2l}^n - \zeta_{2l}^{-n}}{\zeta_{2l}-\zeta_{2l}^{-1}} $, $ E^{(n)} = \frac{E^n}{[n]!} $, and $ F^{(n)} = \frac{F^n}{[n]!} $.
The elements $ E^{(n)}$, $F^{(n)}$, and $ K^n$ generate an algebra over the ring of cyclotomic integers $ \mathcal{O}_{2l} = \mathbb{Z}[\zeta_{2l}] $.
Denote this integral form by $ u_{\zeta_{2l}} $.

Now let $ l=p $ be prime.  Introduce the auxiliary ring $ \mathbb{O}_p = \mathbb{Z}[q]/(\Psi_p(q^2)) $, where $ \Psi_p(q) $ is the $p$-th cyclotomic polynomial.
We can define in a similar fashion an integral form $ u_{\mathbb{O}_p} $ for the small quantum group $\mf{sl_2}$ over $ \mathbb{O}_p $.  For more details see \cite[Section 3.3]{KQ}.
Let the lower half of $ u_{\mathbb{O}_p} $ be the subalgebra generated by the $ F^{(n)} $ and denote it by $ u_{\mathbb{O}_p}^- $

Let $ {V}_n $ be the unique (up to isomorphism) irreducible module (type I) for $ u_{\mathbb{O}_p} $ of rank $ n+1$.  It has a basis $ \lbrace v_0, v_1, \ldots, v_{n} \rbrace $ such that
\begin{equation}
\label{irreddef}
K^{\pm 1} v_i= q^{\pm (n-2i)} v_i\quad\quad Fv_i =[i+1] v_{i+1}\quad\quad E v_i = [n-i+1] v_{i-1}.
\end{equation}

On $ V_1^{\otimes n} $ there is an action of $ u_{\mathbb{O}_p} $ given via the comultiplication map $ \Delta$.  There is also a commuting action of the braid group that factors through the Hecke algebra.  We consider the second highest weight space which we denote by
$ V_1^{\otimes n}[n-2] $.  This is spanned by vectors $ v_{i_1} \otimes \cdots \otimes v_{i_n} $ where $ i_r \in \lbrace 0, 1 \rbrace $ and exactly one $ i_r$ is $ 1 $.
It is convenient to use another basis of $ V_1^{\otimes n}[n-2] $ which is spanned by vectors $ l_r$ where
\begin{equation*}
l_r =
\begin{cases}
v_{i_1} \otimes \cdots \otimes v_{i_{r-1}} \otimes (v_1 \otimes v_0-qv_0\o v_1) \otimes v_{i_{r+2}} \otimes \cdots \otimes v_{i_n}  & \text{ if } i_{r}= 1  \text{ and } r \neq n \\
v_{i_1} \otimes \cdots \otimes v_{i_n} & \text{ if } i_r=1 \text{ and } r=n.
\end{cases}
\end{equation*}

\paragraph{The braid group.}
The braid group $B_n$ is generated by elements $ t_i $ for $i =1, \ldots, n-1 $ subject to the relations that
\begin{enumerate}
\item $t_i t_j= t_j t_i$ if $|i-j|>1$,
\item $ t_i t_{i +1} t_i = t_{i+1} t_i t_{i+1} $ for $ i=1,\ldots,n-2$.
\end{enumerate}
For $i=1,\ldots, n-1$, it will be convenient to introduce the inverse of $t_i$ and denote it by $t_i^\prime$.

Define an operator $ \tilde{t} $ on $ V_1^{\otimes 2} $ by
\begin{align*}
\tilde{t}(v_0 \otimes v_0) &= v_0 \otimes v_0 \\
\tilde{t}(v_1 \otimes v_1) &= v_1 \otimes v_1 \\
\tilde{t}(v_0 \otimes v_1) &= q v_1 \otimes v_0 +(1-q^2) v_0 \otimes v_1 \\
\tilde{t}(v_1 \otimes v_0) &= q v_0 \otimes v_1.
\end{align*}
Then there is an action of the braid group generator $ {t}_i \colon V_1^{\otimes n} \rightarrow V_1^{\otimes n} $ by
\begin{equation*}
t_i = \Id^{\otimes i-1} \otimes \tilde{t} \otimes \Id^{\otimes n-i-1}.
\end{equation*}
Define an operator $ \tilde{t}' $ on $ V_1^{\otimes 2} $ by
\begin{align*}
\tilde{t}'(v_0 \otimes v_0) &= v_0 \otimes v_0 \\
\tilde{t}'(v_1 \otimes v_1) &= v_1 \otimes v_1 \\
\tilde{t}'(v_0 \otimes v_1) &= q^{-1} v_1 \otimes v_0 \\
\tilde{t}'(v_1 \otimes v_0) &=  (1-q^{-2})v_1 \otimes v_0 + q^{-1} v_0 \otimes v_1.
\end{align*}
Then there is an action of the braid group generator $ {t}_i' \colon V_1^{\otimes n} \rightarrow V_1^{\otimes n} $ by
\begin{equation*}
t_i' = \Id^{\otimes i-1} \otimes \tilde{t}' \otimes \Id^{\otimes n-i-1}.
\end{equation*}
It is easy to see that these operators preserve each weight space of $V_1^{\otimes n} $ and it is straightforward to check that they do indeed satisfy the braid relations.
We call the action of $ B_n $ on $ V_1^{\otimes n}[n-2] $ the Burau representation.

In terms of the basis $ \{l_1, \ldots, l_n \} $ the action of the braid group is given by
\begin{equation}
\label{twistingdualcan1}
t_i(l_j) =
\begin{cases}
-q^2 l_i & \text{ if } i=j \\
q l_i + l_j & \text{ if } |i-j|=1 \\
l_j & \text{ if } |i-j|>1
\end{cases}
\end{equation}
\begin{equation}
\label{twistingdualcan2}
t_i'(l_j) =
\begin{cases}
-q^{-2} l_i & \text{ if } i=j \\
q^{-1} l_i + l_j & \text{ if } |i-j|=1 \\
l_j & \text{ if } |i-j|>1.
\end{cases}
\end{equation}


\section{Generalities on \texorpdfstring{$p$}{p}-DG algebras}
\label{generalities}
\paragraph{Elements.} We gather in this section some necessary facts about $p$-DG theory that we shall use.  Fix once and for all a base field $\Bbbk$ of characteristic $p > 0$. Unadorned tensor product $\o$ will stand for the tensor product over $\Bbbk$. Throughout we use the French convention $\N:=\{0,1,2,\dots\}$. Set $H:=\Bbbk[\dif]/(\dif^p)$ which is a graded Hopf algebra with $\deg(\dif)=2$.

A $p$-DG algebra $(A,\dif_A)$ over a ground field $\Bbbk$ of positive characteristic $p$ consists of a $\Z$-graded algebra $A=\oplus_{i\in \Z}A^i$ and a degree-two endomorphism $\dif_A$ on $A$, which is $p$-nilpotent and satisfies the Leibnitz rule:
\[
\dif_A^p\equiv 0,  \ \ \ \ \dif_A(ab)=\dif_A(a)b+a\dif_A(b),
\]
for any $a,b\in A$. A left (resp. right) $p$-DG module $(M,\dif_M)$ over $(A,\dif_A)$ is a left (resp. right) $A$-module equipped with a degree two, $p$-nilpotent endomorphism $\dif_M$, $\dif_M^p\equiv 0$, which is compatible with the differential action on $A$:
\[
\dif_M(ax)=\dif_A(a)x+a\dif_M(x) \ \ \ \ (\textrm{resp.}~\dif_M(xa)=\dif_M(x)a+x\dif_A(a)).
\]
For a $p$-DG module $ (M,\dif_M)$ we will also consider the shifted module $ (M\{l\},\dif_M) $ where the degree $i$ component $ M\{l\}^i$ is equal to $M^{i-l}$.
In what follows, we will be exclusively considering $p$-DG algebras of the following type.
\begin{defn}\label{defn-positive-p-DGA}A $p$-DG algebra $A$ is called \emph{positive}, if it satisfies the following three conditions.
\begin{itemize}
\item[(i)] The algebra $A$ is non-negatively graded, $A\cong \oplus_{i\in \N}A^i$, and is finite dimensional in each fixed degree.
\item[(ii)] The homogeneous degree-zero part $A^0$ is semi-simple.
\item[(iii)] The differential $\dif_A$ acts trivially on $A^0$.
\end{itemize}
\end{defn}

The subscripts in the differentials on the $p$-DG algebras and modules will be dropped when no confusion can be caused. We adapt some basic definitions from \cite[Section 2]{KQ} in the situation when $A$ is positive, and the reader is referred loc.~cit.~for a summary of the basic definitions and techniques of the homological theory of $p$-DG modules. Some further information on the subject in the context of \emph{hopfological algebra} can be found in \cite{QYHopf}. In particular, we will use the notations $A_\dif\dmod$ (resp.~$\mc{C}(A)$, $\mc{D}(A)$) to stand for the abelian (resp.~homotopy, derived) category of $p$-DG modules over $A$. Here $A_\dif$ denotes the \emph{smash product algebra}\footnote{It is usually denoted by $A \# H$.}, which is isomorphic, as a vector space, to $A\o H$, subject to the relations that $A\cong A\o 1\subset A\o H$, $H\cong  1\o H \subset A\o H$ as subalgebras, and the rule for commuting elements
\[
(1\o \dif) \cdot (a\o 1 )=(a\o 1) \cdot(1\o \dif) + \dif(a)\o 1,
\]
where $a$ is an arbitrary element of $A$. The categories $\mc{C}(A)$, $\mc{D}(A)$ are triangulated, equipped with the \emph{shift} endo-functors which we denote by $[1]$. Recall that for any $p$-DG module $M$, the shift functor $[1]$ of $\mc{C}(A)$ or $\mc{D}(A)$ acts on $M$ by $M[1]\cong M\o \widetilde{V}_{p-2}\{-p\}$, where $\widetilde{V}_{p-2}\{-p\}$ is the $(p-1)$-dimensional $p$-complex
\[
0\lra \underbrace{\Bbbk = \Bbbk = \cdots =\Bbbk}_{p-1~\textrm{copies}} \lra 0.
\]
Here the left most $\Bbbk$ sits in $q$-degree $-2p+2$ while the right most $\Bbbk$ sits in $q$-degree $-2$. The quasi-inverse of $[1]$, denoted $[-1]$, is given by the tensor product with the $p$-complex $\widetilde{V}_{p-2}\{p\}$. An easy exercise shows that shifting twice in the triangulated category is equivalent to a grading shift by $-2p$, i.e., there is an isomorphism
\begin{equation}\label{eqn-hom-shift-two-equals-grading-shift}
M[2]\cong M\{-2p\}
\end{equation}
which is functorial in $M$. Homological grading shifts $[\pm 1]$ give rise to a categorical $\mathbb{Z}$-action on the triangulated categories $\mc{C}(A)$ and $\mc{D}(A)$ by fixing, once and for all, a quasi-isomorphism
\begin{equation}\label{eqn-canonical-iota}
\iota \colon \widetilde{V}_0\lra \widetilde{V}_{p-2}\o \widetilde{V}_{p-2}, \quad \iota(\tilde{u}_0) =\sum_{i=0}^{p-2}(-1)^{i} \tilde{v}_{i}\o \tilde{v}_{p-2-i}.
\end{equation}
Here $\tilde{u}_0$ and $\tilde{v}_i$ denote some fixed balanced basis vectors for $\widetilde{V}_0$ and $\widetilde{V}_{p-2}$, such that $\dif(\tilde{u}_0)=0$, $\dif(\tilde{v}_i)=\tilde{v}_{i+1}$ ($0\leq i \leq p-3$) and $\dif(\tilde{v}_{p-2})=0$.

As a matter of convention, given two graded modules $M$, $N$ over a graded algebra $A$, we set $\Hom_{A}(M,N)$ to be the space of degree preserving $A$-module maps between $M$ and $N$. We define
$$\HOM_{A}(M,N):=\oplus_{l\in \Z}\Hom_{A}(M,N\{l\}),$$
where $N\{l\}$ stands for the same underlying $A$-module $N$ with gradings shifted up by $l$.

\begin{defn}\label{def-nice-p-dg-module}Let $A$ be a $p$-DG algebra, and $M$ be a $p$-DG module over $A$.
\begin{itemize}
\item[(i)] The module $M$ is called \emph{cofibrant} if for any surjective quasi-isomorphism of $p$-DG modules $N_1\twoheadrightarrow N_2$, the induced map
\[
\HOM_A(M,N_1)\lra \HOM_A(M,N_2)
\]
of $p$-complexes is a homotopy equivalence.
\item[(ii)] When $A$ is positive, we say $M$ is a \emph{finite cell module} if it has a finite exhaustive increasing filtration $F^\bullet$ by $p$-DG submodules such that the subquotients
\[
F^\bullet/F^{\bullet-1}\cong P_i\{l_i\}
\]
for some $i=1,\dots ,n$ and $l_i\in \Z$ where $P_i$ is a projective $A$-module.
\end{itemize}
\end{defn}

Cofibrant modules are ``nice'' in the sense that we have a good control of the morphism spaces between them in the derived category $\mc{D}(A)$. Namely, there is, for any cofibrant module $M$, a functorial-in-$N$ isomorphism of $\Bbbk$-vector spaces
\[
\Hom_{\mc{C}(A)}(M,N)\cong \Hom_{\mc{D}(A)}(M,N),
\]
which is induced from the localization functor $\mc{C}(A)\lra \mc{D}(A)$. It is an easy exercise to show that finite cell modules are cofibrant. Furthermore, it is clear that if $V$ is a finite dimensional $p$-complex $V$, then $A\o V$ is a finite cell module. We regard finite cell modules in the $p$-DG setting as an analogue of the ``one-sided twisted complexes'' in the usual DG theory \cite{BondalKapranov}.

\begin{eg}\label{eg-finite-cofibrant-mod}Let $A$ be a $p$-DG algebra. Assume that $a_0$ is an element of $A$ with $\dif_A^{p-1}(a_0)=0$, and $a_0\not\in\mathrm{Im}{\dif_A}$. Here we construct an example of a finite cell module that is not of the form $A\o V$.

Consider a free $A$-module on two generators $M:=Av_0\oplus Av_1$, where $\mathrm{deg}(v_0)-\mathrm{deg}(v_1)=2-\mathrm{deg}(a_0)$. Define a differential $\dif_M$ as follows. On generators, it acts by
\[
\dif_M(v_0)=0, \ \ \ \ \dif_M(v_1)=a_0v_0.
\]
It is extended to all of $M$ by the Leibnitz rule. We leave it as an exercise for the reader to check that $\dif_M^p\equiv 0$, and $M$ is not isomorphic to the tensor product of $A$ by a two-dimensional $p$-complex.
\end{eg}

We also recall the definition of the $\RHOM$-complex (see~\cite[Definition 8.11]{QYHopf}) in the $p$-DG setting.

\begin{defn}\label{def-RHOM}
Let $A$ be a $p$-DG algebra, and $M$, $N$ be two $p$-DG modules over $A$. The $\RHOM$-complex between $M$ and $N$, denoted $\RHOM_A(M,N)$, is a $p$-complex of the form
\[
\RHOM_A(M,N):=\HOM_A(\mathbf{p}M,N),
\]
where $\mathbf{p}M$ is a cofibrant $p$-DG module that surjects quasi-isomorphically onto $M$.
\end{defn}
It is easy to see that the isomorphism class of the $p$-complex $\RHOM_A(M,N)$, in the homotopy category of $p$-complexes, is independent of the choices of the cofibrant replacement $\mathbf{p}M$ for $M$ (c.~f.~the discussion about resolutions below). The morphism space $\Hom_{\mc{D}(A)}(M,N)$ between two $p$-DG modules $M,N\in \mc{D}(A)$ is canonically isomorphic to the ``stable invariants'' of the $p$-complex $\RHOM_{A}(M,N)$
\begin{equation}\label{eqn-morphism-space-as-invariants}
\Hom_{\mc{D}(A)}(M,N) \cong \Hom_{H\udmod}(\widetilde{V}_0, \RHOM_{A}(M,N)),
\end{equation}
where $\widetilde{V}_0$ stands for the one-dimensional $p$-complex $\Bbbk$ sitting in degree zero. More generally, attached to any two objects in a triangulated category one has the ``$\Ext$-space'' as follows.

\begin{defn}\label{def-ext-in-DA} Given any $p$-DG modules $M,N\in \mc{D}(A)$ and $i\in \Z$, we set the \emph{$i$th $p$-DG $\Ext$-space}, or simply the \emph{$\Ext$-space} between them, to be
\[
\Ext_{\mc{D}(A)}^i (M,N):=\Hom_{\mc{D}(A)}(M,N[i]),
\]
and we define \emph{the total $\Ext$-space} as
\[
\Ext^\bullet_{\mc{D}(A)}(M,N):=\bigoplus_{i\in \Z}\Ext^i_{\mc{D}(A)}(M,N).
\]
\end{defn}
One can also make these definitions in the homotopy category $\mc{C}(A)$ but we will not need them. It follows from \eqref{eqn-morphism-space-as-invariants} that there is a bi-functorial isomorphism
\begin{equation}\label{eqn-ext-space-as-invariants}
\Ext^\bullet_{\mc{D}(A)}(M,N)\cong \bigoplus_{i\in \Z}\Hom_{H\udmod}(\widetilde{V}_0,\RHOM_{A}(M,N)[i]).
\end{equation}

\begin{rmk}\label{rmk-why-not-EXT}
Despite the fact that one recovers an ordinary triangulated category $\mc{D}(A)$ by considering only the $\Ext$-morphism spaces, doing so will result in loss of information on the graded-$H$-module enhancement structure. For a simple instance, let $A=\Bbbk$, so that $\mc{D}(A)\cong H\udmod$. Consider the modules $M\cong \widetilde{V}_0$, $N\cong\widetilde{V}_0\{2\}$, so that
\[
\RHOM_\Bbbk(M,N)\cong \widetilde{V}_0\{2\}, \quad \quad \quad \RHOM_{\Bbbk}(N,M)\cong \widetilde{V}_0\{-2\}.
\]
Here, we have used that $M$, $N$ are always cofibrant as a $p$-DG module over $\Bbbk$, so that
$$\RHOM_{\Bbbk}(M,N)\cong \HOM_{\Bbbk}(M,N)\cong M^*\otimes_\Bbbk N.$$
It follows from \eqref{eqn-ext-space-as-invariants} that, on the one hand,
\[
\Ext^{i}_{\mc{D}(\Bbbk)}(M,N) \cong
\left\{
\begin{array}{cc}
\Bbbk & i=1,\\
0 & i \neq 1;
\end{array}
\right.
\]
while on the other hand, for degree reasons, $\Ext_{\mc{D}(\Bbbk)}^\bullet(N,M)\cong 0$, breaking in the apparent symmetry from grading shifts.

By equation \eqref{eqn-ext-space-as-invariants}, the $\RHOM$-space completely determines the $\Ext$-space. Henceforth, we will mostly just consider the former.
\end{rmk}

We next recall the notion of compact modules, which takes place in the derived category.

\begin{defn} \label{def-compact-mod} Let $A$ be a $p$-DG algebra. A $p$-DG module $M$ over $A$ is called \emph{compact} (in the derived category $\mc{D}(A)$) if and only if, for any family of $p$-DG modules $N_i$ where $i$ takes value in some index set $I$, the natural map
\[
\oplus_{i\in I}\Hom_{\mc{D}(A)}(M,N_i)\lra \Hom_{\mc{D}(A)}(M,\oplus_{i\in I} N_i)
\]
is an isomorphism of $\Bbbk$-vector spaces.
\end{defn}

The strictly full subcategory of $\mc{D}(A)$ consisting of compact modules will be denoted by $\mc{D}^c(A)$.  It is triangulated and will be referred to as the \emph{compact derived category}.

For positive $p$-DG algebras, we have a close relationship between finite cell modules and compact modules. Denote by $\mc{F}(A)$ the collection of all finite cell modules over $A$, and let $\overline{\mc{F}}(A)$ be the closure of $\mc{F}(A)$ inside $\mc{D}(A)$ under isomorphisms. Notice that the natural inclusion functor $\overline{\mc{F}}(A)\subset \mc{D}(A)$ factors through the compact derived category $\mc{D}^c(A)$.

\begin{prop}\label{prop-characterization-compactness}The natural inclusion functor
$$\overline{\mc{F}}(A)\subset \mc{D}^c(A)$$
is an equivalence of triangulated categories. \hfill$\square$
\end{prop}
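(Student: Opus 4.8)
I want to show the inclusion $\overline{\mc{F}}(A) \subset \mc{D}^c(A)$ is an equivalence. The strategy has two halves: first, that every finite cell module is compact (so the functor lands where claimed and is fully faithful onto its image), and second, that every compact module is isomorphic in $\mc{D}(A)$ to a finite cell module (essential surjectivity). Full faithfulness is essentially free: the inclusion $\overline{\mc{F}}(A) \hookrightarrow \mc{D}(A)$ is by construction a full embedding on objects closed under isomorphism, and $\mc{D}^c(A)$ is a full subcategory of $\mc{D}(A)$, so once we know finite cell modules are compact there is nothing more to check for fullness and faithfulness. So the real content is (a) $\overline{\mc{F}}(A) \subseteq \mc{D}^c(A)$ and (b) $\mc{D}^c(A) \subseteq \overline{\mc{F}}(A)$.

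\medskip\noindent\textbf{Step 1: finite cell modules are compact.} Since $A$ is positive, each projective $A$-module $P_i$ is a direct summand of $A$, hence $P_i\{l\}$ is a retract of $A\{l\}$; and $A$ itself is compact in $\mc{D}(A)$ because $\Hom_{\mc{D}(A)}(A, N)$ computes the degree-zero invariants of $N$ (more precisely the stable cohomology of $\RHOM_A(A,N) \cong N$), which commutes with arbitrary direct sums. Compact objects are closed under retracts, so each $P_i\{l\}$ is compact. A finite cell module is built from finitely many such $P_i\{l_i\}$ by finitely many extensions, i.e.\ it sits in a finite tower of distinguished triangles with compact subquotients; since $\mc{D}^c(A)$ is a triangulated subcategory (closed under cones and shifts), a finite cell module is compact. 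Passing to the isomorphism-closure $\overline{\mc{F}}(A)$ keeps us inside $\mc{D}^c(A)$.

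\medskip\noindent\textbf{Step 2: $\overline{\mc{F}}(A)$ generates $\mc{D}(A)$.} Here I want to invoke the standard machinery (Neeman/Keller-style) for compactly generated triangulated categories: if $\mc{S}$ is a set of compact objects that generates $\mc{D}(A)$ (in the sense that an object $N$ with $\Hom_{\mc{D}(A)}(S[i], N) = 0$ for all $S \in \mc{S}$, $i \in \Z$, must vanish), then the smallest thick (= triangulated, idempotent-complete) subcategory containing $\mc{S}$ is exactly $\mc{D}^c(A)$. For us the natural candidate generating set is $\{P_i\{l\}\}$, or even just $\{A\{l\}\}_{l\in\Z}$: a $p$-DG module $N$ with no maps from any shift/grading-shift of $A$ in the derived category has vanishing stable invariants in every internal degree, which forces $N \cong 0$ in $\mc{D}(A)$ by the general theory of hopfological algebra (cf.\ \cite{QYHopf}). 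This identifies $\mc{D}^c(A)$ with the thick closure of $\{A\{l\}\}$.

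\medskip\noindent\textbf{Step 3: the thick closure of $\{A\{l\}\}$ is already $\overline{\mc{F}}(A)$.} This is the crux, and the main obstacle. I must show $\overline{\mc{F}}(A)$ is itself a thick subcategory of $\mc{D}(A)$ --- it obviously contains all $A\{l\}$ and is closed under grading shifts and homological shifts (using $A\{l\}\otimes V$ for finite-dimensional $p$-complexes $V$, and $M[1] \cong M \otimes \widetilde V_{p-2}\{-p\}$). Closure under cones: given a map $f\colon M \to M'$ between finite cell modules, I would replace it by an honest (cofibrant, or even cellular) representative and realize the cone as a finite cell module by splicing the cell filtrations of $M[1]$ and $M'$ --- this is where positivity of $A$ does the work, letting one lift maps and arrange filtrations compatibly, much as in the theory of one-sided twisted complexes. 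Closure under retracts (idempotent completeness) is the subtler point: given an idempotent $e$ on a finite cell module $M$ in $\mc{D}(A)$, one must show its image is again (isomorphic to) a finite cell module; I expect to reduce this to an Eilenberg-swindle-free argument using that over a positive $p$-DG algebra the relevant $\Hom$-spaces in $\mc{D}(A)$ between finite cell modules are finite-dimensional in each degree, so idempotents can be split within the category --- alternatively, one shows directly that $\overline{\mc{F}}(A)$ is idempotent-complete because $\mc{D}(A)$ is and a retract of a compact object built from $A\{l\}$'s is again of that form. Combining Steps 1--3: $\overline{\mc{F}}(A) \subseteq \mc{D}^c(A) = \langle A\{l\} : l \in \Z\rangle_{\mathrm{thick}} \subseteq \overline{\mc{F}}(A)$, giving the equivalence. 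The hard part, as flagged, is verifying thickness of $\overline{\mc{F}}(A)$ --- concretely, that cones and retracts of finite cell modules stay finite cell modules --- and this is exactly where one leans on the positivity hypothesis and the finiteness built into Definition~\ref{defn-positive-p-DGA}.
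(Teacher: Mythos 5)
The skeleton (Neeman's theorem plus ``show $\overline{\mc{F}}(A)$ is thick'') is a reasonable strategy, but your Step 3 leaves the actual mathematical content of the proposition unproved. Once you know $\mc{D}(A)$ is compactly generated by the $A\{l\}$'s, Neeman gives $\mc{D}^c(A)=$ the \emph{thick} closure of these generators, i.e.\ the closure under cones, shifts \emph{and retracts}. Cone- and shift-closure of $\overline{\mc{F}}(A)$ is indeed routine (a morphism between finite cell modules can be represented by a genuine module map since finite cell modules are cofibrant, and the $p$-DG cone of such a map is again filtered by shifted projectives). But retract-closure is exactly the assertion that every compact object is quasi-isomorphic to an honest finite cell module rather than merely a direct summand of one, and that is the whole point of the proposition. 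Your two proposed justifications do not close this: the remark that a retract of a compact object built from the $A\{l\}$'s ``is again of that form'' is a restatement of what must be shown, and finite-dimensionality of the $\Hom$-spaces between finite cell modules only tells you idempotents split in an idempotent completion --- it does not by itself place the resulting summand back inside $\overline{\mc{F}}(A)$ (triangulated subcategories with finite-dimensional $\Hom$-spaces need not be idempotent complete). So as written the chain $\overline{\mc{F}}(A)\subseteq\mc{D}^c(A)=\langle A\{l\}\rangle_{\mathrm{thick}}\subseteq\overline{\mc{F}}(A)$ has an unproven last inclusion.

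For comparison, the paper does not argue via abstract thickness at all: it cites \cite[Theorem 2.17]{EQ1}, where the inclusion $\mc{D}^c(A)\subseteq\overline{\mc{F}}(A)$ is obtained constructively. There one uses positivity (non-negative grading, $A^0$ semisimple with trivial differential) to build, for an arbitrary object, a cell-module replacement whose attaching maps have strictly positive degree (a ``minimal'' model), and then compactness forces this cell structure to be finite; in particular idempotent completeness of $\overline{\mc{F}}(A)$ comes out as a consequence rather than being an input. If you want to salvage your route, you would have to supply precisely such a positivity-based argument for splitting idempotents (or for finiteness of minimal resolutions) --- at which point you have essentially reproduced the proof of \cite{EQ1} and the Neeman step becomes unnecessary.
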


In other words, over a positive $p$-DG algebra, any compact module is quasi-isomorphic to a finite cell module. The proof of the Proposition can be found in \cite[Theorem 2.17]{EQ1}. Applying the usual Grothendieck group construction to the triangulated category $\mc{D}^c(A)$, we obtain the $p$-DG Grothendieck group of a $p$-DG algebra $A$, which we denote by $K_0(A)$. It has a natural module structure over $K_0(H\udmod)\cong \mathbb{O}_p$, arising from the categorical action of $H\udmod$ on $\mc{D}^c(A)$.

\paragraph{Resolutions.} Let $M$ be a $p$-DG module. By a \emph{$p$-DG resolution} of $M$ we mean a quasi-isomorphism of $p$-DG modules $P\lra M$ such that $P$ is cofibrant. It is known that resolutions for $p$-DG modules always exist \cite[Theorem 6.6]{QYHopf}, usually not uniquely. However, it is not true in general that any $p$-DG module has a resolution by a finite cell module. Proposition \ref{prop-characterization-compactness} says that, over a positive $p$-DG algebra, this happens precisely when the module is compact in the derived category.

\begin{ntn}\label{ntn-finite-cell-mod}In what follows, we introduce a convenient way to rewrite finite cell modules. Let $M$ be a $p$-DG module with a finite increasing filtration $F^\bullet$ ($\bullet\in \Z$), with $F^i/F^{i-1}:= P_i$. Then we will write this filtered module schematically as a direct sum of its associated graded, and put an arrow from $P_i$ to $P_j$ decorated by a map $\phi$ if $\dif_M(p_i)$ projects onto $\phi(p_i)\in P_j$ (necessarily $j < i$), where $p_i$ is a lift of an element of $P_i$ inside $M$. For instance, assuming $n\geq m$ are integers, the following diagram stands for a filtered $p$-DG module
\[
\xymatrix{0\ar[r] & P_n \ar[r]^-{\phi_n} & P_{n-1}\ar[r]^-{\phi_{n-1}} \ar[r] & \cdots\ar[r]^-{\phi_{m+2}} & P_{m+1} \ar[r]^-{\phi_{m+1}}& P_m \ar[r]& 0}
\]
with the filtration implicitly understood as $F^{i}=0$ if $i\leq m-1$, $F^i=\sum_{k=m}^iP_k$ if $m\leq i\leq n$, while $F^i$ is the entire module if $i\geq n$. On each $F^i$, the differential structure is defined inductively by the composition
\[
\dif_{F^i}: P_j \lra P_j+P_{j-1}\subset F^i, \ \ \ \ p_j\mapsto (\dif_{P_j}(p_j), \phi_j(p_j)),
\]
for all $j \in \{m, m+1,\dots, i\}$.
\end{ntn}

Alternatively, one can regard the original filtration on $M$ as obtained from collapsing the bigrading of a bigraded $A$-module $M\cong \bigoplus_{i,j\in \Z}P_j^i$ into a single grading, which will always be referred to as the \emph{$q$-grading}, and take $F^j:=\sum_{k\geq j, i\in \Z}P^i_k$. Note that $\dif_M$ does not necessarily preserve the bigrading.

For instance, in this notation, the filtered $p$-DG module in Example \ref{eg-finite-cofibrant-mod} will be represented in this notation by
\[
0\lra A \stackrel{\cdot a_0}{\lra} A \lra 0.
\]

For another example, given any $p$-DG module $M$, its homological shift $M[1]$ will be depicted by the filtered module
\[
0\lra \underbrace{M = M = \cdots =M}_{p-1~\textrm{copies}} \lra 0.
\]
More generally, if $V_i$ denotes the $(i+1)$-dimensional indecomposable $p$-complex, then $M\o V_i$ will be written as
\[
0\lra \underbrace{M = M = \cdots =M}_{i+1~\textrm{copies}} \lra 0.
\]

Using this notation, we give a way of describing the (co)cone construction, in analogue with the usual DG case.

\begin{lemma}\label{lemma-cone-construction}
Let $\phi:M\lra N$ be a map of $p$-DG modules over $A$. The cone of the morphism $\phi$ is isomorphic to the filtered $p$-DG module
\[
\xymatrix{0 \ar[r] & M  \ar@{=}[r] & \cdots \ar@{=}[r] & M\ar[r]^{-\phi}& N \ar[r]& 0,}
\]
where $N$ sits in $q$-degree zero, and $M$ is repeated $p-1$ times. The cocone of $\phi$ is isomorphic to the filtered module
\[
\xymatrix{0 \ar[r] & M  \ar[r]^{-\phi} & N \ar@{=}[r] & \cdots \ar@{=}[r] & N \ar[r]& 0,}
\]
where $M$ sits in $q$-degree zero, and $N$ is repeated $p-1$ times.
\end{lemma}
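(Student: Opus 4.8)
The plan is to exhibit the asserted filtered $p$-DG module explicitly and construct mutually inverse quasi-isomorphisms to the usual cone/cocone, all the while tracking the differential. First recall that the cone of $\phi\colon M\lra N$ is, by definition in the triangulated category $\mc{C}(A)$, the module $\mathrm{Cone}(\phi)\cong M[1]\oplus N$ with the differential twisted by $\phi$; concretely, using the model $M[1]\cong M\o \widetilde{V}_{p-2}\{-p\}$ from the generalities section, $\mathrm{Cone}(\phi)$ has underlying module $\big(M\o \widetilde{V}_{p-2}\{-p\}\big)\oplus N$, with differential sending $m\o \tilde v_i\mapsto \dif_M(m)\o \tilde v_i + m\o \tilde v_{i+1}$ for $0\le i\le p-3$, and $m\o \tilde v_{p-2}\mapsto \dif_M(m)\o\tilde v_{p-2} - \phi(m)\in N$ (the sign being the standard cone sign). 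I would first translate this description into Notation \ref{ntn-finite-cell-mod}: the subquotients are $p-1$ copies of $M$ (coming from the $p-1$ basis vectors $\tilde v_0,\dots,\tilde v_{p-2}$ of $\widetilde{V}_{p-2}$) followed by one copy of $N$, the $q$-degrees being arranged so that $N$ sits in $q$-degree $0$, and the arrows between consecutive copies of $M$ are identity maps (from the action $\tilde v_i\mapsto \tilde v_{i+1}$ of $\dif$), while the last arrow $M\lra N$ is $-\phi$. This is exactly the displayed diagram, so the first claim amounts to checking that the grading shift $\{-p\}$ in $M[1]\cong M\o\widetilde{V}_{p-2}\{-p\}$ places the copies of $M$ in $q$-degrees $-2p+2,\dots,-2$ relative to $N$, which matches the convention that $N$ is in $q$-degree $0$; this is a routine bookkeeping check using the stated $q$-degrees of the $\tilde v_i$.

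Second, for the cocone I would argue dually. By definition the cocone of $\phi$ is $\mathrm{Cone}(\phi)[-1]$, equivalently the object fitting in a distinguished triangle $\mathrm{Cocone}(\phi)\lra M\stackrel{\phi}\lra N$. Applying $[-1]$ to the cone model, or directly writing $\mathrm{Cocone}(\phi)\cong M\oplus\big(N\o\widetilde{V}_{p-2}\{p\}\big)$ with the appropriate twisted differential, and then re-expressing in Notation \ref{ntn-finite-cell-mod}, one gets $M$ in $q$-degree $0$ mapping by $-\phi$ into the first of $p-1$ copies of $N$, with identity arrows thereafter — precisely the second displayed diagram. Alternatively, and perhaps more cleanly, I would verify directly from Notation \ref{ntn-finite-cell-mod} that the displayed filtered module has $\dif^p\equiv 0$ (using $\phi$ being a chain map, so $\dif_N\phi=\phi\dif_M$, together with the fact that on the string of $p-1$ identity-linked copies of $N$ the differential restricts to $\dif_N\o\id + \id\o\dif_{\widetilde{V}_{p-2}}$, which is manifestly $p$-nilpotent) and that it sits in a short exact sequence of $p$-DG modules $0\lra N\o\widetilde{V}_{p-3}\{\ast\}\lra \mathrm{Cocone}\lra M\lra 0$ realizing the correct triangle after passing to $\mc{C}(A)$; comparing this triangle with the defining triangle of the cocone and invoking the axioms (the octahedral/rotation axiom plus the fact that a map of triangles which is an isomorphism on two vertices is one on the third) identifies it with $\mathrm{Cocone}(\phi)$.

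The main obstacle, and the only place demanding real care, is the sign and grading-shift reconciliation: one must be sure that the particular model $M[1]\cong M\o\widetilde{V}_{p-2}\{-p\}$ and the particular cone sign convention used elsewhere in the paper produce exactly the arrow $-\phi$ (rather than $\phi$, or $\phi$ up to a unit) and exactly the stated placement of $N$ in $q$-degree $0$; getting this wrong by a shift of $\{-2p\}$ or a sign is the easy trap, especially since $[2]\cong\{-2p\}$ means shift conventions interact nontrivially with $q$-degree. I would pin this down once and for all by computing the differential on the two-step case $p$ arbitrary but writing out $\tilde v_{p-3}, \tilde v_{p-2}$ and $N$ explicitly, confirming the sign of the connecting map against equation \eqref{eqn-canonical-iota} and the stated action $\dif(\tilde v_i)=\tilde v_{i+1}$. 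Everything else — that identity-linked strings of a module $M$ are just $M\o\widetilde{V}_{p-2}$, that the filtration is exhaustive and the subquotients are as claimed, that $\dif^p\equiv 0$ — is immediate from Notation \ref{ntn-finite-cell-mod} and the Leibniz rule.
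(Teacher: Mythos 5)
Your proposal is correct and follows essentially the same route as the paper, which simply appeals to the definition of the cone and cocone in the $p$-DG setting (citing \cite[Definition 2.14]{KQ}) and reads off the filtered-module description; you merely spell out the bookkeeping of the model $M[1]\cong M\o\widetilde{V}_{p-2}\{-p\}$, the $q$-degree placement, and the sign, which is exactly the content the paper leaves implicit. Note also that the sign $-\phi$ versus $\phi$ is immaterial for the isomorphism class, since rescaling all copies of $M$ (resp.~$N$) by $-1$ preserves the identity arrows and flips the sign of the connecting map.
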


For later convenience, and in agreement with the usual DG case, we will simply write the (co)cone of $\phi$ as
\[
M[1] \xrightarrow{\phi[-1]} N \quad \quad \left(\textrm{resp.}~M \xrightarrow{\phi[-1]}N[-1]\right).
\]

\begin{proof}
This follows from the definition of the cone and cocone. See \cite[Definition 2.14]{KQ}.
\end{proof}

\begin{lemma}\label{lemma-ses-gives-p-DG-resolution} Let $0\lra K \stackrel{\phi}{\lra} L  \stackrel{\psi}{\lra} M \lra 0$ be a short exact sequence of graded modules over the smash product algebra $A_\dif$. Then the filtered $p$-DG modules
\begin{subequations}
\begin{equation}\label{eqn-ses-lead-to-acyclic-pdg-mod-1}
0\lra K\stackrel{\phi}{\lra}\underbrace{L=\cdots =L}_{p-1}  \stackrel{\psi}{\lra} M\lra 0
\end{equation}
\begin{equation}\label{eqn-ses-lead-to-acyclic-pdg-mod-2}
0\lra \underbrace{K=\cdots =K}_{p-1} \stackrel{\phi}{\lra} L \stackrel{\psi}{\lra}  \underbrace{M=\cdots =M}_{p-1}\lra 0
\end{equation}
\end{subequations}
are acyclic.
\end{lemma}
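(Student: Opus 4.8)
The plan is to show both filtered $p$-DG modules are acyclic by exhibiting explicit contracting homotopies, exploiting the fact that the underlying short exact sequence of $A_\dif$-modules splits as a sequence of graded $\Bbbk$-vector spaces. First I would reduce to the situation where the sequence $0\to K\to L\to M\to 0$ is split over $\Bbbk$ (this is automatic since we are over a field), and fix a graded $\Bbbk$-linear splitting, say a section $s\colon M\to L$ of $\psi$ and a retraction $r\colon L\to K$ of $\phi$, with $\phi r + s\psi = \mathrm{id}_L$. I would also recall from Notation \ref{ntn-finite-cell-mod} exactly what differential the diagrams \eqref{eqn-ses-lead-to-acyclic-pdg-mod-1} and \eqref{eqn-ses-lead-to-acyclic-pdg-mod-2} carry: on the total space, which is a sum of copies of $K$, $L$, $M$, the differential is the internal differential on each summand plus the arrows $\phi$, $\psi$ (or identities) connecting consecutive summands in the appropriate $q$-degrees.

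For \eqref{eqn-ses-lead-to-acyclic-pdg-mod-1}, the underlying complex looks, after forgetting the internal differentials, like the cone-type object $K \xrightarrow{\phi} (L=\cdots=L) \xrightarrow{\psi} M$ with $p-1$ copies of $L$. The key step is to build a homotopy $h$ from the $\Bbbk$-splitting data: on the $M$-summand, $h$ should be $s\colon M\to L_{(1)}$ into the last copy of $L$; on each copy of $L$, $h$ uses the retraction $r\colon L\to K$ combined with the identity maps threading back through the chain of $L$'s; on $K$, $h$ is zero. Then one checks $d h + h d = \mathrm{id}$ on the nose, using $\psi s = \mathrm{id}_M$, $r\phi = \mathrm{id}_K$, $\phi r + s \psi = \mathrm{id}_L$, and the fact that $d$ commutes with $\phi,\psi,s,r$ up to the Leibniz-rule terms which cancel against the internal-differential parts of $dh+hd$. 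I would organize this as: since $A_\dif$ is an associative algebra and $\phi,\psi$ are $A_\dif$-module maps, the object \eqref{eqn-ses-lead-to-acyclic-pdg-mod-1} is precisely the total $p$-DG module of a short filtered complex whose associated graded, read as a plain $p$-complex of $\Bbbk$-modules, is contractible; acyclicity of a finite filtered $p$-DG module follows from acyclicity of the associated graded, which is the one-dimensional-fiber statement. Concretely, \eqref{eqn-ses-lead-to-acyclic-pdg-mod-1} is the cone of $\mathrm{id}\colon (K\to L\to\cdots\to L\to M)$-type map and one invokes Lemma \ref{lemma-cone-construction} together with the observation that the short exact sequence splits, so the cone of an isomorphism is contractible.

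The cleanest route, which I would actually take, is the following. Observe that \eqref{eqn-ses-lead-to-acyclic-pdg-mod-1} is, by Lemma \ref{lemma-cone-construction}, the cone of the map of $p$-DG modules $K\xrightarrow{\;\phi\;} L'$ where $L'$ denotes the filtered module $(\,\underbrace{L=\cdots=L}_{p-1}\to M\,)$ — equivalently $\mathrm{Cone}(\phi\colon K\to L)$ after identifying, via the $\Bbbk$-splitting, $L'$ with the cocone of $\psi$. But the cone of any quasi-isomorphism is acyclic, and $\phi\colon K\to \mathrm{cocone}(\psi)$ is a quasi-isomorphism exactly because $0\to K\to L\to M\to 0$ is exact. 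A symmetric argument handles \eqref{eqn-ses-lead-to-acyclic-pdg-mod-2}: it is the cocone of the map $L''\xrightarrow{\psi} M$ where $L''=(K\to\underbrace{K=\cdots=K}_{p-1}\to L)$ is the cone of $\phi$, and $\psi$ induces a quasi-isomorphism $\mathrm{cone}(\phi)\to M$, so its cocone is acyclic. I expect the main obstacle to be purely bookkeeping: matching the sign conventions and the $q$-degree placements in Notation \ref{ntn-finite-cell-mod} so that the identifications ``\eqref{eqn-ses-lead-to-acyclic-pdg-mod-1} $=$ cone of $\phi$'' and ``\eqref{eqn-ses-lead-to-acyclic-pdg-mod-2} $=$ cocone of $\psi$'' are literally correct, including checking that the extra $p-1$ copies of $L$ (resp.\ $K$, $M$) with identity arrows are exactly what the (co)cone construction of Lemma \ref{lemma-cone-construction} produces when one plugs in a module that is itself already a (co)cone. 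Once that matching is pinned down, acyclicity is immediate from ``cone of a quasi-isomorphism is acyclic,'' with no computation required.
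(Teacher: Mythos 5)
Your ``cleanest route'' rests on an identification that fails for $p>2$. By Lemma \ref{lemma-cone-construction}, the cone of a morphism repeats its \emph{source} $p-1$ times, and the cocone repeats its \emph{target} $p-1$ times. Hence the cone of $\phi\colon K\to L'$ with $L'=(\underbrace{L=\cdots=L}_{p-1}\to M)$ is the filtered module $\underbrace{K=\cdots=K}_{p-1}\to \underbrace{L=\cdots=L}_{p-1}\to M$, which has $2p-1$ layers, whereas \eqref{eqn-ses-lead-to-acyclic-pdg-mod-1} has only $p+1$ layers ($K$ once, $L$ repeated $p-1$ times, $M$ once); these agree only when $p=2$. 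In fact both natural (co)cone constructions available here produce \eqref{eqn-ses-lead-to-acyclic-pdg-mod-2}, not \eqref{eqn-ses-lead-to-acyclic-pdg-mod-1}: the cone of the injective quasi-isomorphism $K\to\mathrm{cocone}(\psi)$, $k\mapsto\phi(k)$ (its cokernel is $M$ linked $p$ times by isomorphisms, hence acyclic), and the cocone of the surjective quasi-isomorphism $\mathrm{cone}(\phi)\to M$ (its kernel is $K$ linked $p$ times, hence acyclic) are both isomorphic to $\underbrace{K=\cdots=K}_{p-1}\to L\to\underbrace{M=\cdots=M}_{p-1}$. So your argument, suitably cleaned up (note also that $\mathrm{cone}(\phi)$ has $p-1$, not $p$, copies of $K$), does prove acyclicity of \eqref{eqn-ses-lead-to-acyclic-pdg-mod-2}; but \eqref{eqn-ses-lead-to-acyclic-pdg-mod-1} is not the cone or cocone of any single morphism in the sense of Lemma \ref{lemma-cone-construction}, so for it ``acyclicity is immediate, no computation required'' does not follow. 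You would need either the reduction the paper uses (that \eqref{eqn-ses-lead-to-acyclic-pdg-mod-1} differs from \eqref{eqn-ses-lead-to-acyclic-pdg-mod-2} by a shift functor up to acyclic summands, which you neither state nor prove), or a direct argument: the paper exhibits inside \eqref{eqn-ses-lead-to-acyclic-pdg-mod-1} the $p$-DG submodule $K\stackrel{\cong}{\to}\mathrm{Ker}(\psi)=\cdots=\mathrm{Ker}(\psi)$ ($p$ identity-linked layers, hence acyclic) with quotient $\mathrm{Coker}(\phi)=\cdots=\mathrm{Coker}(\phi)\stackrel{\cong}{\to}M$ (again acyclic), and concludes by the two-out-of-three property for short exact sequences of $p$-complexes.

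Two secondary points. First, the fallback homotopy argument uses the wrong notion of contractibility: $dh+hd=\mathrm{id}$ is the $p=2$ relation, while for $p$-complexes null-homotopy of the identity means factoring through a free $H=\Bbbk[\dif]/(\dif^p)$-module, equivalently $\mathrm{id}=\sum_{a+b=p-1}\dif^a h\dif^b$ for some $h$. Second, the assertion that the associated graded of the defining filtration of \eqref{eqn-ses-lead-to-acyclic-pdg-mod-1} is ``contractible as a plain $p$-complex'' is false: that associated graded is just $K\oplus L^{\oplus(p-1)}\oplus M$ with only the internal differentials, which is not acyclic in general (e.g.\ when the internal differentials vanish). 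The implication ``associated graded acyclic $\Rightarrow$ total acyclic'' is fine, but it must be applied to the filtration by $\mathrm{Ker}(\psi)$ described above, not to the defining one.
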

\begin{proof}In fact, \eqref{eqn-ses-lead-to-acyclic-pdg-mod-1} differs from \eqref{eqn-ses-lead-to-acyclic-pdg-mod-2} only by a shift functor up to acyclic summands. It suffices to show either one of them.

The original short exact sequence, treated as a filtered $p$-DG module in the above notation, admits a $p$-DG submodule which is filtered isomorphic to
$$0\lra K \stackrel{\cong}{\lra} \mathrm{Ker}(\psi) \lra 0.$$
Thus we have a new short exact sequence of filtered $p$-DG modules:
\[
\xymatrix{0 \ar[r] & K \ar[r]^-{\cong} \ar@{=}[d] & \mathrm{Ker}(\psi)\ar[r] \ar[d] & 0\ar[r]\ar[d] & 0 \\
0 \ar[r] & K \ar[r]\ar[d] & L \ar[r]\ar[d] & M \ar[r]\ar@{=}[d] & 0\\
0\ar[r]& 0 \ar[r] & \mathrm{Coker}(\phi)\ar[r]^-{\cong} & M\ar[r] & 0 ,&}
\]
where we treat the first row as a submodule, whilst the last row as the quotient. In this manner, \eqref{eqn-ses-lead-to-acyclic-pdg-mod-1} with the middle term extended $p-1$ many times fits into the following diagram:
\[
\xymatrix{0 \ar[r] & K \ar[r] \ar@{=}[d] & \mathrm{Ker}(\psi)\ar@{=}[r]\ar[d]&\cdots\ar@{=}[r]& \mathrm{Ker}(\psi) \ar[r]\ar[d] & 0\ar[r]\ar[d] & 0 \\
0 \ar[r] & K \ar[r]\ar[d] & L\ar@{=}[r]\ar[d] & \cdots \ar@{=}[r] &  L \ar[r]\ar[d] & M \ar[r]\ar@{=}[d] & 0\\
0\ar[r]& 0\ar[r] &\mathrm{Coker}(\phi)\ar@{=}[r]& \cdots \ar@{=}[r]&\mathrm{Coker}(\phi)\ar[r] & M \ar[r] & 0 .&}
\]

The top and bottom filtered modules are obviously acyclic $p$-complexes, hence so is the middle one. The lemma follows.
\end{proof}

\begin{rmk}[Grading shifts]\label{rmk-deg-shifts}
We make a remark about the gradings involved in the discussion about resolutions. By convention, the maps $\phi$, $\psi$ in the above short exact sequences are homogeneous degree-zero maps of graded $A_\dif$-modules. When building a filtered $p$-DG module out of the short exact sequence, we need the differentials to be homogeneous of degree two. Therefore we have to shift the $q$-degrees of $L$'s and $M$'s accordingly. There is always a unique way to do this up to an overall shift. For instance, in the first filtered module in the above lemma, if we fix the $q$-degree of $M$ to be zero, then the corresponding degree shifts are given as below.
\[0\lra K\{-2p\}\stackrel{\phi}{\lra} L\{-2p+2\}=\cdots =L\{-2\}  \stackrel{\psi}{\lra} M\lra 0.\]
Unless we need to stress them, the $q$-degree shifts will be left unspecified.
\end{rmk}

\begin{cor}\label{cor-unbounded-complexes}Let
\[
\xymatrix{\cdots \ar[r]^-{\phi_{m+4}} & P_{m+3} \ar[r]^-{\phi_{m+3}} & P_{m+2}\ar[r]^-{\phi_{m+2}} & P_{m+1} \ar[r]^-{\phi_{m+1}}& P_m \ar[r]& 0}
\]
be a bounded-from-above exact sequence of graded modules over the smash product algebra $A_\dif$. Then the filtered $p$-DG module, with every other term repeated $(p-1)$-times,
\[
\xymatrix{\cdots \ar[r]^-{\phi_{m+4}} & P_{m+3} \ar@{=}[r]& \cdots \ar@{=}[r] & P_{m+3}\ar[r]^-{\phi_{m+3}} \ar[r] &
P_{m+2}\ar `r[rd] `_l `[llld] _-{\phi_{m+2}} `[d] [lld]
& \\
& & P_{m+1}\ar@{=}[r]&\cdots \ar@{=}[r]& P_{m+1} \ar[r]^-{\phi_{m+1}}& P_m \ar[r]& 0}
\]
is acyclic. \hfill$\square$
\end{cor}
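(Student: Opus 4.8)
The plan is to bootstrap from Lemma~\ref{lemma-ses-gives-p-DG-resolution}: first I would prove the statement when the complex has only finitely many nonzero terms, and then deduce the general bounded-from-above case by exhaustion. Set $Z_m:=P_m$ and, for $k\ge m$, $Z_{k+1}:=\ker(\phi_{k+1}\colon P_{k+1}\to P_k)$; exactness gives $Z_{k+1}=\mathrm{im}(\phi_{k+2})$, hence short exact sequences of graded $A_\dif$-modules
\[
0\lra Z_{k+1}\lra P_{k+1}\xrightarrow{\ \bar\phi_{k+1}\ } Z_k\lra 0 \qquad (k\ge m),
\]
in which $\bar\phi_{k+1}$ is $\phi_{k+1}$ with codomain restricted to $Z_k$; for $k=m$ this is just $\phi_{m+1}$, surjective by exactness at $P_m$. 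Write $X$ for the filtered $p$-DG module in the statement and let $X_{\le N}$ be its $N$-th filtration step, i.e.\ the $p$-DG submodule spanned by the copies of $P_m,\dots,P_N$; since the differential of a filtered $p$-DG module never increases the index, this is indeed a $p$-DG submodule, and $X=\bigcup_{N\ge m}X_{\le N}$.

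For the finite case I would argue by induction on the number of nonzero terms, imitating the diagram chase in the proof of Lemma~\ref{lemma-ses-gives-p-DG-resolution}. The base cases are clear: the filtered module of $0\to P_{m+1}\xrightarrow{\ \sim\ }P_m\to 0$ is a contractible cone, and a short exact sequence is Lemma~\ref{lemma-ses-gives-p-DG-resolution} itself. For the inductive step, with $X$ now the filtered module of a finite exact sequence $0\to P_n\to\cdots\to P_m\to 0$ carrying the alternating multiplicities, the submodule built from $P_m$ and the $(p-1)$ copies of $P_{m+1}$ is a copy of $\mathrm{Cone}(\phi_{m+1})$ by Lemma~\ref{lemma-cone-construction}, and this in turn sits, with quotient $Z_{m+1}$, inside the acyclic module $[\,Z_{m+1}\to P_{m+1}^{\times(p-1)}\to P_m\,]$ of Lemma~\ref{lemma-ses-gives-p-DG-resolution}. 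Peeling this acyclic piece off the bottom of $X$ and using the factorization $\phi_{m+2}=\iota_{m+1}\circ\bar\phi_{m+2}$, one reduces---up to acyclic summands and grading shifts---the acyclicity of $X$ to that of the filtered module of the shorter exact sequence $0\to P_n\to\cdots\to P_{m+2}\to Z_{m+1}\to 0$; since acyclic $p$-DG modules are stable under shifts, extensions and direct summands, the induction closes. I expect the real work here to be purely combinatorial: one must invoke, at each stage, whichever of the two forms \eqref{eqn-ses-lead-to-acyclic-pdg-mod-1}--\eqref{eqn-ses-lead-to-acyclic-pdg-mod-2} of Lemma~\ref{lemma-ses-gives-p-DG-resolution} preserves the alternating pattern, and keep careful track of the $q$-degree shifts dictated by Remark~\ref{rmk-deg-shifts}.

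For the general case I would use the exhaustion $X=\bigcup_N X_{\le N}$. Adjoining $Z_N=\ker\phi_N$ to the top of $X_{\le N}$ yields the filtered module of the \emph{finite} exact sequence $0\to Z_N\to P_N\to\cdots\to P_m\to 0$, which is acyclic by the previous step; the short exact sequence $0\to X_{\le N}\to\widetilde{X}_{\le N}\to Z_N\otimes W_N\to 0$ of $p$-DG modules (with $W_N$ an indecomposable $p$-complex) then shows that the cohomology of $X_{\le N}$ is concentrated in a band of $q$-degrees which, by the shifts of Remark~\ref{rmk-deg-shifts}, recedes to $-\infty$ as $N\to\infty$. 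Because each $P_i$ is finite-dimensional, $X$ is finite-dimensional in every $q$-degree; and since $p$-complex cohomology commutes with the filtered colimit $\varinjlim_N X_{\le N}=X$, while the cohomology of $X_{\le N}$ in any fixed $q$-degree vanishes once $N$ is large, the cohomology of $X$ vanishes in every $q$-degree, i.e.\ $X$ is acyclic. The delicate part of the whole argument is the bookkeeping of multiplicities and $q$-shifts in the finite case; the colimit step is then routine---one only needs the ``defect'' of each truncation to genuinely escape to infinity in the $q$-direction, which is exactly what acyclicity of $\widetilde{X}_{\le N}$ supplies.
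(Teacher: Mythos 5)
Your overall strategy --- splice the exact sequence into short exact sequences, get the finite truncations from Lemma \ref{lemma-ses-gives-p-DG-resolution} by induction, then exhaust --- is exactly the route the paper has in mind (it leaves the corollary as an immediate iteration of Lemma \ref{lemma-ses-gives-p-DG-resolution}, hence the tombstone), and your finite-length induction is essentially sound: one can make the ``peeling'' precise, e.g.\ the alternating-pattern module for $\cdots\to P_{m+1}\to P_m\to 0$ contains the alternating-pattern module for $\cdots \to P_{m+2}\xrightarrow{\bar\phi_{m+2}} Z_{m+1}\to 0$ as a $p$-DG submodule with quotient the cone of the isomorphism $P_{m+1}/Z_{m+1}\cong P_m$, or alternatively it surjects onto the acyclic module $Z_{m+1}\to P_{m+1}^{\times (p-1)}\to P_m$ of \eqref{eqn-ses-lead-to-acyclic-pdg-mod-1} with kernel the alternating-pattern module for $\cdots\to P_{m+3}\to Z_{m+2}\to 0$; either version closes the induction using the two-out-of-three property for acyclicity that the paper itself uses in the proof of Lemma \ref{lemma-ses-gives-p-DG-resolution}.

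The genuine gap is in your passage to the unbounded case. The corollary assumes nothing about the $P_i$ beyond being graded $A_\dif$-modules, so you may not invoke ``each $P_i$ is finite-dimensional''; and even granting degreewise finiteness, the claim that the cohomology of $X_{\le N}$ sits in a band of $q$-degrees receding to $-\infty$ is false in general. Acyclicity of $\widetilde{X}_{\le N}$ identifies the cohomology of $X_{\le N}$ with a shift of $Z_N\otimes W_N$, where $Z_N\subseteq P_N$ carries the grading shift $\{-c_N\}$ of Remark \ref{rmk-deg-shifts} with $c_N\to\infty$; but the internal degrees of $Z_N$ can grow just as fast as $c_N$ (already for $A=\Bbbk$ one can build a ``staircase'' exact sequence with $P_{N}\cong \Bbbk\{a_N\}\oplus\Bbbk\{a_{N+1}\}$ and $a_N\sim c_N$), so the top of your band need not recede and the cohomology of the truncations need not leave any fixed $q$-degree. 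The repair is to argue on the transition maps rather than on supports: every class in the cohomology of $X_{\le N}$ is carried, via the quasi-isomorphism above, by $Z_N=\mathrm{im}\,\bar\phi_{N+1}$, hence becomes a boundary once the copies of $P_{N+1}$ are adjoined, i.e.\ it dies under $X_{\le N}\hookrightarrow X_{\le N+1}$; since the slash cohomologies of $p$-complexes commute with the exhaustive filtered colimit $X=\bigcup_N X_{\le N}$, this gives acyclicity of $X$ with no finiteness hypotheses. (Under the degreewise boundedness that holds in all of the paper's applications --- projective modules over the finite-dimensional positive algebra $A_n^!$ with linearly growing shifts --- your receding-band argument does go through as stated.)
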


\paragraph{Hopfologically finite algebras.}
We end this section with a brief discussion of \emph{hopfologically finite} $p$-DG algebras. Throughout, for simplicity, we will assume that $A$ is a finite-dimensional $p$-DG algebra.

\begin{defn}\label{def-hopfological-finite-algebra}The $p$-DG algebra $A$ is called \emph{left (resp.~right) hopfologically finite} if all simple left (resp.~right) modules over $A_\dif$, the smash product algebra, are compact in the derived category of left (resp.~right) $p$-DG modules.
\end{defn}

Given a finite-dimensional $p$-DG algebra $A$, we have a strictly full subcategory $\mc{D}^f(A)\subset \mc{D}(A)$ which consists of $p$-DG left (resp.~right) modules that are quasi-isomorphic to finite-dimensional left (resp.~right) $A_\dif$-modules. See \cite[Section 9]{QYHopf} for more details. The natural inclusion of $\mc{D}^c(A)$ into $\mc{D}(A)$ factors through $\mc{D}^f(A)$, since $A$ is finite-dimensional.

\begin{lemma}\label{lemma-equivalence-DC-DF}If $A$ is left (resp.~right) hopfologically finite, then the inclusion functor $\mc{D}^c(A)\lra \mc{D}^f(A)$ is an equivalence of triangulated categories of left (resp.~right) $p$-DG modules.
\end{lemma}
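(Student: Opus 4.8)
The plan is to observe that the real content of the statement is essential surjectivity. Both $\mc{D}^c(A)$ and $\mc{D}^f(A)$ are strictly full triangulated subcategories of $\mc{D}(A)$, and since $A$ is finite-dimensional the inclusion $\mc{D}^c(A)\subseteq \mc{D}^f(A)$ already holds (as recorded above); hence the inclusion functor is automatically fully faithful and exact, and it remains only to show it is essentially surjective. An object $X$ of $\mc{D}^f(A)$ is, by definition, quasi-isomorphic -- hence isomorphic in $\mc{D}(A)$ -- to a finite-dimensional $A_\dif$-module $M$. So the lemma reduces to the assertion that every finite-dimensional left $A_\dif$-module lies in $\mc{D}^c(A)$.

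To prove this I would induct on $\dim_\Bbbk M$ using a composition series. Since $A_\dif\cong A\#H$ is a finite-dimensional graded algebra, $M$ carries a finite filtration $0=M_0\subset M_1\subset\cdots\subset M_n=M$ by graded $A_\dif$-submodules whose subquotients $S_i:=M_i/M_{i-1}$ are grading shifts of simple $A_\dif$-modules. By hopfological finiteness (Definition \ref{def-hopfological-finite-algebra}) each simple $A_\dif$-module is compact, and since the grading-shift functors $\{l\}$ are autoequivalences of $\mc{D}(A)$, every $S_i$ lies in $\mc{D}^c(A)$. Each step of the filtration is a short exact sequence of $A_\dif$-modules $0\to M_{i-1}\to M_i\to S_i\to 0$; feeding it through Lemma \ref{lemma-ses-gives-p-DG-resolution} and Lemma \ref{lemma-cone-construction} produces a distinguished triangle $M_{i-1}\to M_i\to S_i\to M_{i-1}[1]$ in $\mc{D}(A)$. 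Rotating this triangle exhibits $M_i$ as the cone of a morphism $S_i[-1]\to M_{i-1}$; as $\mc{D}^c(A)$ is triangulated, hence stable under shifts and cones, it follows inductively from $M_0=0\in\mc{D}^c(A)$ that $M_i\in\mc{D}^c(A)$ for all $i$, and in particular $M=M_n\in\mc{D}^c(A)$. This gives essential surjectivity, hence the equivalence; the right-module statement is entirely symmetric.

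The one step that deserves care is the passage from a short exact sequence of $A_\dif$-modules to a distinguished triangle in $\mc{D}(A)$: a $p$-DG short exact sequence is not literally a triangle, so one must use the acyclicity assertion of Lemma \ref{lemma-ses-gives-p-DG-resolution}. Concretely, the filtered $p$-DG module $E$ built from $0\to M_{i-1}\to M_i\to S_i\to 0$ by repeating the middle term $M_i$ a total of $p-1$ times is acyclic, i.e.\ zero in $\mc{D}(A)$; the kernel of its projection onto the top quotient $M_{i-1}$ is, by Lemma \ref{lemma-cone-construction} together with the degree bookkeeping of Remark \ref{rmk-deg-shifts}, the cone of $M_i\to S_i$, and this presentation is graded-split, so from $E\cong 0$ one reads off the desired triangle and identifies $M_{i-1}$ with the cocone of $M_i\to S_i$. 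Everything else -- full faithfulness, and the closure of $\mc{D}^c(A)$ under extensions (which one may also verify directly by applying the five lemma to the representable functors appearing in Definition \ref{def-compact-mod}) -- is purely formal.
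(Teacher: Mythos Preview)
Your argument is correct and follows the same strategy as the paper: reduce to showing every finite-dimensional $A_\dif$-module is compact, take a Jordan--H\"older filtration, and then use that simples are compact together with closure of $\mc{D}^c(A)$ under extensions (the paper phrases this last step as a Postnikov tower and the ``two-out-of-three'' property, citing \cite[Lemma~7.2]{QYHopf}). Your extra paragraph justifying the passage from a short exact sequence of $A_\dif$-modules to a distinguished triangle via Lemmas~\ref{lemma-cone-construction} and~\ref{lemma-ses-gives-p-DG-resolution} is a welcome elaboration of a point the paper leaves implicit.
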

\begin{proof} It suffices to show that any finite dimensional $p$-DG module $M$ over $A$ is compact. Consider $M$ as a module over $A_\dif$ and take a Jordan-H\"{o}lder filtration of $M$. The filtration leads to a convolution diagram (Postnikov tower) of distinguished triangles in $\mc{D}(A)$, which presents $M$ as iterated cones of maps of simple left (resp.~right) $A_\dif$-modules in the derived category. By our assumption, each simple module is compact, then so is $M$ by the usual ``two-out-of-three'' property (see, for instance, \cite[Lemma 7.2]{QYHopf}).
\end{proof}

In the rest of the paper, we will be exclusively focusing on a family of $p$-DG algebras that are positive (Definition \ref{defn-positive-p-DGA}).  Now let $A$ be a positive $p$-DG algebra, and $\{e_i|i=1,\dots, n\}$ be a complete list of pairwise non-isomorphic indecomposable idempotents in $A$. Define $P_i$ (resp. $_iP$) to be the $p$-DG module $A\cdot e_i$ (resp. $e_i\cdot A$), and $L_i:=P_i/A^{\prime}\cdot P_i$ (resp. $_iL:={_iP}/{_iP\cdot A^{\prime}}$), where $A^{\prime}=\oplus_{i>0}A^i$ is the augmentation ideal of $A$. Then $A$ is left (resp. right) hopfologically finite if and only if all $L_i$'s (resp. $_iL$'s), $i=1,\dots,n$ are compact in the derived category. Furthermore, the natural categorical pairing, which exists for any finite dimensional $p$-DG algebra,
\begin{equation}\label{eqn-categorical-pairing}
\RHOM_A(-,-) : \mc{D}^c(A) \times \mc{D}^f(A) \lra H\udmod
\end{equation}
gives rise to a pairing on $\mc{D}^c(A)$ via the equivalence $\mc{D}^c(A)\lra \mc{D}^f(A)$ of Lemma \ref{lemma-equivalence-DC-DF}. It descends to a semi-linear non-degenerate pairing on the Grothendieck group
\begin{equation}\label{eqn-K-group-pairing}
[\RHOM_A(-,-)] : K_0(A) \times K_0(A) \lra \mathbb{O}_p,
\end{equation}
which is perfect if all $L_i$'s ($i=1,\dots, n$) are absolutely irreducible.

%


\section{The zig-zag algebra \texorpdfstring{$A_n^{!}$}{An} and differentials}
\label{zigzagalgebra}
\paragraph{The quiver presentation.} Let $\Bbbk$ be a field of any characteristic $p\geq 0$ (later we will only focus on the case $p>0$). Let $n$ be a natural number greater or equal to two, and $Q_n$ be the following quiver:
\begin{equation}\label{quiver-Q}
\xymatrix{
 \overset{1}{\circ} &
 \cdots \ltwocell{'}&
 \overset{i-1}{~\circ~}\ltwocell{'}&
 \overset{i}{\circ}\ltwocell{'}&
 \overset{i+1}{~\circ~}\ltwocell{'}&
 \cdots \ltwocell{'}&
 \overset{n}{\circ}\ltwocell{'}
 }
\end{equation}
Let $\Bbbk Q_n$ be the path algebra associated to $Q_n$ over the ground field. Following Khovanov-Seidel \cite{KS}, we use, for instance, the symbol $(i|j|k)$, where $i,j,k$ are vertices of the quiver $Q_n$, to denote the path which starts at a vertex $i$, then goes through $j$ (necessarily $j=i\pm 1$) and ends at $k$.
The composition of paths is given defined by
\begin{equation}
(i_i|i_2|\cdots|i_r)\cdot (j_1|j_2|\cdots|j_s)=
\left\{
\begin{array}{ll}
(i_i|i_2|\cdots|i_r|j_2|\cdots|j_s) & \textrm{if $i_r=j_1$,}\\
0 & \textrm{otherwise,}
\end{array}
\right.
\end{equation}
where $i_1,\dots, i_r$ and $j_1, \dots, j_s$ are sequences of neighboring vertices in $Q_n$.

\begin{defn}\label{def-algebra-An}The algebra $A_n^!$ is the quotient of the path algebra $\Bbbk Q_n$ by the relations
\[
(i|i-1|i)=(i|i+1|i)~~ (i=2,\dots, n-1),~~ (1|2|1)=0.
\]
\end{defn}

The algebra $A_n^!$ is Koszul, whose quadratic dual is isomorphic to the algebra $A_n$ considered in \cite{KS}.


\paragraph{Relation to Webster algebras.}
The algebra $A_n^!$ is a particular block of a family of algebras that categorify the $n$-fold tensor product of the standard $U_q(\mf{sl}(2))$ representation. We begin by recalling the definition of a particular Webster algebra. More general versions of these algebras, which are associated with arbitrary finite Lie types, can be found in \cite{Web,Web2,Web3}.

The Webster algebra $W$ in this case is a local algebra with $n$ red strands of width one and any number of black strands. The locality indicates that generators on far away strands commute with each other. The black strands are allowed to carry dots, and red strands are not allowed to cross each other. We depict the local generators of this algebra by
\[
\begin{DGCpicture}
\DGCstrand(0,0)(0,1)
\DGCdot{0.5}
\end{DGCpicture}
\ , \quad \quad
\begin{DGCpicture}
\DGCstrand(1,0)(0,1)
\DGCstrand(0,0)(1,1)
\end{DGCpicture}
\ , \quad \quad
\begin{DGCpicture}
\DGCstrand(1,0)(0,1)
\DGCstrand[Red](0,0)(1,1)
\end{DGCpicture}
\ , \quad \quad
\begin{DGCpicture}
\DGCstrand(0,0)(1,1)
\DGCstrand[Red](1,0)(0,1)
\end{DGCpicture}
\ .
\]
The relations between the local generators are given by the usual nilHecke algebra relations among black strands
\begin{gather}
\begin{DGCpicture}[scale=0.55]
\DGCstrand(1,0)(0,1)(1,2)
\DGCstrand(0,0)(1,1)(0,2)
\end{DGCpicture}
~= 0 \,  \quad \quad
\begin{DGCpicture}[scale=0.55]
\DGCstrand(0,0)(2,2)
\DGCstrand(1,0)(0,1)(1,2)
\DGCstrand(2,0)(0,2)
\end{DGCpicture}
~=~
\begin{DGCpicture}[scale=0.55]
\DGCstrand(0,0)(2,2)
\DGCstrand(1,0)(2,1)(1,2)
\DGCstrand(2,0)(0,2)
\end{DGCpicture}
\ , \\
\begin{DGCpicture}
\DGCstrand(0,0)(1,1)
\DGCdot{0.25}
\DGCstrand(1,0)(0,1)
\end{DGCpicture}
-
\begin{DGCpicture}
\DGCstrand(0,0)(1,1)
\DGCdot{0.75}
\DGCstrand(1,0)(0,1)
\end{DGCpicture}
~=~
\begin{DGCpicture}
\DGCstrand(0,0)(0,1)
\DGCstrand(1,0)(1,)
\end{DGCpicture}
~=~
\begin{DGCpicture}
\DGCstrand(1,0)(0,1)
\DGCdot{0.75}
\DGCstrand(0,0)(1,1)
\end{DGCpicture}
-
\begin{DGCpicture}
\DGCstrand(1,0)(0,1)
\DGCdot{0.25}
\DGCstrand(0,0)(1,1)
\end{DGCpicture} \ ,
\end{gather}
the local relations among red-black strands
\begin{gather}
\begin{DGCpicture}[scale=0.55]
\DGCstrand(1,0)(0,1)(1,2)
\DGCstrand[Red](0,0)(1,1)(0,2)
\end{DGCpicture}
~=~
\begin{DGCpicture}[scale=0.55]
\DGCstrand(1,0)(1,2)
\DGCdot{1}
\DGCstrand[Red](0,0)(0,2)
\end{DGCpicture}
\ , \quad \quad \quad
\begin{DGCpicture}[scale=0.55]
\DGCstrand(0,0)(1,1)(0,2)
\DGCstrand[Red](1,0)(0,1)(1,2)
\end{DGCpicture}
~=~
\begin{DGCpicture}[scale=0.55]
\DGCstrand(0,0)(0,2)
\DGCdot{1}
\DGCstrand[Red](1,0)(1,2)
\end{DGCpicture}
\ , \\
\begin{DGCpicture}[scale=0.5]
\DGCstrand(0,0)(2,2)
\DGCstrand(1,0)(0,1)(1,2)
\DGCstrand[Red](2,0)(0,2)
\end{DGCpicture}
~=~
\begin{DGCpicture}[scale=0.5]
\DGCstrand(0,0)(2,2)
\DGCstrand(1,0)(2,1)(1,2)
\DGCstrand[Red](2,0)(0,2)
\end{DGCpicture}
\ , \quad \quad
\begin{DGCpicture}[scale=0.5]
\DGCstrand(1,0)(0,1)(1,2)
\DGCstrand(2,0)(0,2)
\DGCstrand[Red](0,0)(2,2)
\end{DGCpicture}
~=~
\begin{DGCpicture}[scale=0.5]
\DGCstrand(1,0)(2,1)(1,2)
\DGCstrand(2,0)(0,2)
\DGCstrand[Red](0,0)(2,2)
\end{DGCpicture}
\ , \\
\begin{DGCpicture}
\DGCstrand(0,0)(1,1)
\DGCdot{0.25}
\DGCstrand[Red](1,0)(0,1)
\end{DGCpicture}
~=~
\begin{DGCpicture}
\DGCstrand(0,0)(1,1)
\DGCdot{0.75}
\DGCstrand[Red](1,0)(0,1)
\end{DGCpicture}
\ , \quad \quad
\begin{DGCpicture}
\DGCstrand(1,0)(0,1)
\DGCdot{0.25}
\DGCstrand[Red](0,0)(1,1)
\end{DGCpicture}
~=~
\begin{DGCpicture}
\DGCstrand(1,0)(0,1)
\DGCdot{0.75}
\DGCstrand[Red](0,0)(1,1)
\end{DGCpicture}
, \\
\begin{DGCpicture}[scale=0.5]
\DGCstrand(1,0)(3,2)
\DGCstrand(3,0)(1,2)
\DGCstrand[Red](2,0)(1,1)(2,2)
\end{DGCpicture}
\ - \
\begin{DGCpicture}[scale=0.5]
\DGCstrand(1,0)(3,2)
\DGCstrand(3,0)(1,2)
\DGCstrand[Red](2,0)(3,1)(2,2)
\end{DGCpicture}
\ = \
\begin{DGCpicture}[scale=0.5]
\DGCstrand(1,0)(1,2)
\DGCstrand(3,0)(3,2)
\DGCstrand[Red](2,0)(2,2)
\end{DGCpicture}
\ ,
\end{gather}
together with the \emph{cyclotomic relation} that a black strand, appearing on the far left of any diagram, annihilates the entire picture:
\begin{equation}\label{eqn-cyclotomic}
\begin{DGCpicture}
\DGCstrand(1,0)(1,1)
\DGCcoupon*(1.25,0.25)(1.75,0.75){$\cdots$}
\end{DGCpicture}
~=~0.
\end{equation}

Of particular interest to us in the current work is the block of $W$ where there are $n$ red strands and only one black strand. We identify the idempotent in this block, where the $i+1$st strand is black, with the idempotent $(i)\in A_n^!$:
\[
(i)=
\begin{DGCpicture}
\DGCstrand[Red](0,0)(0,1)
\DGCstrand[Red](1,0)(1,1)
\DGCstrand(1.5,0)(1.5,1)[$^{i+1}$`{\ }]
\DGCstrand[Red](2,0)(2,1)
\DGCstrand[Red](3,0)(3,1)
\DGCcoupon*(0.25,0.25)(0.75,0.75){$\cdots$}
\DGCcoupon*(2.25,0.25)(2.75,0.75){$\cdots$}
\end{DGCpicture}
\ ,
\]
for any $2 \leq i\leq  n$
\[
(i-1|i)=
\begin{DGCpicture}
\DGCstrand[Red](0,0)(0,1)
\DGCstrand(1.5,0)(1,1)[$^{i+1}$`{\ }]
\DGCstrand[Red](1,0)(1.5,1)
\DGCstrand[Red](2,0)(2,1)
\DGCstrand[Red](3,0)(3,1)
\DGCcoupon*(0.25,0.25)(0.75,0.75){$\cdots$}
\DGCcoupon*(2.25,0.25)(2.75,0.75){$\cdots$}
\end{DGCpicture}
\ .
\]
and for any $1 \leq i\leq  n-1$
\[
(i+1|i)=
\begin{DGCpicture}
\DGCstrand[Red](0,0)(0,1)
\DGCstrand[Red](1,0)(1,1)
\DGCstrand(1.5,0)(2,1)[$^{i+1}$`{\ }]
\DGCstrand[Red](2,0)(1.5,1)
\DGCstrand[Red](3,0)(3,1)
\DGCcoupon*(0.25,0.25)(0.75,0.75){$\cdots$}
\DGCcoupon*(2.25,0.25)(2.75,0.75){$\cdots$}
\end{DGCpicture}
\ .
\]

One checks easily that this assignment is an algebra isomorphism between the $A_n^!$ and this particular Webster algebra. For instance, the relation $(1|2|1)=0$ is identified with
\[
\begin{DGCpicture}[scale=0.75]
\DGCstrand[Red](0,0)(0,2)
\DGCstrand(0.5,0)(1,1)(0.5,2)
\DGCstrand[Red](1,0)(0.5,1)(1,2)
\DGCstrand[Red](2,0)(2,2)
\DGCcoupon*(1.25,0.5)(1.75,1.5){$\cdots$}
\end{DGCpicture}
~=~
\begin{DGCpicture}[scale=0.75]
\DGCstrand[Red](0,0)(0,2)
\DGCstrand(0.5,0)(0.5,2)\DGCdot{1}
\DGCstrand[Red](1,0)(1,2)
\DGCstrand[Red](2,0)(2,2)
\DGCcoupon*(1.25,0.5)(1.75,1.5){$\cdots$}
\end{DGCpicture}
~=~0.
\]

There is also a variant of the Webster algebra $\widetilde{W}$ without the cyclotomic relation \eqref{eqn-cyclotomic} (see \cite{Web}). One particular feature of these algebras is that black strands can carry any number of dots without killing the diagram. These algebras categorify, as $U^-:=U_q^{-}(\mf{sl}_2)$-modules, the free module $U^-\o V^{\o (r-1)}$, where $r$ is the number of red strands. The categorical $U^-$-action can be described, in a nutshell, by adding or removing black strands to or from the diagrams.

\paragraph{Differentials on Webster algebras.}
In \cite{KQ}, a family of local $p$-nilpotent differential is defined on the Webster algebra. In our particular case, for any $\lambda\in \F_p$,  $\dif_{\lambda}$ acts on the local generators of the Webster algebra by
\begin{gather}
\dif_{\lambda}\left(
\begin{DGCpicture}
\DGCstrand(0,0)(0,1)
\DGCdot{0.5}
\end{DGCpicture}
\right)=
\begin{DGCpicture}
\DGCstrand(0,0)(0,1)
\DGCdot{0.5}[r]{$^2$}
\end{DGCpicture}
\ ,
\quad \quad
\dif_{\lambda}\left(
\begin{DGCpicture}
\DGCstrand(1,0)(0,1)
\DGCstrand(0,0)(1,1)
\end{DGCpicture}
\right)=~
\begin{DGCpicture}
\DGCstrand(1,0)(1,1)
\DGCstrand(0,0)(0,1)
\end{DGCpicture}
-2~
\begin{DGCpicture}
\DGCstrand(1,0)(0,1)
\DGCdot{0.75}
\DGCstrand(0,0)(1,1)
\end{DGCpicture},
\\
\dif_{\lambda}\left(
\begin{DGCpicture}
\DGCstrand(1,0)(0,1)
\DGCstrand[Red](0,0)(1,1)
\end{DGCpicture}
\right)=\lambda
\begin{DGCpicture}
\DGCstrand(1,0)(0,1)
\DGCdot{0.75}
\DGCstrand[Red](0,0)(1,1)
\end{DGCpicture}, \quad \quad
\dif_{\lambda}\left(
\begin{DGCpicture}
\DGCstrand(0,0)(1,1)
\DGCstrand[Red](1,0)(0,1)
\end{DGCpicture}
\right)=(1-\lambda)
\begin{DGCpicture}
\DGCstrand(0,0)(1,1)
\DGCdot{0.75}
\DGCstrand[Red](1,0)(0,1)
\end{DGCpicture}
\ .
\end{gather}
\begin{rmk}
This family of differentials arises naturally by considering $\widetilde{W}$ as endomorphisms of certain $p$-DG polynomial representations, or alternatively as non-trivial classes in $\mathrm{HH}^1(\widetilde{W})$, the first Hochschild cohomology group of $\widetilde{W}$.
\end{rmk}

The quiver algebra $A_n^!$ inherits the differential $\dif_\lambda$, given on the path generators by
\[
\dif_{\lambda}(i|i+1)=\lambda(i|i+1|i|i+1), \quad \quad \dif_{\lambda}(i|i-1)=(1-\lambda)(i|i-1|i|i-1),
\]
and extended to the whole algebra via the Leibnitz rule. Sometimes for convenience, we will write the loop $(i|i+1|i)=(i|i-1|i)$ at the vertex $i$ simply as $c_i$, so that $\dif_\lambda(i|i+1)=\lambda c_i(i|i+1)=\lambda (i|i+1)c_{i+1}$.

Our main goal in this section is that to show that by imposing a compatible categorical $u^-_{\mathbb{O}_p}$-action on this Webster algebra, the choice of parameters reduces significantly.
The method sketched below will be very much analogous to the one used in \cite[Section 4.2]{KQ} about categorical quantum Serre relations in the $p$-DG setting, and we refer the readers to loc.~cit.~for more related details.

We start by looking at the case $r=2$, with two black strands. The Webster algebra in this case contains the following three idempotents.
\[
\begin{DGCpicture}
\DGCstrand[Red](0,0)(0,1)
\DGCstrand(0.5,0)(0.5,1)
\DGCstrand[Red](1,0)(1,1)
\DGCstrand(1.5,0)(1.5,1)
\end{DGCpicture}
\ , \quad \quad
\begin{DGCpicture}
\DGCstrand[Red](0,0)(0,1)
\DGCstrand[Red](0.5,0)(0.5,1)
\DGCstrand(1,0)(1,1)
\DGCstrand(1.5,0)(1.5,1)
\end{DGCpicture}
\ , \quad \quad
\begin{DGCpicture}
\DGCstrand[Red](0,0)(0,1)
\DGCstrand(0.5,0)(0.5,1)
\DGCstrand(1,0)(1,1)
\DGCstrand[Red](1.5,0)(1.5,1)
\end{DGCpicture}
\ .
\]
The corresponding projective (left) modules, depicted as
\[
W_0:=
\begin{DGCpicture}[scale=0.75]
\DGCstrand[Red](0,0)(0,1)
\DGCstrand(1,0)(1,1)
\DGCstrand[Red](2,0)(2,1)
\DGCstrand(3,0)(3,1)
\DGCcoupon(-0.15,1)(3.15,2){$\widetilde{W}$}
\end{DGCpicture} \ ,
\quad \quad
W_1:=
\begin{DGCpicture}[scale=0.75]
\DGCstrand[Red](0,0)(0,1)
\DGCstrand[Red](1,0)(1,1)
\DGCstrand(2,0)(2,1)
\DGCstrand(3,0)(3,1)
\DGCcoupon(-0.15,1)(3.15,2){$\widetilde{W}$}
\end{DGCpicture} \ ,
\quad \quad
W_2:=
\begin{DGCpicture}[scale=0.75]
\DGCstrand[Red](0,0)(0,1)
\DGCstrand(1,0)(1,1)
\DGCstrand(2,0)(2,1)
\DGCstrand[Red](3,0)(3,1)
\DGCcoupon(-0.15,1)(3.15,2){$\widetilde{W}$}
\end{DGCpicture} \ ,
\]
categorify respectively the elements in the tensor-product module $U^-\o V$
\[
F(F(u_0)\o v_0) , \quad F^{2}(u_0\o v_0), \quad F^2(u_0)\o v_0,
\]
where $u_0$ (resp. $v_0$) denotes a highest weight vector in $U^-$ (resp. $V$). An easy computation shows that these elements are not linearly independent over $\Q(q)$, but instead satisfy the relation
\begin{equation}\label{eqn-Serre-relation-K0}
(q+q^{-1})F(F(u_0)\o v_0)=F^2(u_0\o v_0) + F^2(u_0)\o v_0.
\end{equation}
Dividing both sides of \eqref{eqn-Serre-relation-K0} by $q+q^{-1}$, we obtain the \emph{divided power relation}
\begin{equation}\label{eqn-divided-power}
F(F(u_0)\o v_0)=F^{(2)}(u_0\o v_0) + F^{(2)}(u_0)\o v_0,
\end{equation}
where $F^{(2)}:= \frac{F^{2}}{q+q^{-1}}=\frac{F^{2}}{[2]!}$.

Equation \eqref{eqn-divided-power} has a categorical explanation via representation theory of Webster algebras. The elements $F^{(2)}(u_0\o v_0), F(F(u_0)\o v_0), F^{(2)}(u_0)\o v_0$ are respectively categorified by the projective modules over $\widetilde{W}$
\[
W_0:=
\begin{DGCpicture}[scale=0.75]
\DGCstrand[Red](0,0)(0,1)
\DGCstrand(1,0)(1,1)
\DGCstrand[Red](2,0)(2,1)
\DGCstrand(3,0)(3,1)
\DGCcoupon(-0.15,1)(3.15,2){$\widetilde{W}$}
\end{DGCpicture} \ ,
\quad \quad
W_1^{(2)}:=
\begin{DGCpicture}[scale=0.75]
\DGCstrand[Red](0,0)(0,1)
\DGCstrand[Red](1,0)(1,1)
\DGCstrand(2,0)(3,1)
\DGCstrand(3,0)(2,1)
\DGCdot{0.25}
\DGCcoupon(-0.15,1)(3.15,2){$\widetilde{W}$}
\end{DGCpicture} \ ,
\quad \quad
W_2^{(2)}:=
\begin{DGCpicture}[scale=0.75]
\DGCstrand[Red](0,0)(0,1)
\DGCstrand(1,0)(2,1)
\DGCstrand(2,0)(1,1)
\DGCdot{0.25}
\DGCstrand[Red](3,0)(3,1)
\DGCcoupon(-0.15,1)(3.15,2){$\widetilde{W}$}
\end{DGCpicture} \ .
\]
The last two modules $W_1^{(2)}$, $W_2^{(2)}$ are projective since
$-
\begin{DGCpicture}[scale=0.5]
\DGCstrand(0,0)(1,1)
\DGCstrand(1,0)(0,1)
\DGCdot{0.25}
\end{DGCpicture}
$
is an idempotent in $\NH_2$.

There is an isomorphism (left) $\widetilde{W}$-modules
\begin{equation}\label{eqn-iso-webster-mods}
W_0\cong W_1^{(2)}\oplus W_2^{(2)},
\end{equation}
realized via the following diagram.
\[
\xymatrix{
& W_0
\ar[dl]_{\phi_1} \ar[dr]^{\phi_2} & \\
W_1^{(2)}\ar[dr]_{\psi_1} && W_2^{(2)}\ar[dl]^{\psi_2}\\
& W_0 &
}
\]
Here $\phi_1$, $\phi_2$ are given by right-multiplication on $W_0$ the elements, which are also denoted by the same letters,
\[
\phi_1=
\begin{DGCpicture}[scale=0.5]
\DGCstrand[Red](0,0)(0,2)
\DGCstrand(2,0)(3,2)
\DGCstrand(3,0)(1,2)
\DGCdot{0.35}
\DGCstrand[Red](1,0)(2,2)
\end{DGCpicture}
 \ , \quad \quad
\phi_2=
\begin{DGCpicture}[scale=0.5]
\DGCstrand[Red](0,0)(0,2)
\DGCstrand(1,0)(3,2)
\DGCstrand(2,0)(1,2)
\DGCdot{0.35}
\DGCstrand[Red](3,0)(2,2)
\end{DGCpicture};
\]
while the maps $\psi_1$, $\psi_2$ are given by
\[
\psi_1= - \
\begin{DGCpicture}[scale=0.5]
\DGCstrand[Red](0,0)(0,2)
\DGCstrand(1,0)(3,2)
\DGCstrand(3,0)(2,2)
\DGCstrand[Red](2,0)(1,2)
\end{DGCpicture}
 \ , \quad \quad
\psi_2=
\begin{DGCpicture}[scale=0.5]
\DGCstrand[Red](0,0)(0,2)
\DGCstrand(1,0)(2,2)
\DGCstrand(3,0)(1,2)
\DGCstrand[Red](2,0)(3,2)
\end{DGCpicture}.
\]
The decomposition \eqref{eqn-iso-webster-mods} is then encapsulated by nil-Hecke relations and the relation in $\widetilde{W}$
\begin{equation}
\begin{DGCpicture}[scale=0.5]
\DGCstrand[Red](0,0)(0,2)
\DGCstrand(1,0)(3,2)
\DGCstrand(3,0)(1,2)
\DGCstrand[Red](2,0)(1,1)(2,2)
\end{DGCpicture}
\ - \
\begin{DGCpicture}[scale=0.5]
\DGCstrand[Red](0,0)(0,2)
\DGCstrand(1,0)(3,2)
\DGCstrand(3,0)(1,2)
\DGCstrand[Red](2,0)(3,1)(2,2)
\end{DGCpicture}
\ = \
\begin{DGCpicture}[scale=0.5]
\DGCstrand[Red](0,0)(0,2)
\DGCstrand(1,0)(1,2)
\DGCstrand(3,0)(3,2)
\DGCstrand[Red](2,0)(2,2)
\end{DGCpicture}
\ .
\end{equation}
Furthermore, the relations
$$F^2(u_0\o v_0)=(q+q^{-1})F^{(2)}(u_0 \o v_0), \quad \quad F^{2}(u_0)\o v_0 = (q+q^{-1})F^{(2)}(u_0)\o v_0$$
are explained by the isomorphism of graded $\widetilde{W}$-modules
\[
W_{i}\cong W_{i}^{(2)}\{1\} \oplus W_{i}^{(2)}\{-1\} \quad \quad (i=1,2).
\]

We do not expect the decomposition \eqref{eqn-iso-webster-mods} to be compatible with the differential $\dif_\lambda$ and thus not to hold directly in the derived category $\mc{D}(\widetilde{W},\dif_{\lambda})$. Instead, we only impose the condition \eqref{eqn-divided-power} or its variant \eqref{eqn-Serre-relation-K0} to hold on the level of Grothendieck groups. Namely, we would like to consider whether the equality of symbols
\begin{equation}\label{eqn-equality-symbols}
(q+q^{-1}){[W_0]}={[W_1]}+{[W_2]}
\end{equation}
holds in $K_0(\widetilde{W}, \dif_\lambda)$. Since
\[
\dif_{\lambda}\left(
\begin{DGCpicture}
\DGCstrand[Red](0,0)(0,1)
\DGCstrand(0.5,0)(0.5,1)
\DGCstrand[Red](1,0)(1,1)
\DGCstrand(1.5,0)(1.5,1)
\end{DGCpicture}\right)
\ = \
\dif_{\lambda}\left(
\begin{DGCpicture}
\DGCstrand[Red](0,0)(0,1)
\DGCstrand[Red](0.5,0)(0.5,1)
\DGCstrand(1,0)(1,1)
\DGCstrand(1.5,0)(1.5,1)
\end{DGCpicture}
\right)
\ = \
\dif_{\lambda}\left(
\begin{DGCpicture}
\DGCstrand[Red](0,0)(0,1)
\DGCstrand(0.5,0)(0.5,1)
\DGCstrand(1,0)(1,1)
\DGCstrand[Red](1.5,0)(1.5,1)
\end{DGCpicture}
\right)
\ = 0
\ ,
\]
the modules $W_0$, $W_1$ and $W_2$ are direct summands of $(\widetilde{W},\dif_{\lambda})$, and thus are cofibrant. If equation \eqref{eqn-equality-symbols}
did hold in $K_0(\widetilde{W},\dif_{\lambda})$, we must have
\begin{equation}\label{eqn-lambda-constraint}
(q+q^{-1})\RHOM_{\widetilde{W}}(W_i,W_0)=\RHOM_{\widetilde{W}}(W_i,W_1)+\RHOM_{\widetilde{W}}(W_i,W_2)
\end{equation}
for $i=0,1,2$. Note that, since each $W_i$ is cofibrant, the $\RHOM$-space may be identified with the usual $\HOM$ space of $\widetilde{W}$-modules.  We now compute the terms in these constraint equations.
\begin{eg} We show by one example how the $p$-complexes $\RHOM_{\widetilde{W}}(W_i,W_j)$, $i,j\in \{0,1,2\}$ are computed. Take $i=0$ and $j=1$ for instance. The space $\HOM_{\widetilde{W}}(W_0,W_1)$ has a $\Bbbk$-basis
\[
\left\{
\begin{DGCpicture}[scale=0.5]
\DGCstrand[Red](0,0)(0,2)
\DGCstrand(3,0)(3,2)
\DGCdot{1.75}[r]{$_b$}
\DGCstrand(2,0)(1,2)
\DGCdot{1.75}[l]{$_a$}
\DGCstrand[Red](1,0)(2,2)
\end{DGCpicture} \ , \ \
\begin{DGCpicture}[scale=0.5]
\DGCstrand[Red](0,0)(0,2)
\DGCstrand(3,0)(1,2)
\DGCdot{1.75}[l]{$_a$}
\DGCstrand(2,0)(3,2)
\DGCdot{1.75}[r]{$_b$}
\DGCstrand[Red](1,0)(2,2)
\end{DGCpicture} \
\Bigg|
a,b\in \N
\right\}
\]

Equipped with $\dif_\lambda$, the $p$-complex $\RHOM_{\widetilde{W}}(W_0,W_1)$ fits into a short exact sequence
\[
0 \lra
\left\langle
\begin{DGCpicture}[scale=0.5]
\DGCstrand[Red](0,0)(0,2)
\DGCstrand(3,0)(3,2)
\DGCdot{1.75}[r]{$_b$}
\DGCstrand(2,0)(1,2)
\DGCdot{1.75}[l]{$_a$}
\DGCstrand[Red](1,0)(2,2)
\end{DGCpicture}\
\Bigg|
a,b\in \N
\right\rangle \lra
\RHOM_{\widetilde{W}}(W_0,W_1)
\lra
\left\langle
\begin{DGCpicture}[scale=0.5]
\DGCstrand[Red](0,0)(0,2)
\DGCstrand(3,0)(1,2)
\DGCdot{1.75}[l]{$_a$}
\DGCstrand(2,0)(3,2)
\DGCdot{1.75}[r]{$_b$}
\DGCstrand[Red](1,0)(2,2)
\end{DGCpicture} \
\Bigg|
a,b\in \N
\right\rangle
\lra  0.
\]
Here angled brackets mean that we are taking the $\Bbbk$-linear span of the elements in between them.
The differential $\dif_\lambda$ acts respectively on the polynomial generators of the sub and quotient complexes by
\[
\dif_\lambda
\left(
\begin{DGCpicture}[scale=0.5]
\DGCstrand[Red](0,0)(0,2)
\DGCstrand(3,0)(3,2)
\DGCstrand(2,0)(1,2)
\DGCstrand[Red](1,0)(2,2)
\end{DGCpicture}
\right)
=
\lambda \
\begin{DGCpicture}[scale=0.5]
\DGCstrand[Red](0,0)(0,2)
\DGCstrand(3,0)(3,2)
\DGCstrand(2,0)(1,2)
\DGCdot{1.75}
\DGCstrand[Red](1,0)(2,2)
\end{DGCpicture}
\ ,
\]

\[
\dif_\lambda\left(
\begin{DGCpicture}[scale=0.5]
\DGCstrand[Red](0,0)(0,2)
\DGCstrand(3,0)(1,2)
\DGCstrand(2,0)(3,2)
\DGCstrand[Red](1,0)(2,2)
\end{DGCpicture}
\right) \equiv (\lambda-2) \
\begin{DGCpicture}[scale=0.5]
\DGCstrand[Red](0,0)(0,2)
\DGCstrand(3,0)(1,2)
\DGCstrand(2,0)(3,2)
\DGCstrand[Red](1,0)(2,2)
\end{DGCpicture} \quad
\left(\mathrm{mod}
\left\{
\begin{DGCpicture}[scale=0.5]
\DGCstrand[Red](0,0)(0,2)
\DGCstrand(3,0)(3,2)
\DGCdot{1.75}[r]{$_b$}
\DGCstrand(2,0)(1,2)
\DGCdot{1.75}[l]{$_a$}
\DGCstrand[Red](1,0)(2,2)
\end{DGCpicture}\
\Bigg|
a,b\in \N
\right\}
\right)
\ .
\]
From these equations, one readily computes the symbol of the $p$-complex $\RHOM_{\widetilde{W}}(W_0,W_1)$, in the Grothendieck ring of $H\udmod$, to be
$$[\RHOM_{\widetilde{W}}(W_0,W_1)]=q(1+q^2+\cdots + q^{2(p-\lambda)})+q^{-1}(1+q^2+\cdots+q^{2(p+2-\lambda)}).$$
\end{eg}

After computing each term in the constraint equations \eqref{eqn-lambda-constraint}, we make a reduction of coefficients along the natural ring map
$$\mathbb{O}_p\cong \Z[q]/(\Psi_{p}(q^2))\twoheadrightarrow \F_p$$
where $q$ on the left-hand side is sent to one under the surjection. The resulted groups of equations read as follows
\begin{gather}
2(1+\lambda(3-\lambda))=(\lambda+3\lambda)+((1-\lambda)+(3-\lambda)),\\
2((1-\lambda)+(3-\lambda))= 4 + ((1-\lambda)^2+(1-\lambda)(3-\lambda)),\\
2(\lambda+3\lambda)=(\lambda^2+(\lambda+2)\lambda)+4.
\end{gather}

Solving these equations together, we obtain $\lambda=1$ aside from the case that $\mathrm{char}(\Bbbk)=2$. We have thus established the ``only if'' part of the following result. (When $\mathrm{char}(\Bbbk)=2$, both values $\lambda=0,1$ are allowed, see Remark \ref{rmk-specialization} below.)

\begin{prop}\label{prop-specialization-lambda}The equality of symbols
$$(q+q^{-1})[W_0]=[W_1]+[W_2]$$
holds in the Grothendieck group of the $p$-DG algebra $(\widetilde{W},\dif_\lambda)$ if and only if $\lambda=1$. Furthermore, when $\lambda=1$, there is a distinguished triangle in $\mc{D}(\widetilde{W},\dif_1)$
\[
\xymatrix{
W_2^{(2)}\ar[r]^{\psi_2} & W_0 \ar[r]^{\phi_1} & W_1^{(2)} \ar[r] & W_2^{(2)}[1].
}
\]
\end{prop}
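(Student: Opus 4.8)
The ``only if'' direction is exactly the computation just carried out, which forces $\lambda=1$ as soon as $\mathrm{char}(\Bbbk)\neq 2$. For the converse, assume $\lambda=1$; the plan is to first produce the distinguished triangle and then read off the equality of symbols from it.

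The essential idea is to upgrade the classical decomposition \eqref{eqn-iso-webster-mods} to a genuine short exact sequence of $p$-DG modules over $(\widetilde{W},\dif_1)$. Let $N:=\widetilde{W}\psi_2\subseteq W_0$ be the left ideal generated by $\psi_2$, equivalently the image of right multiplication by $\psi_2$; classically $N=\psi_2(W_2^{(2)})$. \emph{The only place the value $\lambda=1$ enters the converse is the verification that $N$ is stable under $\dif_1$}, i.e.\ that $\dif_1(\psi_2)\in\widetilde{W}\psi_2$. Granting this, $N$ becomes a $p$-DG submodule of $(W_0,\dif_1)$; set $W_2^{(2)}:=N$ with the restricted $p$-DG structure and $W_1^{(2)}:=W_0/N$ with the quotient one. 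Using \eqref{eqn-iso-webster-mods} and $\phi_i\psi_j=\delta_{ij}$, these $\widetilde W$-modules are identified with the modules of the same name, $\psi_2$ with the inclusion $N\hookrightarrow W_0$, and $\phi_1$ with the quotient map $W_0\twoheadrightarrow W_0/N$; in particular $\phi_1\circ\psi_2=0$. We thus obtain a short exact sequence of $p$-DG modules
\[
0\longrightarrow W_2^{(2)}\xrightarrow{\ \psi_2\ } W_0\xrightarrow{\ \phi_1\ }W_1^{(2)}\longrightarrow 0 .
\]

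From here the triangle follows by the usual cone argument (Lemma \ref{lemma-cone-construction}, Lemma \ref{lemma-ses-gives-p-DG-resolution}). By Lemma \ref{lemma-cone-construction}, $\mathrm{cone}(\psi_2)$ is the filtered module $[\,\underbrace{W_2^{(2)}=\cdots=W_2^{(2)}}_{p-1}\xrightarrow{-\psi_2}W_0\,]$. The morphism from $\mathrm{cone}(\psi_2)$ to $W_1^{(2)}$ given by $\phi_1$ on the bottom term and $0$ on the remaining terms is a map of $p$-DG modules (here one uses $\phi_1\circ\psi_2=0$), and it is surjective with kernel the filtered module $[\,\underbrace{W_2^{(2)}=\cdots=W_2^{(2)}}_{p-1}\xrightarrow{\ \sim\ }N\,]\cong W_2^{(2)}\o V_{p-1}$, which is acyclic since $V_{p-1}$ is a free $H$-module. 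Hence $\mathrm{cone}(\psi_2)\xrightarrow{\sim}W_1^{(2)}$ in $\mc{D}(\widetilde{W},\dif_1)$, with the composite $W_0\to\mathrm{cone}(\psi_2)\xrightarrow{\sim}W_1^{(2)}$ equal to $\phi_1$; transporting the tautological triangle $W_2^{(2)}\xrightarrow{\psi_2}W_0\to\mathrm{cone}(\psi_2)\to W_2^{(2)}[1]$ along this quasi-isomorphism gives the asserted distinguished triangle
\[
\xymatrix{W_2^{(2)}\ar[r]^{\psi_2}&W_0\ar[r]^{\phi_1}&W_1^{(2)}\ar[r]&W_2^{(2)}[1].}
\]

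Finally, the triangle yields $[W_0]=[W_1^{(2)}]+[W_2^{(2)}]$ in $K_0(\widetilde{W},\dif_1)$. A parallel, easier computation internal to the nilHecke subalgebra on the two black strands — namely that the left ideal generated by the idempotent $1-f$ (where $f$ is the idempotent $-(\text{crossing with dot})$) is $\dif_1$-stable — refines the classical isomorphism $W_i\cong W_i^{(2)}\{1\}\oplus W_i^{(2)}\{-1\}$ to a two-step $\dif_1$-stable filtration of $W_i$, whence $[W_i]=(q+q^{-1})[W_i^{(2)}]$ for $i=1,2$ (this is also contained in \cite{KQ}). Combining the two inputs, $(q+q^{-1})[W_0]=(q+q^{-1})[W_1^{(2)}]+(q+q^{-1})[W_2^{(2)}]=[W_1]+[W_2]$, which is the required equality of symbols. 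The principal obstacle is the diagrammatic identity $\dif_1(\psi_2)\in\widetilde{W}\psi_2$: the element $\psi_2$ is built from black-over-red crossings of the type annihilated by $\dif_1$ together with a single black-black crossing, and one must check that the residual Leibniz terms produced by differentiating the latter are absorbed into $\widetilde W\psi_2$; for $\lambda\neq 1$ the analogous ideal is not $\dif_\lambda$-stable, in accordance with the ``only if'' part.
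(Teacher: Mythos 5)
Your proof is correct and follows essentially the same route as the paper: the ``only if'' direction is the constraint computation immediately preceding the proposition, and for $\lambda=1$ the triangle is obtained from the $\dif_1$-compatibility of $\psi_2$ and $\phi_1$ (a diagrammatic check which you, like the paper, state rather than carry out) together with the relations $[W_i]=(q+q^{-1})[W_i^{(2)}]$, $i=1,2$, imported from the $p$-DG nilHecke computations of \cite{KQ}. Your reformulation via the $\dif_1$-stable ideal $\widetilde{W}\psi_2\subset W_0$ and the explicit cone/acyclicity argument merely spells out the passage from the resulting short exact sequence of $p$-DG modules to the distinguished triangle, a step the paper leaves implicit.
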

\begin{proof} It suffices to check the last statement, since we already know that
\[
[W_1]=(q+q^{-1})[W_1^{(2)}], \quad \quad [W_2]=(q+q^{-1})[W_2^{(2)}].
\]
This now follows by showing that the maps $\psi_2$ on $W_2^{(2)}$, $\phi_1$ on $W_0$ intertwines differentials, which we leave to the reader as an exercise.
\end{proof}

\begin{rmk}\label{rmk-specialization}In the discussion of this section, we employed the $p$-DG algebra $(\NH,\dif_1)$ from \cite{KQ}. A similar consideration for the nilHecke algebra equipped with the differential ``$\dif_{-1}$'' of loc.~cit.~shows that Proposition \ref{prop-specialization-lambda} also holds with a different choice of parameters $\lambda=0$. Of course, these choices of parameters for the nilHecke and Webster algebras are related via the algebra anti-automorphism $\tau$ given by reflecting a diagram about a horizontal axis.
\[
\tau\left(
\begin{DGCpicture}[scale=0.5]
\DGCstrand[Red](0,0)(0,2)
\DGCstrand(3,0)(1,2)\DGCdot{0.5}
\DGCstrand(2,0)(3,2)
\DGCstrand[Red](1,0)(2,2)
\end{DGCpicture}
\right)  =
\begin{DGCpicture}[scale=0.5]
\DGCstrand[Red](0,0)(0,2)
\DGCstrand(1,0)(3,2)\DGCdot{1.5}
\DGCstrand(3,0)(2,2)
\DGCstrand[Red](2,0)(1,2)
\end{DGCpicture}
\]
However, for Webster algebras, this anti-automorphism switches the left module category with the right module category over $W$, and changes the parameter in the differential
\begin{equation}\label{eqn-parameter-intertwiner}
\tau\circ \dif_{\lambda}=\dif_{1-\lambda} \circ \tau .
\end{equation}
This results in a certain ``asymmetry'' between the left and right (derived) module categories, which we will see in the next section. This does not happen for the $p$-DG nilHecke algebra since it also possesses an algebra automorphism ([Proposition 3.19, loc.~cit.]) that intertwines $(\NH,\dif_1)$ and $(\NH,\dif_{-1})$.
\end{rmk}

\section{Categorified Burau representation at a prime root of unity}
\label{catburau}

\paragraph{The NY resolution.}
Recall from the previous section that we have equipped the zig-zag algebra $A_n^!$ with a differential $\dif_\lambda$, $\lambda=0,1$. Since $A_n^!$ is positive, the simple left module $ L_i $ may be endowed with a trivial differential, making it a simple $p$-DG module over $ (A_n^!, \dif_\lambda)$.  Likewise the right module $_iL$ also has a right $p$-DG module structure.
Below we find finite cell modules quasi-isomorphic to $L_i$ and $_iL$ for $\lambda=0,1$.
Using the automorphism $\tau$ we need to only construct the cell modules for one of the parameters.

\begin{lemma} \label{lemma-ny-resolution}
\begin{enumerate}
\item[(i)] For $ \lambda=0$ and $1\leq i \leq n-1$, the left $p$-DG module $ L_i $ is quasi-isomorphic to the finite cell module
\begin{equation}\label{pdgresLi0}
\begin{gathered}
\xymatrix@C=0.7em{& & P_{i-1}\{3-2p \}  \ar@{=}[r] &\cdots\ar@{=}[r]& P_{i-1} \{ -3 \} \ar@{=}[r] & P_{i-1} \{ -1 \} \ar[drr]^{(i-1|i)} &  \\
P_i\{2-2p \} \ar[urr]^{(i|i-1)} \ar[drr]_{-(i|i+1)} & & & &  &  &  &  P_i ~ ,\\
& & P_{i+1} \{3-2p \}  \ar@{=}[r] \ar[uu]^{(i+1|i|i-1)} & \cdots \ar@{=}[r]  & P_{i+1} \{-3 \}  \ar@{=}[r] \ar[uu]^{(i+1|i|i-1)} &  P_{i+1} \{-1 \}  \ar[urr]_{(i+1|i)} \ar[uu]^{(i+1|i|i-1)} & &}
\end{gathered}
\end{equation}
while the right $p$-DG module ${_i L}$ is quasi-isomorphic to
\begin{equation}\label{pdgresLi1}
\begin{gathered}
\xymatrix@C=0.7em{& & {}_{i-1} P \{3-2p \} \ar[dd]_{(i+1|i|i-1)}  \ar@{=}[r] &\cdots\ar@{=}[r]& {}_{i-1} P \{ -3 \} \ar[dd]_{(i+1|i|i-1)} \ar@{=}[r] & {}_{i-1} P \{ -1 \} \ar[dd]_{(i+1|i|i-1)} \ar[drr]^{(i|i-1)} &  \\
{}_i P\{2-2p \} \ar[urr]^{(i-1|i)} \ar[drr]_{-(i+1|i)} & & & &  &  &  & {}_i P~.\\
& & {}_{i+1} P \{3-2p \}  \ar@{=}[r]  & \cdots \ar@{=}[r]  & {}_{i+1} P \{-3 \}  \ar@{=}[r]  &  {}_{i+1} P \{-1 \}  \ar[urr]_{(i|i+1)}  & &}
\end{gathered}
\end{equation}

\item[(ii)] For $ \lambda=1$ and $1\leq i \leq n-1$, the left $p$-DG module $L_i$ is quasi-isomorphic to the finite cell module
\begin{equation}\label{pdgresLi1}
\begin{gathered}
\xymatrix@C=0.7em{& & P_{i-1}\{3-2p \} \ar[dd]_{(i-1|i|i+1)}  \ar@{=}[r] &\cdots\ar@{=}[r]& P_{i-1} \{ -3 \} \ar[dd]_{(i-1|i|i+1)} \ar@{=}[r] & P_{i-1} \{ -1 \} \ar[dd]_{(i-1|i|i+1)} \ar[drr]^{(i-1|i)} &  \\
P_i\{2-2p \} \ar[urr]^{(i|i-1)} \ar[drr]_{-(i|i+1)} & & & &  &  &  &  P_i~,\\
& & P_{i+1} \{3-2p \}  \ar@{=}[r]  & \cdots \ar@{=}[r]  & P_{i+1} \{-3 \}  \ar@{=}[r]  &  P_{i+1} \{-1 \}  \ar[urr]_{(i+1|i)}  & &}
\end{gathered}
\end{equation}
while the right $p$-DG module $ {}_i L $ is quasi-isomorphic to
\begin{equation}\label{pdgresLi0}
\begin{gathered}
\xymatrix@C=0.7em{& & {}_{i-1}P \{3-2p \}  \ar@{=}[r] &\cdots\ar@{=}[r]& {}_{i-1}P \{ -3 \} \ar@{=}[r] & {}_{i-1}P \{ -1 \} \ar[drr]^{(i|i-1)} &  \\
{}_i P \{2-2p \} \ar[urr]^{(i-1|i)} \ar[drr]_{-(i+1|i)} & & & &  &  &  &  {}_i P~.\\
& & {}_{i+1} P \{3-2p \}  \ar@{=}[r] \ar[uu]^{(i-1|i|i+1)} & \cdots \ar@{=}[r]  & {}_{i+1} P \{-3 \}  \ar@{=}[r] \ar[uu]^{(i-1|i|i+1)} &  {}_{i+1} P \{-1 \}  \ar[urr]_{(i|i+1)} \ar[uu]^{(i-1|i|i+1)} & &}
\end{gathered}
\end{equation}
\end{enumerate}
\end{lemma}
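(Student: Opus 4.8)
\emph{Outline of the argument.} The plan is to reduce to a single value of $\lambda$ using the anti-automorphism $\tau$, to write down an explicit comparison map, and then to prove it is a quasi-isomorphism by a filtration argument. By $\tau$ together with the intertwining relation \eqref{eqn-parameter-intertwiner}, applying $\tau$ carries the $\lambda=1$ assertions of (ii) to the $\lambda=0$ assertions of (i) while interchanging left and right $p$-DG modules; so it suffices to treat the two cases with $\lambda=1$, which are moreover mirror images of each other. I will describe the argument for the left module $L_i$ at $\lambda=1$. Write $\widetilde P_i$ for the finite cell module in the statement; as an $A_n^!$-module it is the direct sum of the listed shifted projectives, and its differential is $\dif_1$ on each summand plus the maps decorating the arrows (each a right multiplication by a path). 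Define $\varrho\colon\widetilde P_i\lra L_i$ to be the augmentation $P_i\twoheadrightarrow L_i$ on the rightmost summand (the copy of $P_i$ in $q$-degree $0$) and $0$ on every other summand; it is visibly $A_n^!$-linear, and $\varrho\circ\dif_{\widetilde P_i}=0$ because every term of $\dif_{\widetilde P_i}$ landing in the rightmost $P_i$ — whether $\dif_1$ acting inside it or an arrow $(i\pm1|i)$ — has image in $\mathrm{rad}(P_i)=\ker(P_i\to L_i)$. Thus $\varrho$ is a surjective morphism of $p$-DG modules.

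\emph{The two verifications.} The first task is to check that $\widetilde P_i$ really is a $p$-DG module, i.e.\ that its differential is a derivation with $\dif^p\equiv 0$. Using the explicit action of $\dif_1$ on the path generators and the relations $(i|i-1|i)=(i|i+1|i)$, $(1|2|1)=0$ of $A_n^!$, this is a finite check of path identities. The same computation also explains the shape of the diagram: the structure maps of the minimal projective resolution of $L_i$ are not all $\dif_1$-linear (for instance $\dif_1$ annihilates $(i|i-1)$ and $(i+1|i)$ but not $(i-1|i)$ or $(i|i+1)$), and the vertical arrows labelled $(i-1|i|i+1)$ are precisely the correction terms compensating for this; the grading shifts are forced by homogeneity of the arrow maps (Remark \ref{rmk-deg-shifts}) and the $(p-1)$-fold repetitions along the legs build in the (co)cone construction of Lemma \ref{lemma-cone-construction}. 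The second task is to show $\varrho$ is a quasi-isomorphism, i.e.\ that $\ker\varrho$ is acyclic. Here I would start from the finite minimal projective resolution $0\to P_i\{-2\}\to\bigoplus_{j:|i-j|=1}P_j\{-1\}\to P_i\to L_i\to 0$ of $L_i$ over $A_n^!$ (which exists since $A_n^!$ is Koszul dual to the radical-cube-zero algebra $A_n$ and hence has global dimension $\le 2$), and realize $\widetilde P_i$ as the iterated cone of this resolution in which each structure map that fails to be $\dif_1$-linear is replaced by a thickened version — a leg of $p-1$ copies of a projective together with a correction arrow. Equivalently, and more robustly, one produces an explicit finite filtration of the purely projective $p$-DG module $\ker\varrho$ whose successive subquotients are acyclic $p$-DG modules of the standard shape, and concludes with Lemma \ref{lemma-ses-gives-p-DG-resolution} and Corollary \ref{cor-unbounded-complexes}: in effect one peels off, from the bottom up, the acyclic pieces coming from the short exact sequences $0\to\mathrm{rad}(P_i)\to P_i\to L_i\to 0$ and $0\to P_i\{-2\}\to\bigoplus_{j}P_j\{-1\}\to\mathrm{rad}(P_i)\to 0$, using that the covering maps involved become $\dif_1$-linear only after the twist provided by the correction arrows.

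\emph{The main obstacle.} The difficulty is entirely the combinatorial bookkeeping: the conditions $\dif^p\equiv 0$ and ``$\varrho$ is a quasi-isomorphism'' have to be met \emph{simultaneously}, and it is their interaction — playing the loop relations $(i|i\pm1|i)=c_i$ off against the differential $\dif_1$ — that pins down both the correction arrows and the signs in the diagram, so that keeping everything consistent through the filtration argument takes some care. The value $i=1$ must be treated separately: there the vertex $i-1$ does not exist, the relation $(1|2|1)=0$ enters, and the $P_{i-1}$-leg drops out, so $\widetilde P_1$ is the shorter cell module built from $0\to\mathrm{rad}(P_1)\to P_1\to L_1\to 0$ and the cell resolution of $\mathrm{rad}(P_1)$. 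Once these identities are established, $\varrho$ is the required quasi-isomorphism; the right-module assertions follow by the mirror-image argument, and the $\lambda=0$ statements by transport along $\tau$.
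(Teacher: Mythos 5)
Your proposal is correct and follows essentially the same route as the paper: reduce to $\lambda=1$ via $\tau$, check by explicit path computations that the correction arrow $(i-1|i|i+1)$ makes the differential $p$-nilpotent and the structure maps $\dif$-equivariant, and deduce that the augmentation onto $L_i$ is a quasi-isomorphism from the Koszul projective resolution $0\to P_i\{2\}\to P_{i-1}\{1\}\oplus P_{i+1}\{1\}\to P_i\to L_i\to 0$ together with Lemma \ref{lemma-ses-gives-p-DG-resolution}. The paper packages your filtration/peeling step slightly more economically, applying Lemma \ref{lemma-ses-gives-p-DG-resolution} once to the short exact sequence whose middle term is the single corrected $p$-DG module $P_{i-1}\{1\}\xrightarrow{(i-1|i|i+1)}P_{i+1}\{1\}$, but this is the same argument.
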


Remark that in the lemma, we have adopted the convention that the $p$-DG modules $P_0:=0$ and $_0P:=0$, so that any map into or out of them vanishes.

\begin{proof}
We will only establish this for $ \lambda=1 $ and the left module $L_i$. The right module case is left as an exercise for the reader. The parameter $\lambda=0$ follows from applying the anti-automorphism $\tau$ as in equation \eqref{eqn-parameter-intertwiner}. For the ease of notation, we denote by $c_i=(i|i+1|i)$ the ``circle'' path based at the vertex $i$.

Consider the following finite cell $p$-DG module over $ A_n^!) $ (using Notation \ref{ntn-finite-cell-mod})
$$\xymatrix{P_{i-1}\{1\} \ar[rr]^{(i-1|i|i+1)} && P_{i+1}\{1\}},$$
whose differential is temporarily written as $ \widehat{\dif} $ to avoid confusion.
Observe that $\widehat{\dif}^p \equiv 0$ holds on this module since
$$\dif^k(i-1|i|i+1)=(k+1)!(i-1|i|i+1)c_{i+1}^k,$$
so that $\widehat{\dif}^p(i-1)=\dif^{p-1}(i-1|i|i+1)\equiv 0$ (c.f.~Example \ref{eg-finite-cofibrant-mod}).

Now we have a short exact sequence of $ {(A_n^!)}_\dif $-modules
\begin{equation}\label{pdgsimpleres}
\begin{gathered}
\xymatrix{  & & P_{i-1} \{1 \}\ar@{-->}[dd]^{_{(i-1|i|i+1)}}\ar[dr]^-{\psi} & &\\
0 \ar[r] & P_i \{2 \} \ar[ur]^-{\phi}  \ar[dr]_-{\phi} && P_i \ar[r] & 0~,\\
&& P_{i+1} \{ 1 \}\ar[ur]_-{\psi} && }
\end{gathered}
\end{equation}
where
\[
\psi=
\left(
  \begin{array}{ccc}
  (i-1|i)  &  (i+1|i)   \\
  \end{array}
\right)
\hspace{.5in}
\phi =
\left(
  \begin{array}{ccc}
  { (i|i-1)} \\
  {- (i|i+1)}   \\
  \end{array}
\right) ,
\]
and the dashed arrow indicates that it is an internal differential of the mid-column module equipped with $\widehat{\dif}$.
We show that the maps $ \phi $ and $\psi$ commute with differentials.
On the one hand $\phi \partial (i) = 0$, while on the other hand
\begin{equation*}
\widehat{\partial} \phi (i) = \widehat{\partial} ((i|i-1)-(i|i+1)) = (i|i-1|i|i+1) - (i|i+1|i|i+1) = 0.
\end{equation*}
Similarly, we check
\begin{gather*}
\psi \widehat{\partial} (i+1) = 0= \partial (i+1|i)= \partial \psi (i+1),
\\
\psi \widehat{\partial} (i-1) = \psi (i-1|i|i+1) = (i-1|i|i+1|i)= \partial (i-1|i)=\partial \psi (i-1) .
\end{gather*}


We then obtain the desired finite cell resolution by applying Lemma~\ref{lemma-ses-gives-p-DG-resolution} to the short exact sequence~\eqref{pdgsimpleres}. The degree shifts are made according to Remark \ref{rmk-deg-shifts}.
\end{proof}

In what follows, the cofibrant replacement constructed above will be referred to as the ``NY resolution''. We gather here some immediate consequences arising from having this resolution.

\begin{cor}\label{cor-RHOM-of-simples}For $\lambda=0,1$ and $1\leq i \leq n-1$, there are homotopy equivalences of $p$-complexes
\[
\RHOM_{A_n^!}(L_i,L_j)\cong
\left\{
\begin{array}{cc}
\widetilde{V}_0\oplus \widetilde{V}_0\{2p-2\}\cong \widetilde{V}_0\oplus \widetilde{V}_0\{-2\}[-2]& j=i,\\
\widetilde{V}_{p-2}\{p-1\} \cong \widetilde{V}_0\{-1\}[-1] & j=i\pm 1,\\
0 & |j-i|>1.
\end{array}
\right.
\]
\end{cor}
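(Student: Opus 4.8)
The plan is to plug the NY resolution of Lemma~\ref{lemma-ny-resolution} into the definition of $\RHOM$ and read off the answer summand by summand. Since each NY resolution $\mathbf{p}L_i$ is a finite cell module, hence cofibrant, Definition~\ref{def-RHOM} gives $\RHOM_{A_n^!}(L_i,L_j)\cong\HOM_{A_n^!}(\mathbf{p}L_i,L_j)$, whose differential is $f\mapsto\pm\,f\circ\dif_{\mathbf{p}L_i}$ because $\dif_{L_j}=0$. I would then use that for a projective $P_k=A_n^!e_k$ the space $\HOM_{A_n^!}(P_k\{l\},L_j)$ is $\Bbbk$, sitting in a single $q$-degree, when $k=j$ and is $0$ otherwise (here $L_j$ is one-dimensional, living at the vertex $j$ in $q$-degree $0$). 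Hence, as a graded vector space, $\HOM_{A_n^!}(\mathbf{p}L_i,L_j)$ is spanned by the projections $P_j\{l\}\twoheadrightarrow L_j$ taken over the $P_j$-summands occurring in the NY resolution of $L_i$. The one elementary fact I would isolate up front is that \emph{every} structure map in an NY resolution, apart from the identity arrows $P_k\{l-2\}=P_k\{l\}$ running along a string, is right multiplication by a path of positive length, hence lands in the augmentation ideal and is annihilated by any projection onto a simple module; the same is true of every internal differential $\dif_\lambda$, which raises $q$-degree.

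Granting this, the three cases should drop out. If $|i-j|>1$ then no $P_j$ occurs among $P_{i-1},P_i,P_{i+1}$, so the complex is $0$. If $j=i$, the only $P_i$-summands of the resolution are the leftmost $P_i\{2-2p\}$ and the terminal $P_i$, and no structure arrow joins them, so $\HOM_{A_n^!}(\mathbf{p}L_i,L_i)$ is two-dimensional: a class in $q$-degree $0$, represented by the resolution map $\mathbf{p}L_i\to L_i$, and a class in $q$-degree $2p-2$. The differential must vanish, since the degree-two map out of $q$-degree $2p-2$ lands in the (empty) $q$-degree $2p$, while the degree-$0$ generator, being the resolution map, is already a chain map; so $\RHOM_{A_n^!}(L_i,L_i)\cong\widetilde{V}_0\oplus\widetilde{V}_0\{2p-2\}$. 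If $j=i\pm1$, the $P_j$-summands form a single string $P_j\{3-2p\}=\cdots=P_j\{-1\}$ of $p-1$ copies joined by identity arrows; applying $\HOM_{A_n^!}(-,L_j)$ keeps those identities as the differential while, by the remark above, the $(i|i\pm1)$-leg into the string and whatever vertical $\dif_\lambda$-arrows touch it all die. The outcome is the indecomposable $p$-complex $\widetilde{V}_{p-2}$ up to rescaling the basis, sitting in $q$-degrees $1,3,\dots,2p-3$, i.e.\ $\widetilde{V}_{p-2}\{p-1\}$. I would carry out the same computation verbatim for $\lambda=0$ and for the right modules. Rewriting is then bookkeeping: $[-2]\cong\{2p\}$ by \eqref{eqn-hom-shift-two-equals-grading-shift} turns $\widetilde{V}_0\{2p-2\}$ into $\widetilde{V}_0\{-2\}[-2]$, and since $[-1]$ is tensoring with $\widetilde{V}_{p-2}\{p\}$ and $\widetilde{V}_0\otimes\widetilde{V}_{p-2}\cong\widetilde{V}_{p-2}$, one gets $\widetilde{V}_{p-2}\{p-1\}\cong\widetilde{V}_0\{-1\}[-1]$.

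The hard part is not conceptual but is the bookkeeping: keeping the $q$-degree shifts exactly right (so that the string really lands in $\widetilde{V}_{p-2}\{p-1\}$ and the second summand in $q$-degree $2p-2$ rather than $2-2p$, which depends on getting the grading convention on $\HOM$ straight), and checking that the induced differential on the $\HOM$-complex is precisely the one described. The place I would be most careful is the uniformity claim: I want one calculation to cover both $\lambda=0,1$ and both sides, which works only because the four NY resolutions all have the same shape --- a leftmost $P_i\{2-2p\}$, two length-$(p-1)$ identity-strings of $P_{i\pm1}$'s, a terminal $P_i$, and connecting maps valued in the augmentation ideal --- so that the ``extra'' $\dif_\lambda$-arrows and legs genuinely play no role in the $\HOM$-complexes computed here.
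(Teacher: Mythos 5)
Your proposal is correct and follows essentially the same route as the paper: replace $L_i$ by the NY resolution, use $\HOM_{A_n^!}(P_k\{l\},L_j)\cong (k)L_j\cong\delta_{kj}\Bbbk$, and observe that all non-identity structure maps and the internal $\dif_\lambda$ are killed upon projecting to the simple. The paper only writes out the $j=i$ case and leaves the rest as an exercise, so your explicit treatment of the $j=i\pm1$ string and of the vanishing of the induced differential is just the filled-in version of the same argument.
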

\begin{proof}We show how to derive the first isomorphism from Lemma \ref{lemma-ny-resolution} and leave the reader to prove the rest. Denote by $\mathbf{p}(L_i)$ the cofibrant replacement constructed in Lemma. Then there are homotopy equivalences of $p$-complexes
\begin{align*}
\RHOM_{A_n^!}(L_i,L_i)  & \cong \HOM_{A_n^!}(\mathbf{p}(L_i),L_i) \\
& \cong \HOM_{A_n^!}(P_i,L_i) \oplus \HOM_{A_n^!}(P_i\{2-2p\},L_i)
\cong \Bbbk \oplus \Bbbk\{2p-2\},
\end{align*}
where we have used that $\HOM_{A_n^!}(P_j,L_i)\cong (j)L_i\cong \delta_{ij}\Bbbk$. The result follows.
\end{proof}

In the proof of the next result, we will use the following fact. If $A$ is a positive $p$-DG algebra, and
$$\xymatrix{\cdots \ar[r] & P_i\{r\} \ar[r]^-{a}& P_j\{s\}\ar[r] & \cdots}$$
is part of a finite cell left module $P$, where $a$ is an element in $\HOM_{A}(P_i,P_j)=e_iAe_j$, then the dual module $\RHOM_{A}(P,A)$, equipped with the induced differential (negating the differential on $A$), is a finite cell right $A$-module, equipped with the conjugate filtrations and opposite grading shifts:
$$\xymatrix{\cdots \ar[r] & {_jP}\{-s\} \ar[r]^-{-a}& {_iP}\{-r\}\ar[r] & \cdots.}$$
This follows easily from the way in which the Hopf algebra $H=\Bbbk[\dif]/(\dif^p)$ acts on dual representations.

\begin{cor}\label{cor-dual-mod-iso-right-mod}For $\lambda=0,1$ and $i\in \{1,\dots, n-1\}$, there is an isomorphism of right (resp.~left) $p$-DG modules in the derived category $\mc{D}(A_n^!,\dif_\lambda)$
\[
\RHOM_{A_n^!}(L_i, A_n^!) \cong {_iL}[-2]\{-2\} \quad \quad \left(\textrm{resp.}~\RHOM_{A_n^!}({_iL}, A_n^!)\cong L_i[-2]\{-2\} \right).
\]
\end{cor}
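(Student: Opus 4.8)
The plan is to compute both sides directly using the NY resolution of Lemma~\ref{lemma-ny-resolution}, and to exploit the duality fact recorded just before the statement: applying $\RHOM_{A_n^!}(-,A_n^!)$ to a finite cell left module (written in the schematic notation of Notation~\ref{ntn-finite-cell-mod}) produces the finite cell right module obtained by replacing each $P_k\{m\}$ by ${_kP}\{-m\}$, each structure map $a$ by $-a$, and reversing the direction of the filtration (equivalently, negating the internal differential on $A_n^!$). Concentrating on $\lambda = 1$ and the left module $L_i$ (the case $\lambda=0$, and the ``resp.'' statement for ${_iL}$, follow by applying the anti-automorphism $\tau$ exactly as in the proof of the lemma, using \eqref{eqn-parameter-intertwiner}), the NY resolution $\mathbf{p}(L_i)$ is a cofibrant replacement, so $\RHOM_{A_n^!}(L_i, A_n^!)$ is computed as $\HOM_{A_n^!}(\mathbf{p}(L_i), A_n^!)$, which by freeness of $A_n^!$ and the duality fact is precisely the finite cell right module obtained by dualizing \eqref{pdgresLi1} termwise.

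The main computational step is then to identify this dualized complex, up to homotopy equivalence in $\mc{D}(A_n^!,\dif_1)$, with a shift of the right NY resolution of ${_iL}$ from part (i) (the $\lambda=0$ right-module picture), and hence with ${_iL}[-2]\{-2\}$ itself. Concretely, dualizing \eqref{pdgresLi1}: the term $P_i$ in $q$-degree $0$ becomes ${_iP}$ in degree $0$; the term $P_i\{2-2p\}$ becomes ${_iP}\{2p-2\}$; each $P_{i\pm1}\{1-2k\}$ becomes ${_{i\pm1}P}\{2k-1\}$; the three ``outgoing'' maps $(i|i-1)$, $-(i|i+1)$, $(i-1|i)$, $(i+1|i)$, $(i-1|i|i+1)$ get negated and their directions reversed. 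Reading off the result and re-indexing the $q$-degree so that the leftmost term sits in degree $0$ rather than the rightmost, one recognizes exactly the right-module NY resolution of ${_iL}$ displayed in part (i) of the lemma, tensored with a grading shift $\{-2\}$ and the homological shift $[-2]$ (recall $[2] \cong \{-2p\}$ from \eqref{eqn-hom-shift-two-equals-grading-shift}, so $[-2]\{-2\}$ amounts to a net shift of $\{2p-2\}$, which matches the range of degrees $2-2p$ through $0$ in \eqref{pdgresLi1} being sent to $0$ through $2p-2$). The point is that the anti-automorphism $\tau$ interchanges the structure maps $(i-1|i|i+1)$ with $(i+1|i|i-1)$, which is exactly the difference between the $\lambda=1$ and $\lambda=0$ pictures, so the dual of the $\lambda=1$ left resolution really is (a shift of) the $\lambda=0$ right resolution, and the latter is quasi-isomorphic to ${_iL}$ by Lemma~\ref{lemma-ny-resolution}(i).

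The step I expect to be the main obstacle is the bookkeeping of signs and $q$-degree shifts: one must check that the negated, direction-reversed structure maps in the dualized complex genuinely assemble into a valid filtered $p$-DG module isomorphic to the stated right-module resolution (not merely term-by-term matching), and that the overall shift is exactly $[-2]\{-2\}$ and not some other combination consistent with \eqref{eqn-hom-shift-two-equals-grading-shift}. This is most safely done by fixing the $q$-degree normalization via Remark~\ref{rmk-deg-shifts} at the outset — say, pinning the $q$-degree of the ``$P_i$'' summand — and tracking it through dualization, where $\{m\}\mapsto\{-m\}$; the residual global shift is then read off unambiguously. Once the identification of complexes is in hand, invoking Lemma~\ref{lemma-ny-resolution}(i) to replace the right-module resolution by ${_iL}$ finishes the argument, and the parenthetical statement is the mirror image obtained by swapping the roles of left and right throughout (or, again, by one more application of $\tau$).
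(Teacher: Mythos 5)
Your overall strategy---dualize the NY resolution term by term via the fact recorded just before the statement, compare the result with a right-module NY resolution, and read off the net shift $\{2p-2\}=[-2]\{-2\}$---is exactly the paper's, but the key identification is wrong. The duality $\HOM_{A_n^!}(-,A_n^!)$ does not change the parameter $\lambda$ and does not reverse path labels: a structure map of the left finite cell module given by right multiplication by a path $a\in e_jA_n^!e_k$ dualizes to left multiplication by the \emph{same} path $a$ (negated), with only the direction of the filtration arrows and the grading shifts reversed. Hence the vertical maps in the dual of the $\lambda=1$ left resolution of $L_i$ are still $-(i-1|i|i+1)$, pointing from the ${_{i+1}P}$-chain to the ${_{i-1}P}$-chain; up to an overall sign and the shift $\{2p-2\}$ this is the $\lambda=1$ right resolution of ${_iL}$ in part (ii) of Lemma~\ref{lemma-ny-resolution}, \emph{not} the $\lambda=0$ right resolution of part (i), whose vertical maps are $(i+1|i|i-1)$ and point the other way. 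Your justification---that $\tau$ interchanges $(i-1|i|i+1)$ with $(i+1|i|i-1)$ and that this is ``exactly the difference'' between the two pictures---conflates the anti-automorphism $\tau$ (which reverses paths, swaps left with right modules, and intertwines $\dif_1$ with $\dif_0$ by \eqref{eqn-parameter-intertwiner}) with the duality $\RHOM_{A_n^!}(-,A_n^!)$ (which preserves both the parameter and the path labels).

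This is not merely a labelling slip: your final step invokes Lemma~\ref{lemma-ny-resolution}(i) to conclude that the dualized complex is quasi-isomorphic to ${_iL}$, but part (i) asserts this for right $p$-DG modules over $(A_n^!,\dif_0)$, whereas the isomorphism claimed for $\lambda=1$ must hold in the derived category of right $p$-DG modules over $(A_n^!,\dif_1)$---the category in which $\HOM_{A_n^!}(\mathbf{p}(L_i),A_n^!)$ actually lives. The $\lambda=0$ resolution is not a cofibrant replacement of ${_iL}$ there (its total differential is built from $\dif_0$), so the quasi-isomorphism you quote is unavailable. The repair is simply to compare with part (ii)'s right resolution for the same $\lambda=1$; with that correction your shift bookkeeping ($q$-degrees $2-2p,\dots,0$ going to $0,\dots,2p-2$, i.e.\ a net $[-2]\{-2\}$) and the remark about an overall sign go through, and this is precisely the paper's argument.
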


\begin{proof}We show the claim for $\lambda=1$.  By the identification
$$\RHOM_{A_n^!}(L_i,A_n^!)\cong \HOM_{A_n^!}(\mathbf{p}(L_i),A_n^!),$$
where $\mathbf{p}(L_i)$ is the resolution in Lemma \ref{lemma-ny-resolution}, and the remark above, we may identify $\RHOM_{A_n^!}(L_i,A_n^!)$ with the finite cell module
\[
\begin{gathered}
\xymatrix@C=0.9em{& & {}_{i-1}P \{1\}  \ar[r]^-{-1} & {}_{i-1}P \{ 3 \} \ar[r]^-{-1}&\cdots\ar[r]^-{-1} & {}_{i-1}P \{ 2p-3 \} \ar[drr]^-{-(i|i-1)} &  \\
{}_i P  \ar[urr]^-{-(i-1|i)} \ar[drr]_-{(i+1|i)} & & & &  &  &  &  {}_i P\{2p-2\}~.\\
& & {}_{i+1} P \{1 \}  \ar[r]^-{-1} \ar[uu]_-{_{-(i-1|i|i+1)}} & {}_{i+1} P \{3 \}  \ar[r]^-{-1} \ar[uu]_-{_{-(i-1|i|i+1)}} & \cdots \ar[r]^-{-1}  &  {}_{i+1} P \{2p-3 \}  \ar[urr]_{-(i|i+1)} \ar[uu]_-{_{-(i-1|i|i+1)}} & &}
\end{gathered}
\]
Comparing this with the resolution for $_iL$ one concludes that the isomorphism above exists, up to an overall minus sign.
\end{proof}

\begin{rmk}
Although we do not need a cofibrant resolution of $ L_n $ to construct a categorical braid group action we include it here for completeness since it is easy for $ \lambda=0$.
\begin{itemize}
\item[(i)] As a left $p$-DG module for $ \lambda=0$, $ L_n $ is quasi-isomorphic to the finite cell module
\[
\xymatrix{P_{n-1}\{3-2p \}  \ar@{=}[r] &\cdots\ar@{=}[r]& P_{n-1} \{ -3 \} \ar@{=}[r] & P_{n-1} \{ -1 \} \ar[rr]^-{(n-1|n)} & &  P_n.
}
\]

\item[(ii)] As a right $p$-DG module for $ \lambda=0$, $ {}_n L $ is quasi-isomorphic to the finite cell module
\[
\xymatrix{{}_{n-1} P \{3-2p \}  \ar@{=}[r] &\cdots\ar@{=}[r]& {}_{n-1} P \{ -3 \} \ar@{=}[r] & {}_{n-1} P \{ -1 \} \ar[rr]^-{(n|n-1)} & &  {}_n P.
}
\]
\end{itemize}
\end{rmk}

\begin{lemma}For $\lambda \in \{0,1\}$, the left (resp.~right) $p$-DG module $L_n$ (resp.~$_nL$) is compact in the derived category $\mc{D}(A_n^!,\dif_\lambda)$. Consequently, $(A_n^!,\dif_\lambda)$ is left (resp.~right) hopfologically finite.
\end{lemma}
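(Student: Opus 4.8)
The plan is to reduce the statement to facts already in hand: the NY resolutions of Lemma~\ref{lemma-ny-resolution} handle $L_i$ and $_iL$ for $1\le i\le n-1$, so the real content is the compactness of the two ``boundary'' modules $L_n$ and $_nL$, and the two values of $\lambda$ will be treated separately, the second via the anti-automorphism $\tau$.

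For $\lambda=0$ I would simply observe that the two displays in the remark preceding this lemma are finite cell $p$-DG modules over $(A_n^!,\dif_0)$ quasi-isomorphic to $L_n$ and $_nL$ respectively: each is obtained by applying Lemma~\ref{lemma-ses-gives-p-DG-resolution} to a short exact sequence $0\to P_{n-1}\{\ast\}\to P_n\to L_n\to 0$ of $(A_n^!)_\dif$-modules (and its right analogue), the only point to check being that right multiplication by $(n-1|n)$ intertwines the differentials, which holds because $\dif_0(n-1|n)=0$. Since finite cell modules are compact, $L_n$ and $_nL$ are compact in $\mc{D}(A_n^!,\dif_0)$. Together with Lemma~\ref{lemma-ny-resolution}(i) this shows that every simple left, resp.\ right, module over $(A_n^!)_{\dif_0}$ is compact, and the characterization recalled at the end of Section~\ref{generalities} (namely that $A$ is left, resp.\ right, hopfologically finite iff all $L_i$, resp.\ all $_iL$, are compact) then gives that $(A_n^!,\dif_0)$ is left and right hopfologically finite.

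For $\lambda=1$ the naive two-term resolution breaks down, since now $\dif_1(n-1|n)=(n-1|n|n-1|n)$ is nonzero in general, so right multiplication by $(n-1|n)$ is no longer a map of $p$-DG modules; this is the one genuinely nontrivial point. I would get around it by invoking the graded algebra anti-automorphism $\tau$ with $\tau\circ\dif_\lambda=\dif_{1-\lambda}\circ\tau$ (equation~\eqref{eqn-parameter-intertwiner}), which exhibits an isomorphism of $p$-DG algebras $(A_n^!,\dif_1)\cong ((A_n^!)^{\mathrm{op}},\dif_0)$ and hence an equivalence of triangulated categories between $\mc{D}(A_n^!,\dif_1)$ and the derived category of right $(A_n^!,\dif_0)$-modules. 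Since $\tau$ fixes every vertex idempotent $(i)$, this equivalence carries the left simple $L_i$ to the right simple $_iL$ over $(A_n^!,\dif_0)$; in particular it sends $L_n$ over $(A_n^!,\dif_1)$ to $_nL$ over $(A_n^!,\dif_0)$, which is compact by the previous paragraph, and equivalences of triangulated categories preserve compact objects. Applying $\tau$ in the other direction handles $_nL$ over $(A_n^!,\dif_1)$, and combining this with Lemma~\ref{lemma-ny-resolution}(ii) for the remaining simples and the same characterization as before yields left and right hopfological finiteness of $(A_n^!,\dif_1)$. The only obstacle beyond bookkeeping is exactly this passage through $\tau$ for $\lambda=1$; equivalently, one could instead write out an explicit $\dif_1$-equivariant finite cell resolution of $L_n$ by hand, which is the concrete form of the same difficulty.
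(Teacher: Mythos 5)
Your reduction has a genuine gap, and it sits exactly at the point you yourself flag as ``the one genuinely nontrivial point.'' The anti-automorphism $\tau$ of \eqref{eqn-parameter-intertwiner} pairs \emph{left} modules over $(A_n^!,\dif_1)$ with \emph{right} modules over $(A_n^!,\dif_0)$, and left modules over $(A_n^!,\dif_0)$ with right modules over $(A_n^!,\dif_1)$. So the genuinely easy case --- left $L_n$ at $\lambda=0$, where the connecting element $(n-1|n)$ satisfies $\dif_0(n-1|n)=0$ --- only yields, via $\tau$, compactness of ${_nL}$ at $\lambda=1$. To obtain $L_n$ at $\lambda=1$ you need ${_nL}$ at $\lambda=0$, and your argument for that case checks the differential of the wrong element: a map of right modules ${_{n-1}P}\to{_nP}$ is left multiplication by an element of $e_nA_n^!e_{n-1}$, namely $(n|n-1)$, and $\dif_0(n|n-1)=(n|n-1|n|n-1)=(n|n-1)c_{n-1}$, which is nonzero for $n\geq 3$ (in $A_3^!$, for instance, $(3|2|3|2)=(3|2|1|2)\neq 0$). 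Consequently the proposed two-term ``finite cell module'' for ${_nL}$ at $\lambda=0$ does not satisfy $\dif^p=0$ and is not a $p$-DG module at all (and the obvious truncation of the NY resolution fails for the same reason: $\dif^p$ of the top generator produces $c_n\neq 0$). Thus the hard pair $\{$left at $\dif_1$, right at $\dif_0\}$ is left unproven, and $\tau$ cannot manufacture it from the easy pair $\{$left at $\dif_0$, right at $\dif_1\}$; the appeal to $\tau$ is, in effect, circular.

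The paper closes precisely this case by a different, resolution-free argument, which you would need to reproduce (or replace by an honest cofibrant replacement, which is the concrete form of the same difficulty). Namely, for $\lambda=1$ the projective $P_1=A_n^!e_1$ is a $p$-DG direct summand of $A_n^!$, hence compact; as a module over the smash product $(A_n^!)_\dif$ it has basis $\{(i|i-1|\cdots|1)\mid 1\leq i\leq n\}$ and a Jordan--H\"older filtration by $\dif$-stable submodules whose subquotients are all the simples, with socle isomorphic to $L_n$ (up to a grading shift). Since $L_1,\dots,L_{n-1}$ are compact by Lemma \ref{lemma-ny-resolution}, the resulting Postnikov tower together with the two-out-of-three property for compact objects forces $L_n$ to be compact; the remaining cases then do follow from the $\lambda=0$ remark and \eqref{eqn-parameter-intertwiner} in the way you intend.
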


\begin{proof}We only prove the result for the left module $L_n$ when $\lambda =1$. The rest of the cases are taken care of by the previous remark and \eqref{eqn-parameter-intertwiner}.

Consider the cofibrant module $P_1$ which is a $p$-DG direct summand of $A_n^!$. It has a basis consisting of elements
\[
\left\{(i|i-1|\dots|1)\big|1\leq i \leq n\right\}.
\]
It follows that $P_1$, as a module over the smash product ring $(A_n^!)_\dif$, has an increasing Jordan-H\"older filtration
\[
F^0=0, \quad F^i=\bigoplus_{j=1}^i \Bbbk (n-j+1|\dots|1),
\]
whose subquotients are isomorphic to the simples $L_i$ ($i=1,\dots, n$), with $F^1\cong L_n$ being the socle. It follows that, in the derived category, $P_1$ is a convolution of $L_n$ and the other simples $L_i$ ($i=1,\dots, n-1$), the latter ones being compact due to the NY resolutions. Since $P_1$ is compact, applying the standard ``two-out-of-three'' property, we see that $L_n$ must also be compact.
\end{proof}

\begin{rmk}\label{rmk-hermitian-structure}
As a summary of the discussion in this section, we conclude that the family of $p$-DG algebras $(A_n^!,\dif_\lambda)$, $\lambda\in \{0,1\}$, is both left and right hopofologically finite (Definition \ref{def-hopfological-finite-algebra}). Thus we may identify the finite and compact derived categories of either left or right $p$-DG modules over these algebras. Since the simple modules over $A_n^!$ are absolutely irreducible, the $\RHOM$-pairing on $\mc{D}^c(A_n^!)$ (c.f. \eqref{eqn-categorical-pairing} \eqref{eqn-K-group-pairing}) descends to a perfect semi-linear pairing on the Grothendieck group
\[
[\RHOM_{A_n^!}(-,-)] \colon K_0(A_n^!)\times K_0(A_n^!)\lra \mathbb{O}_{p},
\]
which is conjugate linear in the first variable and linear in the second variable. Base changing along the surjective ring map
\[
\mathbb{O}_p\lra \mathcal{O}_{2p}, \quad q\mapsto \zeta_{2p},
\]
one can embed $K_0(A_n^!)\o_{\mathbb{O}_p}\mathcal{O}_{2p}$ as a rank-$n$ $\mathcal{O}_{2p}$-lattice inside a genuine hermitian $\mathbb{C}$-vector space with the extended hermitian form. In what follows, we will abuse notation and refer to the above $\mathbb{O}_p$-structure as ``hermitian'', even though it is not a subring of $\mathbb{C}$.
\end{rmk}

\paragraph{Temperley-Lieb action.} Now we construct a categorical Temperley-Lieb algebra action on $\mc{D}^c(A_n^!,\dif_\lambda)$, where $\lambda = 0,1$.
\begin{defn}\label{defn-cup-cap-functors} Consider the $p$-DG algebra $(A_n^!,\dif_\lambda)$, where $\lambda\in \{0,1\}$, and let $1\leq i \leq n-1$.
\begin{enumerate}
\item[(i)] We define the \emph{$i$th cup functor} to be the functor
\[
\cup_i \colon \mathcal{D}^c(\Bbbk) \lra \mathcal{D}^c(A_n^!) , \quad \quad \cup_i(V) : = L_i \otimes V.
\]

\item[(ii)] The \emph{$i$th cap functor} is given by
\[
\cap_i \colon \mathcal{D}^c(A_n^!) \lra  \mathcal{D}^c(\Bbbk) , \quad \quad \cap_i(M) : = {}_i L \otimes^\mathbf{L}_{A_n^!} M.
\]
\end{enumerate}
\end{defn}

A useful property of these functors is that, up to some grading shifts, they are biadjoint to each other.

\begin{lemma}
\label{lemma-adjoints}
Let $ \lambda \in \{0,1 \} $ and $ i \in \{ 1,\ldots,n-1 \}$. The functor $\cup_i$ admits right and left adjoints.
\begin{enumerate}
\item The right adjoint of the functor $ \cup_i \colon \mc{D}^c(\Bbbk, \partial_{\lambda}) \longrightarrow \mc{D}^c(A_n^!, \partial_{\lambda}) $ is isomorphic to
$$ \cap_i[-2]\{-2\} \colon \mc{D}^c(A_n^!, \partial_{\lambda}) \longrightarrow \mc{D}^c(\Bbbk, \partial_{\lambda}) .$$
\item The left adjoint of the functor $ \cup_i \colon \mc{D}^c(\Bbbk, \partial_{\lambda}) \longrightarrow \mc{D}^c(A_n^!, \partial_{\lambda}) $ is isomorphic to
$$ \cap_i \colon \mc{D}^c(A_n^!, \partial_{\lambda}) \longrightarrow \mc{D}^c(\Bbbk, \partial_{\lambda}) $$
\end{enumerate}
\end{lemma}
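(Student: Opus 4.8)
The strategy is to reduce the adjunction statements to the biadjointness properties of the underlying bimodules, since both functors are given by tensoring with (explicitly identified) $p$-DG bimodules. Concretely, $\cup_i(V) = L_i \otimes V$ is tensoring with the $(A_n^!,\Bbbk)$-bimodule $L_i$, while $\cap_i(M) = {}_iL \otimes^{\mathbf L}_{A_n^!} M$ is tensoring with the $(\Bbbk, A_n^!)$-bimodule ${}_iL$ (derived over $A_n^!$, but $\Bbbk$-flat on the other side so no derivation is needed there). For such tensor functors the adjoint is always computed by an $\RHOM$-bimodule: the right adjoint of $(L_i \otimes_\Bbbk -)$ is $\RHOM_{A_n^!}(L_i, -)$, and the left adjoint of $({}_iL \otimes^{\mathbf L}_{A_n^!} -)$ is again an $\RHOM$ over $\Bbbk$, which (since everything is finite-dimensional over the field $\Bbbk$) is just a $\Bbbk$-linear dual. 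So the whole lemma will follow once I identify these $\RHOM$-bimodules with shifts of ${}_iL$ and $L_i$ respectively — and that identification is exactly the content of Corollary~\ref{cor-dual-mod-iso-right-mod}.

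In more detail, for part (1) I would argue as follows. Using the NY resolution $\mathbf{p}(L_i)$, which is a finite cell module hence cofibrant, I have for any $M \in \mc{D}^c(A_n^!)$ a functorial chain of homotopy equivalences
\[
\RHOM_{A_n^!}(\cup_i(V), M) \cong \RHOM_{A_n^!}(L_i \otimes V, M) \cong \RHOM_{A_n^!}(L_i, M) \otimes V^*,
\]
using that $V$ is a finite-dimensional $p$-complex and $\cup_i(V)\cong A_n^!e_i\otimes_{A_n^!} L_i \otimes V$ is still cofibrant. Now Corollary~\ref{cor-dual-mod-iso-right-mod} gives $\RHOM_{A_n^!}(L_i, A_n^!) \cong {}_iL[-2]\{-2\}$ as right $p$-DG modules, and since $L_i$ is compact this upgrades (by the standard argument that $\RHOM_{A_n^!}(L_i,-)$ commutes with the relevant colimits / is computed by the finite cell resolution) to a natural isomorphism $\RHOM_{A_n^!}(L_i, M) \cong {}_iL \otimes^{\mathbf L}_{A_n^!} M [-2]\{-2\} = \cap_i(M)[-2]\{-2\}$. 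Passing to stable invariants via \eqref{eqn-morphism-space-as-invariants} converts this into the desired adjunction isomorphism $\Hom_{\mc{D}^c(A_n^!)}(\cup_i V, M) \cong \Hom_{\mc{D}^c(\Bbbk)}(V, \cap_i(M)[-2]\{-2\})$.

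For part (2), I would instead observe that the left adjoint of $\cup_i$, i.e.\ the left adjoint of $(L_i\otimes_\Bbbk -)$, is $-\otimes^{\mathbf L}_{A_n^!}(L_i)^\vee$ where $(L_i)^\vee$ is the $\Bbbk$-linear dual of $L_i$ regarded as a $(\Bbbk, A_n^!)$-bimodule; because $L_i$ is a simple finite-dimensional module with trivial differential, its dual is canonically identified with ${}_iL$ as a right-module-with-$p$-DG-structure (self-duality of the simple, compatibly with $\dif_\lambda$ acting trivially on $L_i$ and on ${}_iL$). Hence the left adjoint is $\cap_i$ exactly, with no shift — the shift asymmetry between (1) and (2) being precisely the ``left–right asymmetry'' flagged in Remark~\ref{rmk-specialization} and reflected in the $[-2]\{-2\}$ of Corollary~\ref{cor-dual-mod-iso-right-mod}. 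The parameter $\lambda = 0$ case follows from $\lambda = 1$ by applying the anti-automorphism $\tau$ via \eqref{eqn-parameter-intertwiner}, which swaps left and right modules and hence swaps the roles of $L_i$ and ${}_iL$ and of the two adjoints, so no separate computation is needed.

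\textbf{Main obstacle.} The conceptually routine-looking step that actually requires care is promoting the module-level identification $\RHOM_{A_n^!}(L_i, A_n^!)\cong {}_iL[-2]\{-2\}$ (Corollary~\ref{cor-dual-mod-iso-right-mod}) to a \emph{functorial} isomorphism $\RHOM_{A_n^!}(L_i, -)\cong {}_iL\otimes^{\mathbf L}_{A_n^!}(-)[-2]\{-2\}$ of functors $\mc{D}^c(A_n^!)\to \mc{D}^c(\Bbbk)$, and checking this is compatible with the graded-$H$-module enhancement (cf.\ Remark~\ref{rmk-why-not-EXT}) rather than just on the level of $\Ext$-spaces or Grothendieck groups. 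This is where compactness of $L_i$ and the finiteness of $A_n^!$ are essential: one uses the finite cell (NY) resolution to reduce to the case $M = A_n^!$ itself, where the statement is Corollary~\ref{cor-dual-mod-iso-right-mod}, and then extends along finite cell filtrations / distinguished triangles using the two-out-of-three property. Getting the grading and homological shifts to match on the nose — as opposed to up to some unspecified shift — is the only genuinely delicate bookkeeping in the argument.
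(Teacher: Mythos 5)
Your proposal is essentially the paper's own argument: derived tensor--hom adjunction, the identification $\RHOM_{A_n^!}(L_i,M)\cong \RHOM_{A_n^!}(L_i,A_n^!)\otimes^{\mathbf L}_{A_n^!}M$ reduced to $M\cong A_n^!$ where Corollary~\ref{cor-dual-mod-iso-right-mod} applies, the duality $({_iL})^*\cong L_i$ of the one-dimensional simples for the left adjoint, and passage to stable $H$-invariants via \eqref{eqn-morphism-space-as-invariants}. The only blemish is your parenthetical claim that $L_i\otimes V$ is ``still cofibrant'' (it is not --- that is the point of the NY resolution), but this is not load-bearing since $\RHOM$ already builds in the cofibrant replacement, exactly as in the paper.
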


\begin{proof}
First we compute the right adjoint of $ \cup_i $. Taking any $p$-complex $U$ and (left) $p$-DG module $M$ over $A_n^!$, we have
\begin{align}\label{eqn-RHOM-biadjoint-1}
\RHOM_{A_n^!}(L_i \otimes_{\Bbbk} U, M) &\cong \RHOM_{\Bbbk}(U, \RHOM_{A_n^!}(L_i,M))\nonumber \\
& \cong \RHOM_{\Bbbk}(U, \RHOM_{A_n^!}(L_i,A_n^!)\otimes_{A_n^!}^\mathbf{L} M)\nonumber \\
& \cong \RHOM_{\Bbbk}(U, {}_i L[-2] \{ -2 \} \otimes_{A_n^!}^\mathbf{L} M).
\end{align}
The first isomorphism follows from the tensor-hom adjunction in the $p$-DG setting (\cite[Lemma 8.14]{QYHopf}).
The second isomorphism is true by considering a cofibrant resolution of $M$ and reducing it to the case $M\cong A_n^!$, where it is then obvious.
The third isomorphism is given in Corollary ~\ref{cor-dual-mod-iso-right-mod}.

Next we compute the left adjoint of  $ \cup_i $.
\begin{align}\label{eqn-RHOM-biadjoint-2}
 \RHOM_{\Bbbk}({}_iL \otimes_{A_n^!} M, U) & \cong \RHOM_{A_n^!}(M, \RHOM_{\Bbbk}({}_iL_,\Bbbk) \otimes_{\Bbbk} U)\nonumber \\
& \cong \RHOM_{A_n^!}(M, L_i \otimes_{\Bbbk} U)
\end{align}
The first isomorphism is once again just the tensor-hom adjunction, while for the second one, we note that, as $p$-DG bimodules over $(A_n^!,\Bbbk)$, there is an isomorphism
$$ \RHOM_{\Bbbk}({}_iL_,\Bbbk)\cong ({_iL})^* \cong L_i, $$
where we have used that any finite dimensional $p$-complex is automatically finite-cell, and $\RHOM_\Bbbk(-,\Bbbk)$ is just taking the vector space dual. Now the lemma follows from taking degree-zero stable $H$-invariant (c.f.~\eqref{eqn-morphism-space-as-invariants}) on both sides of the above equations \eqref{eqn-RHOM-biadjoint-1}, \eqref{eqn-RHOM-biadjoint-2}.
\end{proof}

We will also write down the explicit unit and counit maps associated to the biadjunction. To do this, we need the following lemma, which is the tensor analogue of Corollary \ref{cor-RHOM-of-simples}.

\begin{lemma}\label{lemma-tensor-simples}For the $p$-DG module $L_i$ over $(A_n^!,\dif_\lambda)$, where $\lambda\in \{0,1\}$ and $1\leq i \leq n-1$, there are isomorphisms of $p$-complexes
\[
_jL\otimes_{A_n^!}^{\mathbf{L}} {L_i} \cong
\left\{
\begin{array}{cc}
\widetilde{V}_0[2] \{2 \} \oplus \widetilde{V}_0 & \textrm{if $j=i$,}\\
\widetilde{V}_{p-2}\{1-p\} \cong \widetilde{V}_0[1]\{1\} & \textrm{if $j=i\pm 1$,}\\
0 & \textrm{if $|j-i|>1$.}
\end{array}
\right.
\]
\end{lemma}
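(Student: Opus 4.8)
The plan is to compute $_jL \otimes_{A_n^!}^{\mathbf L} L_i$ by replacing the left module $L_i$ with its NY resolution $\mathbf{p}(L_i)$ from Lemma \ref{lemma-ny-resolution}, so that the derived tensor product becomes the honest complex $_jL \otimes_{A_n^!} \mathbf{p}(L_i)$. (By associativity of the derived tensor product it does not matter which factor we resolve; resolving $L_i$ is convenient since that resolution is written out explicitly.) The effect of applying $_jL \otimes_{A_n^!} (-)$ to the cell module \eqref{pdgresLi1} is to replace each $P_k = A_n^! e_k$ appearing in the resolution by $_jL \otimes_{A_n^!} A_n^! e_k \cong {_jL}\, e_k = (j|k) \cong \delta_{jk}\, \Bbbk$ (the one-dimensional span of the path from $j$ to $k$, which is nonzero only when $j=k$, and then it is the base field concentrated in the appropriate $q$-degree). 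One must also track what each structural map $(i\pm 1|i)$, $(i|i\pm 1)$, $(i-1|i|i+1)$, and each identity map ``$=$'' becomes after this base change: multiplication by a path $e_k A_n^! e_l$ turns into the corresponding map $(j|k) \to (j|l)$, which is again either zero or an isomorphism of one-dimensional spaces depending on whether it is consistent with composition of paths.

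First I would dispose of the case $|j-i|>1$: then $j \notin \{i-1,i,i+1\}$, so \emph{every} term of $_jL \otimes_{A_n^!} \mathbf{p}(L_i)$ vanishes, giving $0$ immediately. Next, for $j = i\pm 1$ (say $j=i+1$ for concreteness; the case $j=i-1$ is symmetric, or follows by the reflection $\tau$ together with \eqref{eqn-parameter-intertwiner}): only the row of $P_{i+1}$-terms and the single $P_{i+1}$ at the top of the $\psi$-column survive, namely the $p-1$ copies $P_{i+1}\{1\} = \cdots = P_{i+1}\{2p-3\}$ connected by identity maps, together with the final term $P_i$ which dies. Here the map $(i+1|i)$ out of the last $P_{i+1}$ lands in $P_i$, which is zero after tensoring, so what remains is exactly the $p$-complex $\underbrace{\Bbbk = \cdots = \Bbbk}_{p-1}$ with the indicated grading shift, i.e. $\widetilde{V}_{p-2}\{1-p\}$, which by the shift conventions recalled in Section \ref{generalities} (in particular $M[1]\cong M\o\widetilde{V}_{p-2}\{-p\}$ applied to $M = \widetilde V_0$) is $\widetilde{V}_0[1]\{1\}$. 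One must check that the internal differentials coming from the maps $(i-1|i|i+1)$ in \eqref{pdgresLi1} do not contribute here, which they do not since the $P_{i-1}$-row is annihilated by $_jL\otimes_{A_n^!}(-)$ when $j = i+1$.

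Finally, the main case $j=i$: after base change, the surviving terms are the two ``outer'' copies $P_i\{2-2p\}$ and $P_i$, each becoming a single $\Bbbk$, plus possibly contributions forcing these into a genuine two-term complex. I expect the key point — and the main obstacle — to be verifying that the resulting differential between these two surviving $\Bbbk$'s is \emph{zero}, so that the answer splits as $\widetilde V_0[2]\{2\} \oplus \widetilde V_0$ rather than being a (contractible) two-term piece. Concretely, the composite $P_i\{2-2p\} \to P_{i\pm1}\{\cdots\} \to \cdots \to P_{i\pm 1}\{-1\} \to P_i$ through the resolution must be analyzed: each intermediate $P_{i\pm 1}$-term dies under $_iL\otimes(-)$, so the only way a map $\Bbbk \to \Bbbk$ could survive is through the vertical $(i-1|i|i+1)$-arrows, which also vanish after tensoring with $_iL$; hence the differential vanishes and the complex is a direct sum of its two one-dimensional pieces. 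Identifying the degree shift $2-2p$ on the surviving top term with $[2]\{2\}$ is then again just the relation $M[2]\cong M\{-2p\}$ from \eqref{eqn-hom-shift-two-equals-grading-shift} applied to $\widetilde V_0\{2\}$. (Alternatively, one could deduce this lemma from Corollary \ref{cor-RHOM-of-simples} together with Corollary \ref{cor-dual-mod-iso-right-mod}, using $_jL\otimes^{\mathbf L}_{A_n^!}L_i \cong \RHOM_{A_n^!}(\RHOM_{A_n^!}({_jL},A_n^!), L_i) \cong \RHOM_{A_n^!}(L_j[-2]\{-2\},L_i) \cong \RHOM_{A_n^!}(L_j,L_i)[2]\{2\}$, and then reading off the three cases from Corollary \ref{cor-RHOM-of-simples}; I would include this as a remark or as the actual proof, since it is shorter and makes the shift bookkeeping automatic.)
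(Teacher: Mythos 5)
Your proposal is correct and follows essentially the same route as the paper: the paper's own proof of Lemma \ref{lemma-tensor-simples} simply declares it ``entirely similar'' to Corollary \ref{cor-RHOM-of-simples}, i.e.\ one tensors $_jL$ against the NY resolution $\mathbf{p}(L_i)$, notes that $_jL\otimes_{A_n^!}P_k\cong \delta_{jk}\Bbbk$, and reads off the three cases exactly as you do (your alternative deduction from Corollaries \ref{cor-RHOM-of-simples} and \ref{cor-dual-mod-iso-right-mod} is a reasonable shortcut as well). The only slip is cosmetic: in the case $j=i\pm 1$ the surviving copies carry the shifts $\{3-2p\},\dots,\{-1\}$ rather than $\{1\},\dots,\{2p-3\}$, which is what is consistent with your (correct) stated answer $\widetilde{V}_{p-2}\{1-p\}\cong \widetilde{V}_0[1]\{1\}$.
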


\begin{proof}
The proof is entirely similar to that of Corollary \ref{cor-RHOM-of-simples}. We leave it to the reader as an exercise.
\end{proof}

Now we compute the units and counits of the above biadjunction, which are given by maps of $p$-DG bimodules. We start with the adjoint pair $(\cup_i,\cap_i[-2]\{-2\})$. Then the counit and unit are respectively given by the composition of $p$-DG bimodule maps over $A_n^!$ and $\Bbbk$
\begin{gather}
\epsilon_1 \colon L_i\o {_iL}[-2]\{-2\}\cong L_i\o \RHOM_{A_n^!}(L_i,A_n^!) \stackrel{\lambda_1}{\lra} A_n^!, \label{eqn-biadjunction-1}\\
\eta_1 \colon \Bbbk \stackrel{\pi_1}{\lra}  {_iL}[-2]\{-2\} \o_{A_n^!}^{\mathbf{L}}L_i \cong  \Bbbk \oplus \Bbbk[-2]\{-2\}. \label{eqn-biadjunction-2}
\end{gather}
Here the first isomorphism in $\epsilon_1$ is given by Corollary \ref{cor-RHOM-of-simples}, and the map $\lambda_1$ is the natural pairing map.\footnote{To be more precise this pairing map is given by the natural pairing $\mathbf{p}(L_i)\o \HOM_{A_n^!}(\mathbf{p}(L_i), A_n^!)\lra A_n^!$, $(x,f)\mapsto f(x)$. It involves a choice of a cofibrant replacement, and thus is only well-defined in the derived category.} As for $\eta_1$, the isomorphism comes from the previous lemma, while $\pi_1$ is the inclusion map into the first factor.

For the adjoint pair $(\cap_i,\cup_i)$, we have
\begin{gather}
\epsilon_2 \colon _iL\o_{A_n^!}^{\mathbf{L}} {L_i}\cong \Bbbk[2]\{2\} \oplus \Bbbk \stackrel{\pi_2}{\lra}\Bbbk, \label{eqn-biadjunction-3}\\
\eta_2 \colon A_n^! \stackrel{\lambda_2}{\lra} \RHOM_\Bbbk (\RHOM_{A_n^!}(A_n^!,{L_i}),{L_i}) \cong L_i\o {_iL}. \label{eqn-biadjunction-4}
\end{gather}
The first isomorphism in $\epsilon_2$ comes from Lemma \ref{lemma-tensor-simples}, while $\pi_2$ is the projection onto the second factor. Similarly as for $\epsilon_1$, the map $\lambda_2$ in $\eta_2$ is a canonical evaluation map. Notice that in equation \eqref{eqn-biadjunction-4}, both $\RHOM$'s can be replaced by $\HOM$ since $A_n^!$ (resp.~any $p$-complex) is cofibrant over $A_n^!$ (resp.~$\Bbbk$). From this, one readily checks that $\eta_2$ is, up to a non-zero constant, the unique bimodule map which sends $(i)\in A_n^!$ to $(i)\o (i)\in L_i\o {_iL}$, and other idempotents to zero.

Now for each $i\in \{1,\dots, n-1\}$ and $\lambda\in \{0,1\}$, we define an endo-functor
\begin{equation}\label{eqn-def-TL-generator}
\mf{U}_i = \cup_i \circ \cap_i [-1]\lbrace -1 \rbrace  : \mc{D}^c(A_n^!,\dif_\lambda) \lra \mc{D}^c(A_n^!,\dif_\lambda),
\end{equation}
which is diagrammatically presented as
\[
\mf{U}_i=
\begin{DGCpicture}[scale=0.8]
\DGCstrand(-0.5,0)(-0.5,2)[$^1$`$\empty$]
\DGCstrand(0.25,0)(0.25,2)[$^2$`$\empty$]
\DGCstrand(1,0)(1,0.25)(2,0.25)(2,0)/d/[$^i$`$^{i+1}$]
\DGCstrand/d/(1,2)(1,1.75)(2,1.75)(2,2)
\DGCstrand(3.5,0)(3.5,2)[$^{n-1}$`$\empty$]
\DGCstrand(2.75,0)(2.75,2)[$^n$`$\empty$]
\DGCcoupon*(0.45,0.5)(0.95,1.5){$\dots$}
\DGCcoupon*(2.15,0.5)(2.65,1.5){$\dots$}
\end{DGCpicture}
\ .
\]
The diagrammatics explains the names ``cups'' and ``caps'' introduced earlier.

\begin{thm}
\label{thm-TL-action}
For $ i=1, \ldots, n-1 $, the functors $ \mf{U}_i $ are self-biadjoint, and they enjoy the following functor-isomorphism relations.
\begin{enumerate}
\item[(i)] $ \mf{U}_i \circ \mf{U}_i \cong \mf{U}_i [-1]\lbrace -1 \rbrace \oplus \mf{U}_i[1] \lbrace 1 \rbrace $,
\item[(ii)] $ \mf{U}_i \circ \mf{U}_j \cong \mf{U}_j \circ \mf{U}_i $ for $ |i-j|>1 $,
\item[(iii)] $ \mf{U}_i \circ \mf{U}_j \circ \mf{U}_i \cong \mf{U}_i $ for $ |i-j|=1 $.
\end{enumerate}
\end{thm}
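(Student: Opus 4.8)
The plan is to exploit the biadjunction between the cup and cap functors established in Lemma~\ref{lemma-adjoints} together with the explicit tensor-product computations of Lemma~\ref{lemma-tensor-simples}, so that every statement about $\mf{U}_i$ becomes a statement about $L_i$, ${_iL}$, and the derived tensor products ${_jL}\otimes^{\mathbf{L}}_{A_n^!}L_i$. The self-biadjointness of $\mf{U}_i$ is formal: $\mf{U}_i=\cup_i\circ\cap_i[-1]\{-1\}$, and since $\cup_i$ is left adjoint to $\cap_i$ and right adjoint to $\cap_i[-2]\{-2\}$ (Lemma~\ref{lemma-adjoints}), composing adjunctions and keeping track of the homological shift $[-1]\{-1\}$ (using $[2]\cong\{-2p\}$ from \eqref{eqn-hom-shift-two-equals-grading-shift}) shows that $\mf{U}_i$ is both left and right adjoint to itself. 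I would dispatch this first, as a warm-up.

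For (i), I would compute $\mf{U}_i\circ\mf{U}_i$ directly: it equals $\cup_i\circ\cap_i\circ\cup_i\circ\cap_i[-2]\{-2\}$, and the middle composite $\cap_i\circ\cup_i$ is the functor $V\mapsto {_iL}\otimes^{\mathbf{L}}_{A_n^!}L_i\otimes V$. By Lemma~\ref{lemma-tensor-simples} with $j=i$, this is $V\mapsto (\widetilde{V}_0[2]\{2\}\oplus\widetilde{V}_0)\otimes V$, i.e.\ the identity functor on $\mc{D}^c(\Bbbk)$ direct sum its shift by $[2]\{2\}$. Substituting back, $\mf{U}_i\circ\mf{U}_i\cong \cup_i\circ(\Id\oplus\Id[2]\{2\})\circ\cap_i[-2]\{-2\}\cong \mf{U}_i[1]\{1\}\oplus\mf{U}_i[-1]\{-1\}$ after redistributing shifts. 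For (ii), when $|i-j|>1$ the functors $\cup_i,\cap_i$ involve only the idempotent $(i)$ and its neighbors, which commute with those for $j$; more precisely ${_jL}\otimes^{\mathbf{L}}_{A_n^!}L_i\cong 0$ is false here but one checks instead that the NY resolutions of $L_i$ and $L_j$ only interact through $P_k$'s with $|k-i|\le 1$ and $|k-j|\le 1$, which are disjoint, so $\cap_j\circ\cup_i$ and $\cap_i\circ\cup_j$ both vanish appropriately and the two composites $\mf{U}_i\circ\mf{U}_j$ and $\mf{U}_j\circ\mf{U}_i$ are each isomorphic to the same "parallel" bimodule; I would phrase this as an isomorphism of the corresponding $p$-DG bimodules $L_i\otimes{_iL}\otimes L_j\otimes{_jL}\cong L_j\otimes{_jL}\otimes L_i\otimes{_iL}$ built from the commuting idempotents.

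For (iii), with $|i-j|=1$, the composite $\mf{U}_i\circ\mf{U}_j\circ\mf{U}_i$ unwinds to a functor built from the chain ${_iL}\otimes^{\mathbf{L}}L_j$ and ${_jL}\otimes^{\mathbf{L}}L_i$, each of which is $\widetilde{V}_{p-2}\{1-p\}\cong\widetilde{V}_0[1]\{1\}$ by Lemma~\ref{lemma-tensor-simples}. Threading these through, $\mf{U}_i\circ\mf{U}_j\circ\mf{U}_i$ becomes $\cup_i$ tensored with $(\widetilde{V}_0[1]\{1\})^{\otimes 2}$ and then $\cap_i$, with the accumulated shifts $([-1]\{-1\})^{\otimes 3}$ from the three $\mf{U}$'s and the $[2]\{2\}$ from the biadjunction bookkeeping cancelling exactly, leaving $\mf{U}_i$. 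The main obstacle I expect is not any single isomorphism but the careful accounting of homological shifts $[\pm1]$ versus grading shifts $\{\cdot\}$ throughout — in particular making sure the $\widetilde{V}_{p-2}$'s appearing in Lemma~\ref{lemma-tensor-simples} genuinely collapse to the claimed shifts of $\widetilde{V}_0$ in the derived category (this uses that $\widetilde{V}_{p-2}\cong\widetilde{V}_0[1]\{p\}$, equivalently the description of $[1]$ via tensoring with $\widetilde{V}_{p-2}\{-p\}$) and that the associativity constraints of the derived tensor product are coherent, so that these functor isomorphisms are genuine and not merely isomorphisms on objects. A secondary subtlety is that all isomorphisms should be realized by explicit $p$-DG bimodule maps (so that they are natural), which is why Lemma~\ref{lemma-tensor-simples} is stated at the level of $p$-complexes computed from the NY resolution rather than merely in the Grothendieck group.
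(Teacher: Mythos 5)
Your treatment of the self-biadjointness, of (i), and of (iii) is essentially the paper's own argument: the biadjunction is composed from Lemma~\ref{lemma-adjoints}, and the functor isomorphisms follow by writing $\mf{U}_{i}$ as the bimodule $L_i\o{_iL}[-1]\{-1\}$, using associativity of the derived tensor product to isolate the middle factors ${_jL}\o^{\mathbf{L}}_{A_n^!}L_i$, and then substituting the computation of Lemma~\ref{lemma-tensor-simples}; your shift bookkeeping in (i) and (iii) is correct and matches the paper's displayed computation for (i).

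The problem is your part (ii). You assert that ``${_jL}\o^{\mathbf{L}}_{A_n^!}L_i\cong 0$ is false here,'' but this vanishing is precisely the third case of Lemma~\ref{lemma-tensor-simples} (for $|i-j|>1$ the simples $L_i$, $L_j$ have no common composition factors with the NY resolution terms $P_{i-1},P_i,P_{i+1}$, so the tensor product is acyclic), and it is exactly what makes (ii) immediate: $\mf{U}_i\circ\mf{U}_j\cong L_i\o\bigl({_iL}\o^{\mathbf{L}}_{A_n^!}L_j\bigr)\o{_jL}\o^{\mathbf{L}}_{A_n^!}(-)[-2]\{-2\}\cong 0\cong\mf{U}_j\circ\mf{U}_i$. (This is consistent with decategorification: in the second-highest weight space the Temperley--Lieb generators satisfy $u_iu_j=0=u_ju_i$ for $|i-j|>1$, not merely commutativity.) Your replacement argument is internally inconsistent --- you simultaneously claim that $\cap_j\circ\cup_i$ and $\cap_i\circ\cup_j$ vanish (which is equivalent to the vanishing you deny) and that both composites are isomorphic to a nonzero ``parallel'' bimodule $L_i\o{_iL}\o L_j\o{_jL}$. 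Moreover that bimodule, with tensor products over $\Bbbk$, does not represent either composite: the composite is governed by $L_i\o\bigl({_iL}\o^{\mathbf{L}}_{A_n^!}L_j\bigr)\o{_jL}$, where the inner tensor is over $A_n^!$. Replace this portion of the argument by a direct appeal to the $|i-j|>1$ case of Lemma~\ref{lemma-tensor-simples}, and the proof is complete and agrees with the paper's.
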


The theorem tells us that there is a categorical $TL_n$, the $n$-strand Temperley-Lieb algebra, action on the derived category $\mc{D}^c(A_n^!,\dif_\lambda)$, $\lambda=0,1$. Before giving its proof, let us recall that $TL_n$ can be graphically depicted by locally generated string diagrams as above for $\mf{U}_i$, with the product structure
\[
\begin{DGCpicture}
\DGCstrand(0,0)(0,1.5)
\DGCstrand(0.5,0)(0.5,1.5)
\DGCstrand(1.5,0)(1.5,1.5)
\DGCcoupon*(0.6,0.1)(1.4,0.4){$\dots$}
\DGCcoupon*(0.6,1.1)(1.4,1.4){$\dots$}
\DGCcoupon(-0.1,0.5)(1.6,1){$x$}
\end{DGCpicture}
\ \cdot \
\begin{DGCpicture}
\DGCstrand(0,0)(0,1.5)
\DGCstrand(0.5,0)(0.5,1.5)
\DGCstrand(1.5,0)(1.5,1.5)
\DGCcoupon*(0.6,0.1)(1.4,0.4){$\dots$}
\DGCcoupon*(0.6,1.1)(1.4,1.4){$\dots$}
\DGCcoupon(-0.1,0.5)(1.6,1){$y$}
\end{DGCpicture}
\ = \
\begin{DGCpicture}[scale={1,0.75}]
\DGCstrand(0,0)(0,2)
\DGCstrand(0.5,0)(0.5,2)
\DGCstrand(1.5,0)(1.5,2)
\DGCcoupon*(0.6,0.1)(1.4,0.2){$\dots$}
\DGCcoupon*(0.6,0.9)(1.4,1.1){$\dots$}
\DGCcoupon*(0.6,1.8)(1.4,1.9){$\dots$}
\DGCcoupon(-0.1,0.3)(1.6,0.8){$y$}
\DGCcoupon(-0.1,1.2)(1.6,1.7){$x$}
\end{DGCpicture}
\ ,
\]
for any $x,y\in TL_n$. The defining relations are presented as local isotopy simplifications, and the evaluation of circle relation
\[
\begin{DGCpicture}
\DGCbubble(0,0){0.45}
\end{DGCpicture}
\ = \
-q -q^{-1}.
\]
On the Grothendieck group level, if we denote the symbols of the functors $\mf{U}_i$ by $u_i:=[\mf{U}_i]$, then $u_i$ are hermitian operators on $K_0(A_n^!,\dif_\lambda)$ with respect to the hermitian form
$$[\RHOM_{A_n^!}(-,-)]:K_0(A_n^!)\times K_0(A_n^!)\lra \mathbb{O}_p.$$

\begin{proof}
The biadjunction follows from Lemma \ref{lemma-adjoints}. The functor isomorphisms are direct consequences of Lemma \ref{lemma-tensor-simples}. For instance, we check $(i)$ as follows. For any $M\in \mc{D}^c(M,\dif_\lambda)$, there are functorial-in-$M$ isomorphisms
\begin{align*}
\mf{U}_i \circ \mf{U}_i (M) & \cong (L_i\o {_iL}[-1]\{-1\})\o_{A_n^!}^{\mathbf{L}}(L_i\o {_iL}[-1]\{-1\})\o_{A_n^!}^{\mathbf{L}} (M)\\
& \cong L_i\o ({_iL}\o_{A_n^!}^{\mathbf{L}}L_i[-2]\{-2\})\o ({_iL}\o_{A_n^!}^{\mathbf{L}} M)\\
& \cong L_i \o (\Bbbk[-2]\{-2\}\oplus \Bbbk)\o({_iL}\o_{A_n^!}^{\mathbf{L}} M)\\
& \cong (L_i \o \Bbbk[-2]\{-2\}\o{_iL})\o_{A_n^!}^{\mathbf{L}} M \oplus (L_i\o \Bbbk\o{_iL})\o_{A_n^!}^{\mathbf{L}} M\\
& \cong \mf{U}_i[-1]\{-1\}(M) \oplus \mf{U}_i[1]\{1\}(M).
\end{align*}
where in the third isomorphism, we have used Lemma \ref{lemma-tensor-simples}.
\end{proof}

\paragraph{Braid group action.} Now we construct a categorical braid group action on $\mc{D}(A_n^!,\dif_\lambda)$, where $\lambda \in \{0,1\}$. We will construct, for each braid group generators $t_i$, a $p$-complex of $p$-DG bimodule over $A_n^!$. These bimodules are $p$-DG analogues of the bimodules used by Khovanov-Seidel~\cite{KS} for the Koszul dual algebra. Alternatively, it is the analogue of Webster's braiding bimodule~\cite{Web4}, which exists in much greater generality, in the $p$-DG context. We will show that these bimodules, regarded as functors on the derived categories, satisfy the braid group relations.

\begin{defn}\label{def-twist-functors}For each $i\in \{1,\dots, n-1\}$, we define the functors $\mf{T}_i$ and $\mf{T}_i^\prime$ as follows.
\begin{enumerate}
\item[(i)] The functor $\mf{T}_i: \mc{D}^c(A_n^!)\lra \mc{D}^c(A_n^!)$ is given by the derived tensor product with the cocone of the bimodule adjunction map \eqref{eqn-biadjunction-4}
\[
 A_n^! \xrightarrow{\lambda_2[-1]} \RHOM_\Bbbk (\RHOM_{A_n^!}(A_n^!,{L_i}),{L_i}) [-1],
\]

\item[(ii)]The functor $\mf{T}_i^\prime: \mc{D}^c(A_n^!)\lra \mc{D}^c(A_n^!)$ is given by the derived tensor product with the cone of the bimodule adjunction map \eqref{eqn-biadjunction-1}:
\[
L_i \o \RHOM_{A_n^!}(L_i,A_n^!)[1] \xrightarrow{\lambda_1[-1]} A_n^!,
\]
\end{enumerate}
\end{defn}

More explicitly, given any $M\in \mc{D}^c(A_n^!)$, we have filtered $p$-DG modules over $A_n^!$
\begin{align*}
\mf{T}_i(M) & = M \xrightarrow{\lambda_2[-1]} \RHOM_\Bbbk (\RHOM_{A_n^!}(M,{L_i}),{L_i}) [-1]\\
& = \left( M \xrightarrow{-\lambda_2} \RHOM_\Bbbk (\RHOM_{A_n^!}(M,{L_i}),{L_i})=\cdots =\RHOM_\Bbbk (\RHOM_{A_n^!}(M,{L_i}),{L_i}) \right),
\end{align*}
and
\begin{align*}
\mf{T}^\prime_i(M) & =L_i \o \RHOM_{A_n^!}(L_i,M)[1] \xrightarrow{\lambda_1[-1]} M\\
 & = \left( L_i \o \RHOM_{A_n^!}(L_i,M)=\cdots = L_i \o \RHOM_{A_n^!}(L_i,M) \xrightarrow{-\lambda_1} M \right).
\end{align*}
See Lemma \ref{lemma-cone-construction} and the discussion after it on the notation adopted here.

\begin{prop}\label{prop-biadjoint-braiding}The functors $\mf{T}_i$ and $\mf{T}_i^\prime$ are inverses of each other. In particular, they are biadjoint functors.
\end{prop}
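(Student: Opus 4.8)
The plan is to show directly that the composite functor $\mf{T}_i \circ \mf{T}_i^\prime$ (and, symmetrically, $\mf{T}_i^\prime \circ \mf{T}_i$) is naturally isomorphic to the identity functor on $\mc{D}^c(A_n^!)$. Since both functors are given by derived tensor product with a $p$-complex of $p$-DG bimodules, it suffices to compute the tensor product of the corresponding bimodule complexes and show the result is quasi-isomorphic to $A_n^!$ as a $p$-DG bimodule. Writing $B_i := L_i \o {}_iL$, the bimodule defining $\mf{T}_i^\prime$ is the cone $(B_i[1] \xrightarrow{\lambda_1[-1]} A_n^!)$ and the bimodule defining $\mf{T}_i$ is the cocone $(A_n^! \xrightarrow{\lambda_2[-1]} B_i[-1])$; note both use the identification $\RHOM_{A_n^!}(L_i,A_n^!) \cong {}_iL[-2]\{-2\}$ from Corollary \ref{cor-dual-mod-iso-right-mod} and $\RHOM_\Bbbk(\RHOM_{A_n^!}(A_n^!,L_i),L_i) \cong L_i \o {}_iL$.

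First I would form the derived tensor product of these two bimodule complexes over $A_n^!$. Using Lemma \ref{lemma-cone-construction}, the composite is represented by a small "double complex" of $p$-DG bimodules whose constituents are $A_n^! \o_{A_n^!}^{\mathbf L} A_n^! \cong A_n^!$, the two "mixed" terms $A_n^! \o_{A_n^!}^{\mathbf L} B_i$ and $B_i \o_{A_n^!}^{\mathbf L} A_n^!$, both isomorphic to $B_i$ up to shifts, and the term $B_i \o_{A_n^!}^{\mathbf L} B_i = L_i \o ({}_iL \o_{A_n^!}^{\mathbf L} L_i) \o {}_iL$. The crucial computational input is Lemma \ref{lemma-tensor-simples}, which gives ${}_iL \o_{A_n^!}^{\mathbf L} L_i \cong \widetilde{V}_0[2]\{2\} \oplus \widetilde{V}_0$; hence $B_i \o_{A_n^!}^{\mathbf L} B_i \cong B_i[2]\{2\} \oplus B_i$. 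After inserting the appropriate homological and internal grading shifts coming from the cone/cocone presentations, the $B_i$-terms should cancel in pairs against the mixed terms, leaving only $A_n^!$; concretely, one identifies the connecting maps between these summands as the counit $\epsilon$ and unit $\eta$ of the biadjunction (\eqref{eqn-biadjunction-1}--\eqref{eqn-biadjunction-4}), and checks that the zig-zag/triangle identities for $(\cup_i, \cap_i[-2]\{-2\})$ force the $B_i$ contributions to split off as an acyclic (indeed contractible) $p$-DG bimodule summand.

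The key identity to verify, which is the heart of the argument, is that the composite of the unit and counit bimodule maps $B_i \xrightarrow{\sim} A_n^! \o_{A_n^!} B_i$, followed through the tensor-product complex, equals (up to an invertible scalar) the identity of $B_i$ inside the relevant summand; equivalently, that $\epsilon_1$ and $\eta_1$ satisfy one of the two triangle identities of the adjunction. This in turn can be reduced, using that $L_i$ and ${}_iL$ are cofibrant (via the NY resolution of Lemma \ref{lemma-ny-resolution}), to a concrete computation: $\eta_1$ is the inclusion of $\Bbbk$ into the first factor of $\RHOM_{A_n^!}(L_i,L_i) \cong \widetilde{V}_0 \oplus \widetilde{V}_0\{2p-2\}$ (Corollary \ref{cor-RHOM-of-simples}), while $\epsilon_1$ is the natural pairing, and the composite $L_i \to L_i \o {}_iL \o L_i \to L_i$ is multiplication by a nonzero element of $\Bbbk$, which one reads off from the explicit idempotent bookkeeping after footnote to \eqref{eqn-biadjunction-1}. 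Finally, "inverses" immediately implies "biadjoint": $\mf{T}_i'$ is both left and right adjoint to $\mf{T}_i$ because it is a two-sided inverse, so the natural transformations $\Id \Rightarrow \mf{T}_i' \mf{T}_i$ and $\mf{T}_i \mf{T}_i' \Rightarrow \Id$ serve simultaneously as unit/counit for both adjunctions.

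I expect the main obstacle to be the careful bookkeeping of the homological shifts $[\pm 1]$ and internal $q$-grading shifts $\{\pm 1\}$ across the derived tensor product: because $[2] \cong \{-2p\}$ only up to the functorial isomorphism \eqref{eqn-hom-shift-two-equals-grading-shift}, and because the cone/cocone conventions of Lemma \ref{lemma-cone-construction} introduce $[1]$'s repeated $p-1$ times, one must track these precisely to see the $B_i$-summands genuinely cancel rather than merely cancel in the Grothendieck group. A clean way to organize this is to model the composite as an iterated cone and repeatedly apply the octahedral axiom together with the fact that $B_i \o_{A_n^!}^{\mathbf L} B_i \cong B_i \oplus B_i[2]\{2\}$ to peel off contractible pieces; the residual object is then forced to be $A_n^!$ by a rank count in $K_0$ plus the explicit description of the surviving map.
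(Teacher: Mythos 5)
Your proposal is correct and takes essentially the same route as the paper's proof: both represent $\mf{T}_i\circ\mf{T}_i'$ as the totalization of a $2\times 2$ square of $p$-DG bimodules, use Lemma \ref{lemma-tensor-simples} to split $(L_i\o{_iL})\o_{A_n^!}^{\mathbf{L}}(L_i\o{_iL})\cong (L_i\o{_iL})\oplus (L_i\o{_iL})[2]\{2\}$, and peel off contractible cone-of-identity pieces until only $A_n^!$ remains. The only difference is presentational: where you invoke a triangle identity for the unit/counit to see that the surviving map $L_i\o{_iL}\to L_i\o{_iL}$ is an isomorphism, the paper verifies the same nondegeneracy by a direct chase of the maps $\lambda_1$, $\lambda_2$.
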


\begin{proof}For computational convenience, we identify the $p$-DG bimodules
\begin{gather*}
\RHOM_\Bbbk (\RHOM_{A_n^!}(A_n^!,{L_i}),{L_i})\cong L_i\o {_iL}, \\
L_i \o \RHOM_{A_n^!}(L_i,A_n^!)\cong \mathbf{p}(L_i)\o {_iL[-2]\{-2\}},
\end{gather*}
and we compute the tensor product of functors as tensor product of bimodules. For instance, $\mf{T}_i\circ \mf{T}_i^\prime$ is represented by
collapsing the cube
\[
\xymatrix{
(L_i\o {_iL})[-1]\o_{A_n^!}(\mathbf{p}(L_i)\o {_iL[-2]\{-2\}})[1] \ar[rr]^-{\Id\o \lambda_1[-1]}&& (L_i\o {_iL})[-1]\o_{A_n^!}A_n^!\\
A_n^!\o_{A_n^!}(\mathbf{p}(L_i)\o {_iL[-2]\{-2\}})[1]\ar[u]^{\lambda_2[-1]\o \Id}\ar[rr]^-{\Id\o \lambda_1[-1]}&& A_n^!\o_{A_n^!}A_n^!~.\ar[u]_{\lambda_2[-1]\o\Id}
}
\]
along the southwest-northeast direction into a single $p$-complex of bimodules. Using Lemma \ref{lemma-tensor-simples}, we identify the terms on the left vertical arrow with
\begin{gather}
(L_i\o {_iL})[-1]\o_{A_n^!}(\mathbf{p}(L_i)\o {_iL[-2]\{-2\}})[1] \cong (L_i\o {_iL})\oplus (L_i\o {_iL})[-2]\{-2\},\label{eqn-term1}\\
(A_n^!\o_{A_n^!}\mathbf{p}(L_i)\o {_iL[-2]\{-2\}})[1] \cong L_i\o {_iL}[-1]\{-2\}\label{eqn-term2}
\end{gather}
Via a chasing of the definitions, one recognizes that part of the top arrow, generated by the second summand on the right of \eqref{eqn-term1}, as being quasi-isomorphic to the contractible complex of $p$-DG bimodules,
\[
L_i\o {_iL} \xrightarrow{\Id[-1]} L_i\o {_iL}[-1].
\]
Modulo this subcomplex, the cube contains $A_n^!$ as a sub-bimodule sitting in the southeast corner, which lies in $q$-degree zero. Further quotienting out this copy of $A_n^!$, the complex of bimodules leftover comes from the quotient of the left vertical arrow, and it is quasi-isomorphic to the contractible cocone
\[
L_i\o {_iL}[-1]\{-2\} \xrightarrow{\Id[-1]} L_i[-2]\{-2\}.
\]
It follows that the composition functor is quasi-isomorphic to $\mf{T}_i\circ \mf{T}_i^\prime \cong A_n^!\o_{A_n^!}(-)\cong \Id$. The composition in the other way follows by reflecting the above cube diagram about the southwest-northeast diagonal. The last claim is then clear.
\end{proof}

\begin{prop}\label{prop-RIII-braiding}
The derived functors $ \mf{T}_i $ satisfy the Reidemeister-III braid relations.  That is, for $i=1,\ldots,n-1$, there are quasi-isomorphisms of filtered $p$-DG bimodules $ \mf{T}_i \mf{T}_{i\pm1} \mf{T}_i \cong \mf{T}_{i\pm1} \mf{T}_{i} \mf{T}_{i\pm1} $.
\end{prop}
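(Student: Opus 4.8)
The relation is established by an explicit computation with complexes of $p$-DG bimodules over $A_n^!$, in the spirit of Khovanov--Seidel~\cite{KS}; the new ingredient is to check that all the homotopies involved are compatible with the differential $\dif_\lambda$. The argument is uniform in $\lambda\in\{0,1\}$, since the only inputs used are Lemma~\ref{lemma-tensor-simples} and Corollary~\ref{cor-RHOM-of-simples}, which are stated for both values. First note that, after relabelling, the assertions $\mf{T}_i\mf{T}_{i+1}\mf{T}_i\cong\mf{T}_{i+1}\mf{T}_i\mf{T}_{i+1}$ and $\mf{T}_i\mf{T}_{i-1}\mf{T}_i\cong\mf{T}_{i-1}\mf{T}_i\mf{T}_{i-1}$ are the same statement, so it suffices to prove $\mf{T}_i\mf{T}_j\mf{T}_i\cong\mf{T}_j\mf{T}_i\mf{T}_j$ whenever $|i-j|=1$; the convention $P_0={}_0P=0$ of Lemma~\ref{lemma-ny-resolution} handles the boundary cases $i=1$ and $j=n-1$ without change. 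Each $\mf{T}_k$ is the functor $B_k\o_{A_n^!}^{\mathbf{L}}(-)$, where $B_k$ is the two-term $p$-DG bimodule complex obtained as the cocone of the coevaluation $\lambda_2\colon A_n^!\lra L_k\o{}_kL$ of~\eqref{eqn-biadjunction-4} (Lemma~\ref{lemma-cone-construction}). Thus the triple composite $\mf{T}_i\mf{T}_j\mf{T}_i$ is computed by the total complex of the $2\times2\times2$ cube $B_i\o_{A_n^!}B_j\o_{A_n^!}B_i$, where one replaces a factor $L_k\o{}_kL$ by $\mathbf{p}(L_k)\o{}_kL$ whenever a derived tensor product is needed; as in the proof of Lemma~\ref{lemma-adjoints}, it is enough to resolve one variable at a time.

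Next I would evaluate the eight vertices of this cube using Lemma~\ref{lemma-tensor-simples}. The vertex with no $L\o{}L$ factors is $A_n^!$; the three weight-one vertices are $L_i\o{}_iL$, $L_j\o{}_jL$, $L_i\o{}_iL$; two of the weight-two vertices collapse, via ${}_iL\o_{A_n^!}^{\mathbf{L}}L_j\cong\widetilde{V}_0[1]\{1\}$, to $L_i\o{}_jL$ and $L_j\o{}_iL$ up to shift; the remaining weight-two vertex collapses, via ${}_iL\o_{A_n^!}^{\mathbf{L}}L_i\cong\widetilde{V}_0\oplus\widetilde{V}_0[2]\{2\}$, to $L_i\o{}_iL\oplus L_i\o{}_iL[2]\{2\}$; and the top vertex collapses to $L_i\o{}_iL[2]\{2\}$. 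Inserting these identifications, several edge maps become shifts of identity bimodule maps, so the corresponding two-term subcomplexes are contractible and may be split off by Gaussian elimination. The essential point is that this elimination takes place in the category of $p$-DG bimodules: the contracting homotopies must intertwine $\dif_\lambda$. This holds because every edge of the cube is assembled from the bimodule maps $\lambda_1,\lambda_2$ of~\eqref{eqn-biadjunction-1}--\eqref{eqn-biadjunction-4}, which intertwine $\dif_\lambda$, together with composition-of-paths maps such as $L_i\o{}_iL\to L_i\o{}_jL$ and $L_j\o{}_jL\to L_i\o{}_jL$, whose $\dif_\lambda$-equivariance is checked exactly as in the proof of Lemma~\ref{lemma-ny-resolution}. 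After all cancellations $\mf{T}_i\mf{T}_j\mf{T}_i$ reduces to a short complex of $p$-DG bimodules whose nonzero terms lie among $A_n^!$, $L_i\o{}_iL$, $L_j\o{}_jL$ and $L_i\o{}_jL$, with the $q$- and homological shifts forced by Remark~\ref{rmk-deg-shifts}.

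I would then run the mirror-image computation for $\mf{T}_j\mf{T}_i\mf{T}_j$, which by the $i\leftrightarrow j$ symmetry of the cube reduces to the analogous short complex with $L_j\o{}_iL$ in place of $L_i\o{}_jL$. Since $L_i\o{}_jL$ and $L_j\o{}_iL$ are \emph{not} isomorphic as bimodules (their left module structures are $P_i$ and $P_j$, respectively), the final step is to produce an explicit homotopy equivalence of $p$-DG bimodule complexes between the two reductions; this is the genuine content of the Reidemeister-III relation. Here one uses that the ``corner'' composite bimodule maps appearing in both complexes factor through a common bimodule built from the paths of length at most $3$ supported on $\{i-1,i,i+1,i+2\}$, together with the fact --- noted after~\eqref{eqn-biadjunction-4} --- that $\lambda_1$ and $\lambda_2$ are determined up to an invertible scalar by their values on idempotents, which pins down the comparison maps and their $\dif_\lambda$-equivariance.

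The main obstacle will be the bookkeeping in the Gaussian-elimination step: confirming that each cancelled summand is contractible \emph{as a $p$-DG bimodule} and not merely as a complex of bimodules, transporting the perturbed differential correctly onto the surviving terms, and tracking signs and the $q$-degree shifts of Remark~\ref{rmk-deg-shifts}. Because the underlying complexes of ungraded bimodules coincide with those of Khovanov--Seidel~\cite{KS}, the only truly new work is this $\dif_\lambda$-equivariance; a useful consistency check is that on Grothendieck groups both composites must act as the matrix $t_it_{i+1}t_i=t_{i+1}t_it_{i+1}$ of~\eqref{twistingdualcan1} on $K_0(A_n^!,\dif_\lambda)$, which fixes every shift that appears. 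As an alternative, or a cross-check, one can first verify the relation object-wise by computing $\mf{T}_i\mf{T}_j\mf{T}_i(L_k)$ for each $k$ from Corollary~\ref{cor-RHOM-of-simples} --- it is a shift of $L_k$ when $k$ is non-adjacent to both $i$ and $j$, and a small explicit two- or three-term complex otherwise --- though promoting such object-wise isomorphisms to an isomorphism of functors ultimately requires the same bimodule bookkeeping.
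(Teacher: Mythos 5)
Your overall strategy is the same as the paper's (form the cube of the two-term bimodules, identify the vertices via Lemma \ref{lemma-tensor-simples}, and strip off acyclic filtered pieces), but the step you wave through with ``several edge maps become shifts of identity bimodule maps'' is precisely where the real content lies, and your proposal gives no argument for it. After the identifications of Lemma \ref{lemma-tensor-simples}, it is not automatic that the component of $d_2$ from the summand $L_i\o {}_iL[2]\{2\}$ of $L_i\o{}_iL\o_A A\o_A L_i\o{}_iL$ into the top vertex $L_i\o{}_iL\o_A^{\mathbf{L}}L_{i+1}\o{}_{i+1}L\o_A^{\mathbf{L}}L_i\o{}_iL\cong L_i\o{}_iL[2]\{2\}$ is nonzero; if it vanished, the corresponding two-term piece would not be acyclic and the reduction would come out differently, so this cannot be settled by degree reasons or by appealing to $\dif_\lambda$-equivariance of $\lambda_1,\lambda_2$. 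The bulk of the paper's proof is devoted to exactly this verification: one passes to the NY resolutions and chases the diagrams \eqref{boundary5to7}, \eqref{commtriangle2}, \eqref{commtriangle3} to see that $\rho_1,\rho_2\neq 0$ and that the composite sends $(i)\o 1\o(i)[2]\{2\}$ to $(i+1)\o(i+1)[1]\{1\}\o(i+1)\o(i+1)[1]\{1\}$, using that the unit \eqref{eqn-biadjunction-4} takes $(i+1)$ to $(i+1)\o(i+1)$. Omitting this is a genuine gap, not bookkeeping.

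The second problem is that your predicted outcome of the reduction is wrong, and your planned final step is aimed at it. Both mixed bimodules survive: neither $L_i\o{}_{i+1}L$ nor $L_{i+1}\o{}_iL$ has anything to cancel against, and after removing the two acyclic pieces one lands on the complex \eqref{symmetricpart}, which contains $A$, $L_i\o{}_iL[-1]$, $L_{i+1}\o{}_{i+1}L[-1]$, and \emph{both} $L_i\o{}_{i+1}L[-1]\{1\}$ and $L_{i+1}\o{}_iL[-1]\{1\}$. This complex is manifestly symmetric under $i\leftrightarrow i+1$, so the Reidemeister-III relation follows immediately; the extra ``homotopy equivalence between the two reductions'' that you call the genuine content of the relation is not needed, and as sketched (factoring through a common bimodule of short paths) it is only a heuristic. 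Two smaller cautions: in the $p$-DG setting the cancellation is not literal Gaussian elimination with homotopies, but the paper's mechanism of exhibiting acyclic sub- or quotient filtered $p$-DG bimodules such as $M\xrightarrow{\Id[-1]}M[-1]$ and using that the corresponding inclusion or projection is a quasi-isomorphism; and the surviving copy of $L_i\o{}_iL[-1]$ in \eqref{symmetricpart} is a \emph{diagonal} inside the two outer weight-one vertices, chosen so that together with $A$ it forms a $p$-DG submodule --- a filtration subtlety your sketch does not register.
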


\begin{proof}
Throughout the course of this proof we will write $ A $ for $ A_n^!$ and simply write the derived tensor symbol for $\o_{A_n^!}^{\mathbf{L}}$ as $\o_A^{\mathbf{L}}$.
Recall that $ \mf{T}_i$, by construction, is the tensor product functor with the filtered $p$-DG bimodule $  A \rightarrow L_i \otimes {}_i L [-1]$.
We identify the composition $ \mf{T}_i \mf{T}_{i+1} \mf{T}_i $ as the (derived) tensor-product functor with the filtered $p$-DG bimodule
\begin{equation}
\label{tensorcube}
\xymatrix{
X_{0} \ar[r]^{d_{0}} & X_{1} \ar[r]^{d_{1}} & X_{2} \ar[r]^{d_{2}} & X_3 ,
}
\end{equation}
where $ d_{0} \colon X_{0} \rightarrow X_{1} $ is given by
\begin{equation*}
\xymatrix{
&& L_i \otimes {}_i L \otimes_A A \otimes_A A [-1]  \\
A \otimes_A A \otimes_A A \ar[rr] \ar[urr] \ar[drr] && A \otimes_A L_{i+1} \otimes {}_{i+1} L \otimes_A A[-1]  , \\
&& A \otimes_A A \otimes_A  L_i \otimes {}_i L [-1]
}
\end{equation*}
$ d_1 \colon X_1 \rightarrow X_2 $ is given by
\begin{equation*}
\xymatrix{
L_i \otimes {}_i L \otimes_A A \otimes_A A [-1] \ar[r] \ar[dr] & L_i \otimes {}_i L \otimes_A L_{i+1} \otimes {}_{i+1} L \otimes_A A [-2]  \\
A \otimes_A L_{i+1} \otimes {}_{i+1} L \otimes_A A[-1] \ar[r] \ar[ur] \ar[dr]& L_i \otimes {}_i L \otimes_A A \otimes_A L_i \otimes {}_i L[-2]   ,\\
A \otimes_A A \otimes_A  L_i \otimes {}_i L [-1] \ar[r] \ar[ur] & A \otimes_A L_{i+1} \otimes {}_{i+1} L \otimes_A  L_i \otimes {}_i L [-2]
}
\end{equation*}
$ d_2 \colon X_2 \rightarrow X_3 $ is given by
\begin{equation*}
\xymatrix{
 L_i \otimes {}_i L \otimes_A^\mathbf{L} L_{i+1} \otimes {}_{i+1} L \otimes_A A [-2] \ar[dr]  & \\
L_i \otimes {}_i L \otimes_A A \otimes_A L_i \otimes {}_i L[-2]\ar[r] & L_i \otimes {}_i L \otimes_A^\mathbf{L} L_{i+1} \otimes {}_{i+1} L \otimes_A^\mathbf{L} L_i \otimes {}_i L [-3]. \\
A \otimes_A L_{i+1} \otimes {}_{i+1} L \otimes_A^\mathbf{L} L_i \otimes {}_i L [-2]  \ar[ur] &
}
\end{equation*}
The restriction
\begin{equation*}
d_2 \colon L_i \otimes {}_i L \otimes_A A \otimes_A L_i \otimes {}_i L[-2] \rightarrow L_i \otimes {}_i L \otimes_A^\mathbf{L} L_i \otimes {}_i L \otimes_A^\mathbf{L} L_i \otimes {}_i L [-3]
\end{equation*}
arises as tensoring the (co)cone of the dotted arrow in the top row of the diagram
\begin{equation}
\label{boundary5to7}
\begin{gathered}
\xymatrix @C=0.95em{
 {}_i L \otimes_A A \otimes_A L_i \ar[d]^{\cong} \ar@{.>}[r]& {}_i L \otimes_A^\mathbf{L} L_{i+1} \otimes {}_{i+1} L \otimes_A^\mathbf{L} L_i  &\\
  {\bf p}({}_i L) \otimes_A A \otimes_A {\bf p}(L_i) \ar[r]  & {\bf p}({}_i L) \otimes_A L_{i+1} \otimes {}_{i+1} L \otimes_A {\bf p}(L_i)  \ar[u]^{\cong} \ar[r]^-{ \cong} & \Bbbk[1]\{1\} \otimes \Bbbk[1]\{1\} \ar[d]^-{\cong}\\
 {\Bbbk\oplus \Bbbk[2]\{2 \}}\ar[u]^-{\cong} \ar[rr]^{\pi} &&\Bbbk[2] \{ 2\}
}
\end{gathered}
\end{equation}
on the left and on the right by $ {L_i} $ and $ {{}_iL} $ respectively.

We now show that the map of $p$-complexes $\pi$ is non-zero.  Then for degree reasons it must be the projection onto the second factor.  To do this, we recall that the resolutions $ {\bf p}(L_i) $ and $ {\bf p}({}_i L) $ fit into the short exact sequences of $ A_{\partial}$-modules
\begin{equation*}
\xymatrix{
0 \ar[r] & (P_{i+1}[1] \{1\} \rightarrow P_i) \ar[r] & {\bf p}(L_i) \ar[r] & (P_{i}[2] \{2\} \rightarrow P_{i-1}[1]\{1\})\ar[r] & 0
}
\end{equation*}
\begin{equation*}
\xymatrix{
0 \ar[r] & ({}_{i-1} P [1] \{1\} \rightarrow {}_i P) \ar[r] & {\bf p}({}_i L) \ar[r] & ({}_i P [2] \{2\} \rightarrow {}_{i+1} P [1]\{1\}) \ar[r] & 0.
}
\end{equation*}
There are also short exact sequences
\begin{equation*}
\xymatrix{
0 \ar[r] & (P_{i+2} \rightarrow P_{i+1}[-1]\{-1\}) \ar[r] & {\bf p}(L_{i+1})[-1]\{-1\} \ar[r] & (P_{i+1}[1] \{1\} \rightarrow P_{i})\ar[r] & 0
}
\end{equation*}
\begin{equation*}
\xymatrix{
0 \ar[r] & ({}_{i} P [2] \{2\} \rightarrow {}_{i+1} P[1]\{1\}) \ar[r] & {\bf p}({}_{i+1} L)[1]\{1\} \ar[r] & ({}_{i+1} P [3] \{3\} \rightarrow {}_{i+2} P [2]\{2\}) \ar[r] & 0.
}
\end{equation*}
These short exact sequences give rise to a commutative diagram
\begin{equation}
\label{commtriangle1}
\begin{gathered}
\xymatrix{
{\bf p}(L_{i+1})[-1]\{-1\} \ar[rr] \ar[dr] & & {\bf p}(L_i) \\
& (P_{i+1}[1] \{1\} \rightarrow P_i) \ar[ur] &
}
\end{gathered}
\end{equation}
Tensoring ~\eqref{commtriangle1} on the left by $ {}_{i+1} L $ produces the commutative triangle in ~\eqref{commtriangle2}
\begin{equation}
\label{commtriangle2}
\begin{gathered}
\xymatrix @C=0.85pc{
{\begin{matrix} \Bbbk(i+1) \otimes (i+1)[-1]\{-1\} \\ \oplus \\ \Bbbk(i+1) \otimes (i+1)[1]\{1\} \end{matrix}} \ar[rr] & &  \Bbbk(i+1) \otimes (i+1)[1]\{1\} \\
{}_{i+1} L \otimes_A  {\bf p}(L_{i+1})[-1]\{-1\} \ar[rr]^-{\rho_1} \ar[dr] \ar[u]^-{\cong} & & {}_{i+1} L \otimes_A {\bf p}(L_i) \ar[u]^-{\cong} \\
& {\Bbbk}(i+1) \otimes (i+1) [1]\{1\}   \ar[ur] &
}
\end{gathered}
\end{equation}
where the map in the top row is projection.
Tensoring ~\eqref{commtriangle1} on the left by $ {}_{i} L $ produces the commutative triangle in ~\eqref{commtriangle3}
\begin{equation}
\label{commtriangle3}
\begin{gathered}
\xymatrix{
\Bbbk(i) \otimes (i) \ar[rr] & & {\begin{matrix} \Bbbk(i) \otimes (i) \\ \oplus \\ \Bbbk(i) \otimes (i)[2]\{2\} \end{matrix}} \\
{}_{i} L \otimes_A  {\bf p}(L_{i+1})[-1]\{-1\} \ar[rr]^-{\rho_2} \ar[dr] \ar[u]^-{\cong} & & {}_{i} L \otimes_A {\bf p}(L_i) \ar[u]_-{\cong } \\
& {\Bbbk}(i) \otimes (i)    \ar[ur] &
}
\end{gathered}
\end{equation}
where the map in the top row is now inclusion. In particular, we deduce that $\rho_1$, $\rho_2$ are non-zero maps, and they will also be non-zero if we replace $_{i+1}L$ or $_iL$ by their resolutions.

Now expanding ~\eqref{boundary5to7} we see the map is non-zero:

\begin{equation*}
\begin{gathered}
\xymatrix@C=.7em{
 {\bf p}({}_i L) \otimes_A A \otimes_A {\bf p}(L_i)  \ar[r] & {\bf p}({}_i L) \otimes_A L_{i+1} \otimes {}_{i+1} L \otimes_A {\bf p}(L_i) \\
  {\bf p}({}_i L) \otimes_A A \otimes_A {\bf p}(L_{i+1}) [-1] \{-1\} \ar[r] \ar[u]^{\rho_2 \neq 0} & {\bf p}({}_i L) \otimes_A L_{i+1} \otimes {}_{i+1} L \otimes_A {\bf p}(L_{i+1}) [-1]\{-1\} \ar[u]^{\Id \o \rho_1 \neq 0} \\
  {\bf p}({}_i L) \otimes_A A \otimes_A L_{i+1} [-1] \{-1\} \ar[u]^{\cong} & \Bbbk(i+1) \otimes (i+1) \otimes (i+1) \otimes (i+1) \ar[u]^{\cong} \\
 \Bbbk (i+1) \otimes (i+1) \otimes (i+1) \ar[ur]^{\neq 0} \ar[u]^{\cong}. &
}
\end{gathered}
\end{equation*}
Here the slanted arrow is non-zero since the bimodule map $A\lra L_{i+1}\o {_{i+1}L}$ given in \eqref{eqn-biadjunction-4} sends the idempotent $(i+1)\in A$ to $(i+1)\o (i+1)\in L_{i+1}\o {_{i+1}L}$.
Consequently, after replacing $\mathbf{p}(L_i),~\mathbf{p}({_iL})$ by the simples, the map of $p$-DG bimodules has the effect
\begin{equation*}
{}_i L \otimes_A A \otimes_A L_i \rightarrow {}_i L \otimes_A^\mathbf{L} L_{i+1} \otimes {}_{i+1} L \otimes_A^\mathbf{L} L_i \cong {\bf p}({}_i L) \otimes_A L_{i+1} \otimes {}_{i+1} L \otimes_A {\bf p}(L_i)
\end{equation*}
\begin{equation*}
(i) \otimes 1 \otimes (i) [2] \{ 2\} \mapsto (i+1) \otimes (i+1) [1]\{1\} \otimes (i+1) \otimes (i+1)[1]\{1\}.
\end{equation*}
It follows that the subcomplex $ d_2 \colon X_2 \rightarrow X_3 $ of ~\eqref{tensorcube} contains an acyclic subcomplex of $p$-DG bimodules
$L_i \otimes {}_i L\{ 2\} \stackrel{[-1]}{\lra} L_i \otimes {}_i L\{ 2\} [-1]$:
\begin{equation*}
\xymatrix{
L_i \otimes {}_i L \otimes_A A \otimes_A L_i \otimes {}_i L [-2] \ar[r]^{[-1] \hspace{.4in}} \ar[d]^{\cong} & L_i \otimes {}_i L \otimes_A^\mathbf{L} L_{i+1} \otimes {}_{i+1} L \otimes_A^\mathbf{L} L_i \otimes {}_i L [-2] \ar[d]^{\cong} \\
L_i \otimes {}_i L[-2] \oplus L_i \otimes {}_i L\{ 2\} \ar[r]^-{( 0 ,~ [-1])} & L_i \otimes {}_i L\{ 2\} [-1].
}
\end{equation*}
Modulo this acyclic term, we have another subcomplex of $p$-DG bimodules:
\begin{equation}
\label{symmetricpart}
\begin{gathered}
\xymatrix{
& L_i \otimes {}_i L[-1] \ar[r] \ar[ddr] & L_i \otimes {}_{i+1} L[-1]\{1\} \\
A \ar[ur] \ar[dr] & & \\
& L_{i+1} \otimes {}_{i+1} L [-1] \ar[r] \ar[uur] & L_{i+1} \otimes {}_{i} L [-1] \{1\}
}
\end{gathered}
\end{equation}
where $ L_i \otimes {}_i L[-1] $ is a diagonal submodule of the first and third terms of $ X_1$.  Note that this diagonal does not map into
$ L_i \otimes {}_i L \otimes_A A \otimes_A L_i \otimes {}_i L[-2] $ of $ X_2 $ since $ (i) \otimes (i)[-1] \in L_i \otimes {}_i L [-1] $ comes from $ A $ and with its preimage in $ A $ it already forms a $p$-DG submodule.
Quotienting by this subcomplex, we get another acyclic complex of bimodules
\begin{equation*}
\xymatrix{
L_i \otimes {}_i L [-1] \ar[r]^{[-1]} & L_i \otimes {}_i L [-2].
}
\end{equation*}
Since ~\eqref{symmetricpart} is symmetric in $i$ and $i+1$ we get
$ \mf{T}_i \mf{T}_{i+1} \mf{T}_i \cong \mf{T}_{i+1} \mf{T}_{i} \mf{T}_{i+1} $.
\end{proof}

We now state our main result.

\begin{thm}
\label{thm-braidrelations}
The functors $ \mf{T}_i, \mf{T}_i' $ for $ i=1, \ldots, n-1 $ satisfy braid relations, i.e., there are isomorphisms of functors
\begin{enumerate}
\item $ \mf{T}_i \mf{T}_i' \cong \Id \cong \mf{T}_i' \mf{T}_i $
\item $ \mf{T}_i  \mf{T}_j \cong \mf{T}_j \mf{T}_i $ if $ |i-j|>1 $
\item $ \mf{T}_i \mf{T}_{i+1} \mf{T}_i \cong \mf{T}_{i+1} \mf{T}_{i} \mf{T}_{i+1} $.
\end{enumerate}
\end{thm}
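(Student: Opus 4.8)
The three relations are treated one at a time, and two of the three are in fact already established. Relation (1) is exactly Proposition~\ref{prop-biadjoint-braiding}, which shows that $\mf{T}_i$ and $\mf{T}_i'$ are mutually inverse (and hence biadjoint) functors. Relation (3) for the unprimed generators is Proposition~\ref{prop-RIII-braiding}. Thus the only thing that remains is the far-commutativity relation (2); once it is in hand, the list of relations displayed in the theorem is precisely what has been verified, and any further relations among the primed generators follow formally by composing the verified isomorphisms with inverses $\mf{T}_i'$.

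For (2), fix $i,j$ with $|i-j|>1$. The plan is to compute the composite $\mf{T}_i\mf{T}_j$ as a filtered $p$-DG bimodule over $A_n^!$ and recognize that bimodule as symmetric in $i$ and $j$. Write $\mf{R}_i$ for the endofunctor $(-)\o_{A_n^!}^{\mathbf{L}}(L_i\o {}_iL)$, so that by Definition~\ref{def-twist-functors} and the identification $\RHOM_\Bbbk(\RHOM_{A_n^!}(A_n^!,L_i),L_i)\cong L_i\o {}_iL$, the functor $\mf{T}_i$ is the cocone of the natural transformation $\Id\to \mf{R}_i[-1]$ coming from the adjunction map $\eta_2$ of \eqref{eqn-biadjunction-4}. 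Since the (co)cone construction is compatible with the derived tensor product (Lemma~\ref{lemma-cone-construction} and the discussion following it), $\mf{T}_i\mf{T}_j$ is the total filtered $p$-DG bimodule of the commuting square with corners $A_n^!$, $L_j\o {}_jL[-1]$, $L_i\o {}_iL[-1]$, and $L_i\o {}_iL\o_{A_n^!}^{\mathbf{L}} L_j\o {}_jL[-2]$, the off-diagonal maps being induced by $\eta_2$ for $i$ and for $j$.

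The fourth corner is acyclic: by Lemma~\ref{lemma-tensor-simples}, ${}_iL\o_{A_n^!}^{\mathbf{L}} L_j\cong 0$ whenever $|i-j|>1$, hence $L_i\o {}_iL\o_{A_n^!}^{\mathbf{L}} L_j\o {}_jL\cong L_i\o ({}_iL\o_{A_n^!}^{\mathbf{L}} L_j)\o {}_jL\cong 0$ in the derived category. The corresponding subquotients of the total filtered module therefore form an acyclic sub- or quotient-filtered-module compatible with $\dif_\lambda$, which can be stripped off. What is left is a filtered $p$-DG bimodule built purely from $A_n^!$, $L_i\o {}_iL[-1]$ and $L_j\o {}_jL[-1]$ with the two evident off-diagonal maps, and this description is manifestly unchanged under interchanging $i$ and $j$. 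Hence $\mf{T}_i\mf{T}_j\cong \mf{T}_j\mf{T}_i$; tensoring this isomorphism by $\mf{T}_i'$ and $\mf{T}_j'$ on the appropriate sides also gives $\mf{T}_i'\mf{T}_j'\cong \mf{T}_j'\mf{T}_i'$ and $\mf{T}_i\mf{T}_j'\cong \mf{T}_j'\mf{T}_i$. Collecting relations (1)--(3) yields a categorical $B_n$-action on $\mc{D}^c(A_n^!,\dif_\lambda)$ for $\lambda\in\{0,1\}$.

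I do not expect a genuine obstacle here: the substantive computations are relation (1) and the Reidemeister-III relation (3), already carried out in Propositions~\ref{prop-biadjoint-braiding} and~\ref{prop-RIII-braiding}, while the vanishing of the fourth corner needed for (2) is immediate from Lemma~\ref{lemma-tensor-simples}. The only point requiring care is the bookkeeping of the homological shifts $[\pm 1]$ and $q$-degree shifts in the total-complex presentation of $\mf{T}_i\mf{T}_j$, which one handles following the conventions of Notation~\ref{ntn-finite-cell-mod} and Remark~\ref{rmk-deg-shifts}.
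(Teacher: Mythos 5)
Your proposal is correct and takes essentially the same route as the paper: relations (1) and (3) are exactly Propositions~\ref{prop-biadjoint-braiding} and~\ref{prop-RIII-braiding}, and the paper simply leaves relation (2) as an exercise. Your argument for (2) is the intended one and works: the corner term $L_i\o {}_iL\o_{A_n^!}^{\mathbf{L}}L_j\o {}_jL$ of the tensored cocones is acyclic by Lemma~\ref{lemma-tensor-simples} when $|i-j|>1$, and after stripping this acyclic piece the remaining filtered $p$-DG bimodule $A_n^!\to (L_i\o {}_iL[-1])\oplus (L_j\o {}_jL[-1])$ is manifestly symmetric in $i$ and $j$.
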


\begin{proof} The first and last isomorphisms are proven in Proposition \ref{prop-biadjoint-braiding} and Proposition \ref{prop-RIII-braiding}. The second isomorphism is easy, and we leave it as an exercise to the reader.
\end{proof}

\begin{prop}
In the Grothendieck group we have equalities:
\begin{equation*}
[\mf{T}_i]=[\Id]-q^{p+1}[\mf{U}_i]
\end{equation*}
\begin{equation*}
[\mf{T}_i']=[\Id]-q^{p-1}[\mf{U}_i].
\end{equation*}
\end{prop}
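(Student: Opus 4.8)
The plan is to read both equalities directly off the distinguished triangles that define $\mf{T}_i$ and $\mf{T}_i'$, by applying $K_0$ and keeping careful track of the homological shift $[1]$ versus the internal grading shift $\{1\}$.

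By Definition \ref{def-twist-functors}, $\mf{T}_i$ is $\o^{\mathbf{L}}_{A_n^!}(-)$ with the cocone of the adjunction map \eqref{eqn-biadjunction-4}, whose target we identify with $L_i\o_\Bbbk{_iL}$ as in the proof of Proposition \ref{prop-biadjoint-braiding}, and $\mf{T}_i'$ is $\o^{\mathbf{L}}_{A_n^!}(-)$ with the cone of the adjunction map \eqref{eqn-biadjunction-1}, whose source is $L_i\o\RHOM_{A_n^!}(L_i,A_n^!)\cong(L_i\o_\Bbbk{_iL})[-2]\{-2\}$ by Corollary \ref{cor-dual-mod-iso-right-mod}. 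Since derived tensor product over $A_n^!$ is triangulated in each variable, each of these triangles remains distinguished after $\o^{\mathbf{L}}_{A_n^!}M$ for any $M\in\mc{D}^c(A_n^!)$, giving for the operators induced on $K_0(A_n^!,\dif_\lambda)$
\[
[\mf{T}_i]=[\Id]-\big[\,(L_i\o_\Bbbk{_iL})\o^{\mathbf{L}}_{A_n^!}(-)\,\big],\qquad [\mf{T}_i']=[\Id]-\big[\,(L_i\o_\Bbbk{_iL})[-2]\{-2\}\o^{\mathbf{L}}_{A_n^!}(-)\,\big].
\]
The second ingredient is that $\cup_i\circ\cap_i$ is, by Definition \ref{defn-cup-cap-functors}, exactly $\o^{\mathbf{L}}_{A_n^!}(-)$ with the bimodule $L_i\o_\Bbbk{_iL}$, so $\mf{U}_i=\cup_i\circ\cap_i[-1]\{-1\}$ is $\o^{\mathbf{L}}_{A_n^!}(-)$ with $(L_i\o_\Bbbk{_iL})[-1]\{-1\}$; thus the endofunctor attached to $L_i\o_\Bbbk{_iL}$ is $\mf{U}_i[1]\{1\}$ and the one attached to $(L_i\o_\Bbbk{_iL})[-2]\{-2\}$ is $\mf{U}_i[-1]\{-1\}$. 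On $K_0$ the grading shift $\{1\}$ acts by multiplication by $q$ (the $q$ of $K_0(H\udmod)\cong\mathbb{O}_p$), while the homological shift $[1]$ acts by $-1$ since $[M[1]]=-[M]$ in any triangulated category; hence $[\mf{T}_i]=[\Id]+q[\mf{U}_i]$ and $[\mf{T}_i']=[\Id]+q^{-1}[\mf{U}_i]$.

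To recast these in the advertised shape, use $q^{2p}\equiv1$ in $\mathbb{O}_p$; concretely, on the $\mathcal{O}_{2p}$-structure of Remark \ref{rmk-hermitian-structure} (base change $\mathbb{O}_p\twoheadrightarrow\mathcal{O}_{2p}$, $q\mapsto\zeta_{2p}$) one has $q^p=-1$, hence $q=-q^{p+1}$ and $q^{-1}=-q^{p-1}$, which yields $[\mf{T}_i]=[\Id]-q^{p+1}[\mf{U}_i]$ and $[\mf{T}_i']=[\Id]-q^{p-1}[\mf{U}_i]$. As a self-contained cross-check one can instead evaluate on the basis $\{[L_1],\dots,[L_n]\}$: Lemma \ref{lemma-ny-resolution} together with Lemma \ref{lemma-tensor-simples} computes $[\mf{U}_i](L_j)$, and hence $[\mf{T}_i](L_j)$ and $[\mf{T}_i'](L_j)$, in the three cases $j=i$, $|i-j|=1$, $|i-j|>1$; these reproduce \eqref{twistingdualcan1} and \eqref{twistingdualcan2} term by term. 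The one genuinely delicate point throughout is the shift bookkeeping — keeping the homological shifts $[\pm1]$ separate from the internal shifts $\{\pm1\}$, and recalling that they interact only through $M[2]\cong M\{-2p\}$ of \eqref{eqn-hom-shift-two-equals-grading-shift}; with that under control, the triangle relations and the identification of the bimodule behind $\mf{U}_i$ make the rest immediate.
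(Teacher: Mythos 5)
Your computation is essentially the paper's own proof in different clothing: the paper rewrites the defining cocone and cone as filtered objects with $p-1$ shifted copies of $\mf{U}_i[1]$ and resums the resulting geometric series using $1+q^2+\cdots+q^{2p-2}=0$, which is exactly the content of your triangle identities $[\mf{T}_i]=[\Id]-[(L_i\o{_iL})\o^{\mathbf{L}}_{A_n^!}(-)]$, $[\mf{T}_i']=[\Id]-[(L_i\o{_iL})[-2]\{-2\}\o^{\mathbf{L}}_{A_n^!}(-)]$ together with the identification of $L_i\o{_iL}$ with $\mf{U}_i[1]\{1\}$. Your intermediate answer $[\mf{T}_i]=[\Id]+q[\mf{U}_i]$, $[\mf{T}_i']=[\Id]+q^{-1}[\mf{U}_i]$ is the correct output of that computation under the paper's convention $[M[1]]=-[M]$, and, as your cross-check notes, it is the version that literally reproduces \eqref{twistingdualcan1}--\eqref{twistingdualcan2}. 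The one point to be careful about is your last step: $q=-q^{p+1}$ is equivalent to $q^p=-1$, which does \emph{not} hold in $\mathbb{O}_p=\Z[q]/(\Psi_p(q^2))$ for odd $p$ (under the quotient $\mathbb{O}_p\twoheadrightarrow\Z[\zeta_p]$, $q\mapsto\zeta_p$, one gets $q^p\mapsto 1$), and $q^{2p}=1$ alone does not imply it; it becomes true only after the base change $\mathbb{O}_p\twoheadrightarrow\mathcal{O}_{2p}$, $q\mapsto\zeta_{2p}$, of Remark \ref{rmk-hermitian-structure}. So, strictly over $\mathbb{O}_p$, your argument proves the displayed equalities only up to the unit $-q^p$ (equivalently, after the $\mathcal{O}_{2p}$ specialization). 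This is not a defect relative to the paper: the paper's own filtered reformulations of $\mf{T}_i$ and $\mf{T}_i'$, summed with $[M[1]]=-[M]$, likewise give $+q$ and $+q^{-1}$, so the advertised coefficients $-q^{p+1}$, $-q^{p-1}$ implicitly assign the homological shift the symbol $q^p$ (i.e.\ impose $q^p=-1$); you have correctly located where that identification enters, which the paper's two-line proof leaves unsaid.
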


\begin{proof}
By the definition of $ \mf{U}_i $, the definitions of $ \mf{T}_i $ and $ \mf{T}_i' $ may be reformulated as:
\begin{equation}
\label{defT_i&&}
\mf{T}_i \cong A_n^! \rightarrow \mf{U}_i[1] \{3 \} \rightarrow \mf{U}_i [1] \{5 \} \rightarrow \cdots \rightarrow \mf{U}_i [1] \{2p-1 \}
\end{equation}
\begin{equation}
\label{defT_i'&&}
\mf{T}_i' \cong \mf{U}_i [1] \{1 \} \rightarrow \cdots \mf{U}_i[1] \{2p-5 \} \rightarrow \mf{U}_i[1] \{2p-3 \} \rightarrow A_n^!.
\end{equation}
The proposition follows by recalling that $ 1+q^2+ \cdots + q^{2p-2}=0 $.
\end{proof}

\begin{prop}
The categorical braid group action descends to the Burau representation in the Grothendieck group.  More specifically, the following squares commute
\begin{equation*}
\xymatrix{
K_0(A_n^!) \ar[r]^{\cong \hspace{.1in}} \ar[d]^{[\mf{T}_i]} & V_1^{\otimes n}[n-2] \ar[d]^{t_i} \\
K_0(A_n^!) \ar[r]^{\cong \hspace{.1in}} & V_1^{\otimes n} [n-2]
}
\hspace{1in}
\xymatrix{
K_0(A_n^!) \ar[r]^{\cong \hspace{.1in}} \ar[d]^{[\mf{T}_i']} & V_1^{\otimes n}[n-2] \ar[d]^{t_i'} \\
K_0(A_n^!) \ar[r]^{\cong \hspace{.1in}} & V_1^{\otimes n} [n-2].
}
\end{equation*}
\end{prop}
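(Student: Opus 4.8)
The plan is to identify $K_0(A_n^!,\dif_\lambda)$ with the Burau module $V_1^{\otimes n}[n-2]$ as $\mathbb{O}_p$-modules by matching the simple basis with the basis $\{l_r\}$, and then to read off the action of the symbols $[\mf{T}_i]$, $[\mf{T}_i']$ on that basis from the previous proposition together with an explicit computation of $[\mf{U}_i]$. First I would note that, by Remark~\ref{rmk-hermitian-structure}, $K_0(A_n^!,\dif_\lambda)$ is a free $\mathbb{O}_p$-module of rank $n$ with basis $\{[L_1],\dots,[L_n]\}$; since $V_1^{\otimes n}[n-2]$ is free of rank $n$ over $\mathbb{O}_p$ with basis $\{l_1,\dots,l_n\}$, I define the comparison isomorphism by $[L_i]\mapsto l_i$. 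This is the identification dictated by the Webster-algebra picture of Section~\ref{zigzagalgebra}, under which simple modules correspond to the ``dual canonical''-type basis $\{l_r\}$. It then remains to check that $[\mf{T}_i]$ and $[\mf{T}_i']$ act on $\{[L_j]\}$ by the matrices \eqref{twistingdualcan1} and \eqref{twistingdualcan2}.

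The key computation is the action of $[\mf{U}_i]$ on the simple basis. From the definition $\mf{U}_i=\cup_i\circ\cap_i[-1]\{-1\}$ in \eqref{eqn-def-TL-generator} one has $\mf{U}_i(L_j)\cong L_i\o\bigl({}_iL\otimes^{\mathbf{L}}_{A_n^!}L_j\bigr)[-1]\{-1\}$, and Lemma~\ref{lemma-tensor-simples} (together with its evident analogue when $j=n$) identifies ${}_iL\otimes^{\mathbf{L}}_{A_n^!}L_j$ with $0$ for $|i-j|>1$, with $\widetilde{V}_0[1]\{1\}$ for $|i-j|=1$, and with $\widetilde{V}_0[2]\{2\}\oplus\widetilde{V}_0$ for $j=i$. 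Passing to $K_0$, using that $[M\{1\}]=q[M]$ and that the homological shift acts by $[M[1]]=-[M]$ (this last being forced by $M[2]\cong M\{-2p\}$, the identification $\widetilde{V}_{p-2}\{1-p\}\cong\widetilde{V}_0[1]\{1\}$ of Lemma~\ref{lemma-tensor-simples}, and the relation $1+q^2+\cdots+q^{2p-2}=0$), I obtain
\[
[\mf{U}_i]([L_j])=
\begin{cases}
-(q+q^{-1})[L_i], & j=i,\\
[L_i], & |i-j|=1,\\
0, & |i-j|>1.
\end{cases}
\]
In other words $[\mf{U}_i]$ acts as the $i$-th generator of the (type-$A$) Temperley--Lieb representation on $\bigoplus_j\mathbb{O}_p[L_j]$, consistently with Theorem~\ref{thm-TL-action}(i) and the circle-evaluation $-q-q^{-1}$.

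Finally I would substitute this into the previous proposition, $[\mf{T}_i]=[\Id]-q^{p+1}[\mf{U}_i]$ and $[\mf{T}_i']=[\Id]-q^{p-1}[\mf{U}_i]$, and simplify using the cyclotomic relation $1+q^2+\cdots+q^{2p-2}=0$ (hence $q^{2p}=1$, and $q^p=-1$ after the base change $\mathbb{O}_p\twoheadrightarrow\mathcal{O}_{2p}$, $q\mapsto\zeta_{2p}$, of Remark~\ref{rmk-hermitian-structure}). This yields $[\mf{T}_i]([L_i])=(1+q^p+q^{p+2})[L_i]=-q^2[L_i]$, $[\mf{T}_i]([L_j])=[L_j]-q^{p+1}[L_i]=[L_j]+q[L_i]$ for $|i-j|=1$, and $[\mf{T}_i]([L_j])=[L_j]$ for $|i-j|>1$ --- exactly the entries of \eqref{twistingdualcan1} --- and the same substitution for $[\mf{T}_i']$ reproduces \eqref{twistingdualcan2}. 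Since the comparison isomorphism intertwines these actions, both squares commute.

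The step I expect to be the main obstacle is precisely this last bookkeeping: one must track the grading shifts $\{\,\cdot\,\}$ and homological shifts $[\,\cdot\,]$ through $\cup_i$, $\cap_i$ and the cone/cocone defining $\mf{T}_i$, $\mf{T}_i'$, and verify that the cyclotomic relations assemble these contributions into exactly the scalars $-q^{\pm2}$ and $q^{\pm1}$ of \eqref{twistingdualcan1}--\eqref{twistingdualcan2} (and, relatedly, pin down the ground ring so that $q^p=-1$ is available). Everything else --- the existence of the rank-$n$ comparison isomorphism and the action of $[\mf{U}_i]$ --- is a direct consequence of results already in hand: the NY resolution (Lemma~\ref{lemma-ny-resolution}), Corollary~\ref{cor-RHOM-of-simples}, Lemma~\ref{lemma-tensor-simples}, and the preceding Grothendieck-group proposition.
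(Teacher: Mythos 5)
Your identification $[L_j]\leftrightarrow l_j$ and your computation of $[\mf{U}_i]$ on the simple basis are correct, and up to routing through $\mf{U}_i$ this is essentially the computation the paper performs (its proof applies $\mf{T}_i$, $\mf{T}_i'$ directly to the simples $L_j$ via Corollary \ref{cor-RHOM-of-simples}). The genuine problem is your last step, where you need $1+q^p+q^{p+2}=-q^2$, i.e.\ $q^p=-1$. This is false in $\mathbb{O}_p$ for $p>2$: since $\Psi_p(q^2)=\Phi_p(q)\Phi_{2p}(q)$, there is a ring quotient $\mathbb{O}_p\twoheadrightarrow \Z[\zeta_p]$, $q\mapsto\zeta_p$, under which $q^p\mapsto 1$ and $1+q^p+q^{p+2}+q^2=(1+q^2)(1+q^p)\mapsto 2(1+\zeta_p^2)\neq 0$. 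Hence invoking $q^p=-1$ through the base change to $\mathcal{O}_{2p}$ of Remark \ref{rmk-hermitian-structure} only proves that the squares commute after tensoring with $\mathcal{O}_{2p}$, which is strictly weaker than the assertion: both $K_0(A_n^!)$ and $V_1^{\otimes n}[n-2]$ are $\mathbb{O}_p$-modules and the horizontal isomorphisms are $\mathbb{O}_p$-linear.

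The discrepancy is inherited from the coefficients $-q^{p\pm1}$ in the proposition you quote, which agree with the correct ones only after specializing $q^p\mapsto -1$. With your own (correct) conventions $[M[1]]=-[M]$ and $[M\{1\}]=q[M]$, applied to the filtered presentations $\mf{T}_i\cong A_n^!\to \mf{U}_i[1]\{3\}\to\cdots\to\mf{U}_i[1]\{2p-1\}$ and $\mf{T}_i'\cong \mf{U}_i[1]\{1\}\to\cdots\to\mf{U}_i[1]\{2p-3\}\to A_n^!$ from the proof of that proposition, the identities $q^3+q^5+\cdots+q^{2p-1}=-q$ and $q+q^3+\cdots+q^{2p-3}=-q^{-1}$ in $\mathbb{O}_p$ give the specialization-free formulas $[\mf{T}_i]=[\Id]+q[\mf{U}_i]$ and $[\mf{T}_i']=[\Id]+q^{-1}[\mf{U}_i]$; combined with your formula for $[\mf{U}_i]$ these reproduce \eqref{twistingdualcan1} and \eqref{twistingdualcan2} over $\mathbb{O}_p$ exactly, e.g.\ $[\mf{T}_i][L_i]=\bigl(1-q(q+q^{-1})\bigr)[L_i]=-q^2[L_i]$. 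Alternatively, argue as the paper does: compute $[\mf{T}_i(L_j)]$ and $[\mf{T}_i'(L_j)]$ from the cone/cocone descriptions together with Corollary \ref{cor-RHOM-of-simples}; for instance $\mf{T}_i(L_i)$ is the cocone of $L_i\to \bigl(L_i\oplus L_i\{2-2p\}\bigr)[-1]$, whose class is $[L_i]-(1+q^{2-2p})[L_i]=-q^2[L_i]$ in $\mathbb{O}_p$, with no appeal to $q^p=-1$. Either repair closes the gap; as written, your argument proves the commutativity only modulo the kernel of $\mathbb{O}_p\to\mathcal{O}_{2p}$.
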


\begin{proof}
One applies the functors $ \mf{T}_i $ and $ \mf{T}_i'$ to simple objects $ L_j $ using Corollary ~\ref{cor-RHOM-of-simples}.  It is an easy exercise to check that, in the Grothendieck group, these computations match up with the formulas from
~\eqref{twistingdualcan1} and ~\eqref{twistingdualcan2}.
\end{proof}

\appendix

\section{Maps between simples}

For $ \lambda=1$ the unique non-trivial map (up to homotopy) $ \psi_{i+1} \colon L_{i+1}[1] \lbrace 2p-1 \rbrace \rightarrow L_i $ is easy to construct.
The map between their cofibrant resolutions
\begin{equation*}
\xymatrix@C=1em{
& & & & P_{i+2} \ar^{(i+2|i+1)}[drr] & & & & \\
P_{i+1} \{-2p+3 \} \ar@{=}[r] &\cdots \ar@{=}[r] & P_{i+1} \{-1 \} \ar^{-(i+1|i+2)}[rru] \ar_{(i+1|i)}[rrd] & & & & P_{i+1} \{1 \} \ar@{=}[r] & \cdots \ar@{=}[r]  & P_{i+1} \{2p-3 \} \\
& & & & P_{i}  \ar_{(i|i+1)} [urr] \ar_{_{(i|i+1|i+2)}}[uu] & & & &
}
\end{equation*}
and
\begin{equation*}
\xymatrix@C=0.6em{
& & P_{i-1}\{-(2p-3) \} \ar[dd]^{(i-1|i|i+1)}  \ar@{=}[r] &\cdots\ar@{=}[r]& P_{i-1} \{ -1 \} \ar[dd]^{(i-1|i|i+1)} \ar[drrrr]^{(i-1|i)} &  \\
P_i\{-(2p-2) \} \ar[urr]^{(i|i-1)} \ar[drr]_{-(i|i+1)} & & &   &  &  &  & &  P_i\\
& & P_{i+1} \{-(2p-3) \}  \ar@{=}[r]  & \cdots \ar@{=}[r]   &  P_{i+1} \{-1 \}  \ar[urrrr]_{(i+1|i)}  & &
}
\end{equation*}
respectively is given by the identity map $ P_{i+1} \{ -2p+3+2r \} \rightarrow P_{i+1} \{-2p+3+2r \} $ for $ r=0,\ldots,p-2 $ and is zero otherwise.

For $ \lambda=1$ the unique (up to homotopy) map $ \phi_i \colon L_i \rightarrow L_{i+1}[1] \{ 1 \} $ is more difficult to construct.
For $ p=2$ the map is given by:
\begin{equation}
\xymatrix@C=4em{
 & P_{i-1} \{ -1 \} \ar[dd]^{(i-1|i|i+1)} \ar[dr]^{(i-1|i)} &  \\
P_i\{-2 \} \ar[ur]^{(i|i-1)} \ar[dr]^{(i|i+1)} \ar[ddr]_{(i)} & &  P_i \ar[ddd]^{(i|i+1)} \ar@/^2.0pc/@[red][ldddd]|(.43){(i|i+1|i+2) }\\
& P_{i+1} \{-1 \} \ar[ur]^{(i+1|i)} \ar[rdd]|(.3){(i+1) }  \ar@/^2.0pc/@[red][ddd]|(.7){(i+1|i+2)}& & \\
 & P_{i} \{ -2 \} \ar[dd]_{(i|i+1|i+2)} \ar[dr]_(.5){(i|i+1)} &  \\
P_{i+1}\{-3 \} \ar[ur]_{(i+1|i)} \ar[dr]^{(i+1|i+2)} & &  P_{i+1} \{ -1 \} \\
& P_{i+2} \{-2 \} \ar[ur]_{(i+2|i+1)}   & & \\
}
\end{equation}

For a prime $ p>2$ there is a map $ \phi_i \colon L_i \rightarrow L_{i+1}[1] \{ 1 \} $
between the cofibrant resolution of $ L_i$
\begin{equation*}
\xymatrix@C=0.2em{
& & P_{i-1}\{-(2p-3) \} \ar[dd]^{(i-1|i|i+1)}  \ar@{=}[r] &\cdots\ar@{=}[r]& P_{i-1} \{ -1 \} \ar[dd]^{(i-1|i|i+1)} \ar[drrrr]^{(i-1|i)} &  \\
P_i\{-(2p-2) \} \ar[urr]^{(i|i-1)} \ar[drr]_-{-(i|i+1)} & & & &  &  & &  &  P_i\\
& & P_{i+1} \{-(2p-3) \}  \ar@{=}[r]  & \cdots \ar@{=}[r]  &  P_{i+1} \{-1 \}  \ar[urrrr]_{(i+1|i)}  & &
}
\end{equation*}
and the cofibrant resolution of $ L_{i+1}[1] \{ 1 \} $
\begin{equation*}
\xymatrix@C=0.2em{
& & & P_{i+2} \{ -2p+2 \} \ar^-{(i+2|i+1)}[dr] & & & \\
P_{i+1} \{-4p+5 \} \ar@{=}[r]  &\cdots \ar@{=}[r] & P_{i+1} \{-2p+1 \} \ar^-{-(i+1|i+2)}[ru] \ar_{(i+1|i)}[rd]  & & P_{i+1} \{-2p+3 \} \ar@{=}[r] & \cdots \ar@{=}[r]  & P_{i+1} \{-1\} \\
& & & P_{i} \{ -2p+2 \} \ar_{(i|i+1)} [ur] \ar^{(i|i+1|i+2)}[uu] & & & \\
}
\end{equation*}
given on components as follows:
\begin{itemize}
\item $ d_j \colon P_i \rightarrow P_{i+1} \{-2p+3+2j \} $ for $ j=0,\ldots,p-2$
\newline $ d_j = \cdot (-1)^{j+1} (p-j-2)!(i|i+1)(i+1|i|i+1)^{p-j-2} $
\item $ b \colon P_i \rightarrow P_{i+2} \{ -2p+2 \} $
\newline $ b = \cdot -(i|i+1|i+2)(i+2|i+1| i+2)^{p-2} $
\item $ p_{k,j} \colon P_{i+1} \{ -1-2k \} \rightarrow P_{i+1} \{ -2p+3+2j \} $ for $ 0 \leq j \leq p-3-k $
\newline $ p_{k,j} = \cdot (-1)^{k+j} \frac{(p-2-j)!}{(k+1)!} (i+1|i|i+1)^{p-j-k-2} $
\item $ p_{k,p-2-k} \colon P_{i+1} \{ -1-2k \} \rightarrow P_{i+1} \{ -2k-1 \} $ for $ 0 \leq k \leq p-2 $
\newline $ p_{k,p-2-k} = -1$
\item $ h_{k,j} \colon P_{i-1} \{-1-2k \} \rightarrow P_{i+1} \{ -2p+3+2j \} $ for $ 0 \leq j \leq p-3-k $
\newline $ h_{k,j} = \cdot \frac{(-1)^{k+1} k}{(k+1)!(j+1)!} (i-1|i|i+1)(i+1|i|i+1)^{p-j-k-3} $
\item $ m_k \colon P_{i+1} \{-1-2k \} \rightarrow P_{i+2} \{-2p+2 \}$ for $ 0 \leq k \leq p-3$
\newline $ m_k=\cdot \frac{(-1)^k}{(k+1)!} (i+1|i+2)(i+2|i+1|i+2)^{p-k-2} $
\item $ m_{p-2} \colon P_{i+1} \{-2p+3 \} \rightarrow P_{i+2} \{-2p+2 \} $
\newline $ m_{p-2} = -(i+1|i+2) $
\item $ n_{p-2} \colon P_{i+1} \{-2p+3 \} \rightarrow P_{i} \{-2p+2 \} $
\newline $ n_{p-2} = 2(i+1|i)$
\item $ f_k \colon P_{i-1} \{ -1-2k \} \rightarrow P_{i+2} \{-2p+2 \} $ for $ 1 \leq k \leq p-3 $
\newline $ f_k = \cdot \frac{(-1)^{k+1} k}{(k+1)!} (i-1|i|i+1|i+2)(i+2|i+1|i+2)^{p-k-3} $
\item $ g_{p-2} \colon P_{i-1} \{-2p+3\} \rightarrow P_i \{-2p+2\} $
\newline $ g_{p-2}= 2(i-1|i) $
\item $ \gamma \colon P_i \{-2p+2 \} \rightarrow P_i \{-2p+2 \} $
\newline $ \gamma=1 $.
\end{itemize}

\bibliographystyle{alpha}

%

\vspace{0.1in}

\noindent Y.~Q.: { \sl \small Department of Mathematics, University of California, Berkeley, Berkeley, CA 94720, USA} \newline \noindent {\tt \small email: yq2121@math.berkeley.edu}

\vspace{0.1in}

\noindent J.~S.: {\sl \small Department of Mathematics, CUNY Medgar Evers, Brooklyn, NY, 11225, USA}
\newline\noindent  {\tt \small email: joshuasussan@gmail.com}

%
\end{document}